\newtheorem{theorem}{Theorem}[section]
\newtheorem{conjecture}[theorem]{Conjecture}
\newtheorem{corollary}[theorem]{Corollary}
\newtheorem{definition}[theorem]{Definition}
\newtheorem{lemma}[theorem]{Lemma}
\newtheorem{proposition}[theorem]{Proposition}
\newtheorem{remark}[theorem]{Remark}
\def\J#1#2#3{ \left\{ #1,#2,#3 \right\} }
\def\11{\textbf{$1$}}
\begin{document}

\title[A Kadec-Pelczy{\'n}ski dichotomy-type theorem]{A Kadec-Pelczy{\'n}ski dichotomy-type theorem for preduals of JBW$^*$-algebras}

\author[Fern\'{a}ndez-Polo]{Francisco J. Fern\'{a}ndez-Polo}
\email{pacopolo@ugr.es}
\address{Departamento de An{\'a}lisis Matem{\'a}tico, Facultad de
Ciencias, Universidad de Granada, 18071 Granada, Spain.}

\author[Peralta]{Antonio M. Peralta}
\email{aperalta@ugr.es}
\address{Departamento de An{\'a}lisis Matem{\'a}tico, Facultad de
Ciencias, Universidad de Granada, 18071 Granada, Spain.}

\author[Ram\'{i}rez]{Mar{\'\i}a Isabel Ram{\'\i}rez}
\address{Departamento de Algebra y An\'alisis Matem\'atico, Universidad de
Almer\'ia, 04120 Almer\'ia, Spain} \email{mramirez@ual.es}

\thanks{First and second author partially supported by the Spanish Ministry of Economy and Competitiveness,
D.G.I. project no. MTM2011-23843, and Junta de Andaluc\'{\i}a grants FQM0199 and
FQM3737. Third author supported partially supported by the Spanish Ministry of Economy and Competitiveness project no. MTM2010-17687.}

\subjclass[2000]{Primary 46L70, 46L05; Secondary 17C65, 4610, 4635, 46L52, 46B20, 47B47, 46L57 }


\begin{abstract} We prove a Kadec-Pelczy{\'n}ski dichotomy-type theorem for bounded sequences in the predual of a JBW$^*$-algebra, showing that for each bounded sequence $(\phi_n)$ in the predual of a JBW$^*$-algebra $M$, there exist a subsequence $(\phi_{\tau(n)})$,
and a sequence of mutually orthogonal projections $(p_n)$ in $M$ such that: \begin{enumerate}[$(a)$]
\item the set $\left\{\phi_{\tau(n)} - \phi_{\tau(n)} P_{2} (p_n): n\in \mathbb{N}\right\}$ is relatively weakly compact,
\item $\phi_{\tau(n)}=\xi_n+\psi_n$, with $\xi_n := \phi_{\tau(n)} - \phi_{\tau(n)} P_{2} (p_n)$, and $\psi_n := \phi_{\tau(n)} P_{2} (p_n),$ {\rm(}$\xi_n Q(p_n)= 0$ and $\psi_n Q(p_n)^2 = \psi_n${\rm)}, for every $n$.
\end{enumerate}
\end{abstract}

\keywords{JBW$^*$-algebra, JBW$^*$-algebra predual, weak compactness, uniform integrability, Kadec-Pelczy{\'n}ski dichotomy theorem}

\maketitle
 \thispagestyle{empty}

\section{Introduction}

A celebrated and useful result of M.I. Kadec and A. Pelczy{\'n}ski \cite{KadPelcz} (cf. the splitting technique in \cite[page 97]{Die}) states that every bounded sequence $(f_n)$ in $L_p [0,1]$  $(1\leq p<\infty)$ admits a subsequence $(f_{n_{k}})$ which can be written as $g_k +h_k$, where the $h_k$'s have pairwise disjoint supports and the set $\{ g_k : k\in \mathbb{N}\}$ is an equi-integrable or relatively weakly compact subset in $L_p[0,1]$. This result, nowadays called ``subsequence splitting property'' or ``Kadec-Pelczy{\'n}ski dichotomy theorem'', doesn't hold for Banach function spaces in general (examples include $c_0$, reflexive, p-convex Banach lattices \cite{FigGhouJohn}). However, the number of subsequent contributions motivated by the above result is enormous; trying to refer to all the very large number of publications and authors involved is almost impossible.\smallskip

For our purposes, we focus on a generalisation of the subsequence splitting property for preduals of (non-necessarily commutative) von Neumann algebras. When the space $L_1 [0,1]$ is regarded as the predual of the abelian von Neumann algebra $L_{\infty}[0,1]$, we are naturally led to the following question: Does the Kadec-Pelczy{\'n}ski dichotomy theorem hold for preduals of (non- necessarily commutative) von Neumann algebras? An affirmative answer to this question was given by N. Randrianantoanina in \cite{Randri} and by Y. Raynaud and Q. Xu in \cite{RaynaudXu}.\smallskip

A Kadec-Pelczy{\'n}ski dichotomy-type problem also makes sense in the wider non-associative context of JB$^*$-algebras. Let us recall that an algebra $J$ with a commutative product (written $a\circ b$) is
called a \emph{Jordan algebra} if the identity $a\circ (b \circ a^2) = (a\circ b)\circ a^2,$ holds for every $a,b$ in $J$. A JB$^*$-algebra is a complex Banach space $J$ which is a complex Jordan algebra, with product $\circ$,
equipped with an involution $^*$ satisfying $$\|a\circ b\|\leq \| a\| \ \|b\|, \ \|a^*\| = \|a\| \hbox{ and }\|\J aaa\| =\|a\|^3,$$ for every $a,b\in J$, where $\J aaa = 2 (a\circ a^*) \circ a - a^2 \circ a^*$. Given a JB$^*$-algebra $J$, the set $J_{sa}$ of its self-adjoint elements is a JB-algebra in the sense of \cite{HancheStor}. Conversely, every JB-algebra coincides with the self adjoint part of a JB$^*$-algebra (cf. \cite{Wri77}). A JBW$^*$-algebra is a JB$^*$-algebra $M$ which is also a dual Banach space. The bidual of every JB$^*$-algebra is a JBW$^*$-algebra. It is known that every JBW$^*$-algebra $M$ has a unique (isometric) predual (denoted by $M_*$) and the product of $M$ is separately weak$^*$-continuous (see \cite[\S 4]{HancheStor}). Elements in $M_*$ are called normal functionals.\smallskip

Every C$^*$-algebra (respectively, every von Neumann algebra) is a JB$^*$-algebra (respectively, a JBW$^*$-algebra) with respect to its (usual) Jordan product $a\circ b := \frac 12 (a b +ba)$and its natural involution. A JC$^*$-algebra is any norm-closed Jordan $^*$-subalgebra of a C$^*$-algebra, while a JW$^*$-algebra is a weak$^*$-closed Jordan $^*$-subalgebra of a von Neumann algebra.\smallskip

We devote this paper to establish a Kadec-Pelczy{\'n}ski dichotomy-type theorem for bounded sequences in the predual of a JBW$^*$-algebra (see Theorem \ref{t KP dichotomy for general sequences 1} and Corollary \ref{c KP dichotomy orthogonal for general sequences}). We concretely prove that for each bounded sequence $(\phi_n)$ in the predual of a JBW$^*$-algebra $M$, there exist a subsequence $(\phi_{\tau(n)})$,
and a sequence of mutually orthogonal projections $(p_n)$ in $M$ such that: \begin{enumerate}[$(a)$]
\item the set $\left\{\phi_{\tau(n)} - \phi_{\tau(n)} P_{2} (p_n): n\in \mathbb{N}\right\}$ is relatively weakly compact,
\item $\phi_{\tau(n)}=\xi_n+\psi_n$, with $\xi_n := \phi_{\tau(n)} - \phi_{\tau(n)} P_{2} (p_n)$, and $\psi_n := \phi_{\tau(n)} P_{2} (p_n),$ {\rm(}$\xi_n Q(p_n)= 0$ and $\psi_n Q(p_n)^2 = \psi_n${\rm)}, for every $n$.
\end{enumerate}

In Section 2  we revisit some characterisations of relatively weakly compact subsets in the predual $M_*$ of a JBW$^*$-algebra $M$, connecting this definition with the notion of uniformly integrable subsets in $M_*.$

\subsection{Background and notation}

For each element $a$ in a JB$^*$-algebra $J,$ the symbol $M_a$ will denote the multiplication operator on $J$ defined by $M_a (c) := a \circ c $ ($c\in J$). Given another element $c$ in $J$, the conjugate-linear mapping $Q(a,b):J \to J$ is given by $Q(a,b) (c) := \J acb,$ where the triple product of three elements $a,b,c$ in $J$ is defined by $\J acb= (a\circ c^*) \circ b + (b\circ c^*)\circ a - (a\circ b)\circ c^*.$ We write $Q(a)$ for $Q(a,a)$. Clearly, $Q(a,b) = Q(b,a),$ for every $a,b\in J.$ It is known that the identity
\begin{equation}\label{basic equation}
Q(a) Q(b) Q(a) = Q(Q(a)b), \end{equation}
holds for every $a,b\in J.$ The mapping $L(a,b) : J \to J$ is defined by $L(a,b) (c) := \J abc$ ($c\in J$).\smallskip

Given a Banach space $X$, the symbol $B(X)$ will stand for its closed unit ball.\smallskip

By a projection $p$ in a JBW$^*$-algebra $M$ we mean a self-adjoint idempotent element in $M$ (i.e. $p=p^*= p\circ p$). The set of all projections in $M$ will be denoted by Proj$(M)$. Each projection $p$ in $M$ induces a decomposition of $M$ $$M= M_{2} (p) \oplus M_{1} (p) \oplus M_0 (p),$$ where for $j=0,1,2,$ $M_j (p)$
is the $\frac{j}{2}$-eigenspace of the mapping $L(p,p)$. This is called the \emph{Peirce decomposition} of $M$ with respect to $p$ (\cite[\S 2.6]{HancheStor}). The following multiplication rules are satisfied:
$$\{M_{i_{}}(p),M_{j_{}}(p),M_{k_{}}(p)\}\subseteq
M_{i-j+k}(p),$$ where $i,j,k=0,1,2$ and $M_{l_{}}(p)=0$ for $l\neq 0,1,2,$
$$\{M_0(p),M_2(p),M\}=\{M_2(p),M_0(p),M\}=0.$$

We recall that a normal functional $\varphi$ in the predual of a JBW$^*$-algebra $M$ is called positive when $\varphi (a) \geq 0$ for every $a\geq 0$ in $M$. It is know that a normal functional $\varphi$ is positive if and only if $\varphi (1) = \|\varphi\|.$ A normal state is a norm-one positive normal functional.\smallskip

The strong$^*$-topology (denoted by $S^*(M,M_*$) of a JBW$^*$-algebra $M$ is the topology on $M$ generated by
the family of seminorms of the form $$x\mapsto \|x\|_{\varphi}:=\sqrt{\varphi (x \circ
x^*)},$$ where $\varphi$ is any positive normal functional in $M_{*}$
\cite[\S 4]{PeRo}. Consequently, when a von Neumann algebra $W$ is regarded as a complex JBW$^*$-algebra, $S^{*} (W,W_{*})$ coincides with the familiar strong*-topology of $W$ (compare \cite[Definition 1.8.7]{Sak}). It is known that the strong*-topology of $M$ is compatible with the duality $(M,M_{*})$. Moreover, the triple product of $M$ is
jointly $S^{*} (M,M_{*})$-continuous on bounded subsets of $M$ (cf. \cite[\S 4]{PeRo}).

\section{Preliminaries: Weak Compactness and Uniform Integrability}

In this section we review some basic characterisations of relatively weakly compact subsets in the predual, $M_*$, of a JBW$^*$-algebra $M$,
relating the latter notion with the concept of (countably) uniformly integrable subsets in $M$.
The following definition is motivated by the corresponding notion introduced by N. Randrianantoanina \cite{Randri}
(see also Y. Raynaud and Q. Xu \cite{RaynaudXu}) in the setting of von Neumann algebras.

\begin{definition}\label{def count uniformly int} A bounded subset, $K,$ in the predual, $M_*$, of a JBW$^*$-algebra $M$ is said to be (countably) uniformly integrable if for every strong$^*$-null, bounded sequence
$(a_n)$ in $M$ we have $$\lim_{n\to \infty} \sup \left\{ \| \varphi \ Q({a_n})\| : \varphi\in K\right\}=0.$$
\end{definition}


When $M$ is a von Neumann algebra, a bounded subset $K\subseteq M_*$ is (countably) uniformly integrable if, and only if, $K$ is relatively weakly compact (cf. \cite[Page 140]{Randri} and \cite[Lemma 4.3 and Proposition 4.4 or Corollary 4.9]{RaynaudXu}). In the setting of preduals of JBW$^*$-algebras the above equivalence has not been established. We devote this section to explore the connections appearing in this case.\smallskip

M. Takesaki \cite{Tak0}, C.A. Akemann \cite{Ak}, C.A. Akemann, P.G. Dodds, J.L.B. Gamlen \cite{AkDoGam} and K. Sait{\^o} \cite{Sa} provided several useful characterisations of relatively weakly compact subsets of
preduals of von Neumann algebras, characterisations which are exploited in \cite{Randri} and \cite{RaynaudXu}.
In the setting of preduals of JBW$^*$-algebras, we shall require the following characterisation which was established in \cite{Pe}.

\begin{theorem}\label{th weak compact char alg}\cite[Theorem 1.5]{Pe}
Let $K$ be a bounded subset in the predual, $M_*$, of a JBW$^*$-algebra $M$.
The following assertions are equivalent:\begin{enumerate}[$(i)$]
\item $K$ is relatively weakly compact; \item The
restriction $K|_{C}$ of $K$ to each maximal associative subalgebra
$C$ of $M$ is relatively $\sigma (C_{*},C)$-compact; \item
There exists a (positive) normal state $\psi\in M_{*}$ having the following
property: given $\varepsilon >0$ there exists $\delta >
0$ such that for every $x\in W$ with $\|x\|\leq 1$ and
$\|x\|_{\psi} < \delta,$ then $| \phi (x)| < \varepsilon$ for each
$\phi\in K;$ \item For any monotone decreasing sequence of
projections $(p_n)$ in $M$ with $(p_n) \to 0$ in the
weak*-topology, we have $\lim_{n\to +\infty} \phi (p_n) =0,$
uniformly for $\phi\in K$. $\hfill \Box$
\end{enumerate}
\end{theorem}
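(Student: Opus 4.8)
The plan is to establish all four equivalences by the cycle $(i)\Rightarrow(ii)\Rightarrow(iv)\Rightarrow(i)$, proving in addition $(i)\Rightarrow(iii)$ and the easy $(iii)\Rightarrow(iv)$, so that $(iii)$ is sandwiched between $(i)$ and $(iv)$. The implication $(i)\Rightarrow(ii)$ is immediate: for each maximal associative subalgebra $C$ of $M$ the inclusion $C\hookrightarrow M$ is a weak$^*$-continuous isometry, so the restriction map $\phi\mapsto\phi|_{C}$ from $M_*$ into $C_*$ is a bounded linear operator, hence weak-to-weak continuous; since bounded linear images of relatively weakly compact sets are relatively weakly compact, $K|_{C}$ is relatively $\sigma(C_*,C)$-compact.

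First I would treat $(ii)\Rightarrow(iv)$ by a commutative reduction. Given a monotone decreasing sequence of projections $(p_n)$ with $p_n\to 0$ weak$^*$, the $p_n$ pairwise operator commute and thus generate an associative subalgebra; enlarging it, we may assume all $p_n$ lie in a single maximal associative subalgebra $C$, which is an abelian JBW$^*$-algebra isometric to some $L_\infty(\Omega,\mu)$ with $C_*\cong L_1(\Omega,\mu)$. By $(ii)$ the set $K|_{C}$ is relatively weakly compact in $L_1(\mu)$, hence uniformly integrable by the Dunford--Pettis theorem. Writing $p_n$ as a characteristic function $\chi_{E_n}$ with $(E_n)$ decreasing, the condition $p_n\to 0$ weak$^*$ forces $\chi_{E_n}\to 0$ almost everywhere on the $\sigma$-finite part carrying $K|_{C}$, and uniform integrability then yields $\sup_{\phi\in K}|\phi(p_n)|\to 0$, which is $(iv)$.

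The implication $(iii)\Rightarrow(iv)$ rests on the single identity $\|p_n\|_\psi^{2}=\psi(p_n\circ p_n^{*})=\psi(p_n)$, valid because each $p_n$ is a self-adjoint projection; as $\psi$ is normal and $p_n\downarrow 0$ weak$^*$ we get $\psi(p_n)\to 0$, hence $\|p_n\|_\psi\to 0$, and the $\varepsilon$--$\delta$ property in $(iii)$ then gives $\sup_{\phi\in K}|\phi(p_n)|\to 0$. For $(i)\Rightarrow(iii)$ I would exploit that $M_*$ is an $L$-embedded space (an $L$-summand in its bidual): from relative weak compactness of $K$ one extracts a countable family whose absolute values $|\phi_n|$ control $K$, sets $\psi:=\sum_n 2^{-n}\psi_n$ for suitable normal states $\psi_n$, and uses relative weak compactness to show that the tails are uniformly $\|\cdot\|_\psi$-small, which is precisely the uniform absolute-continuity condition $(iii)$.

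The main obstacle is the remaining direction $(iv)\Rightarrow(i)$, the Akemann-type converse. I would argue by contradiction: if $K$ were not relatively weakly compact then, by the Eberlein--\v{S}mulian theorem, some sequence in $K$ would have no weakly convergent subsequence, and the aim is to manufacture from it a decreasing sequence of projections $q_n\downarrow 0$ on which the functionals stay uniformly bounded away from $0$, contradicting $(iv)$. The hard part is that the disjointification/gliding-hump construction producing such projections is classical only in the commutative $L_1$ picture; in the Jordan framework it must be carried out through the Peirce calculus relative to a sequence of projections together with the joint strong$^*$-continuity of the triple product on bounded sets, which substitutes for the missing associativity and is where the real work concentrates.
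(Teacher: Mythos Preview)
The paper does not prove this theorem at all: it is quoted verbatim from \cite[Theorem~1.5]{Pe} and closed with a $\Box$ inside the statement itself, so there is no proof in the present paper to compare against. Your proposal is an attempt at an independent proof of a result the authors simply import.

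As to the content of your sketch: the easy implications $(i)\Rightarrow(ii)$ and $(iii)\Rightarrow(iv)$ are correct as written, and your commutative reduction for $(ii)\Rightarrow(iv)$ is sound in outline (comparable projections operator commute, maximal associative subalgebras of a JBW$^*$-algebra are weak$^*$-closed, so the Dunford--Pettis argument in $L_1(\mu)$ applies). However, the two implications you yourself flag as the real work remain genuine gaps rather than proofs. For $(i)\Rightarrow(iii)$ the phrase ``extract a countable family whose absolute values control $K$'' is not an argument: relative weak compactness does not by itself hand you countably many functionals dominating $K$, and producing the single controlling state $\psi$ in the Jordan setting is a non-trivial step (in \cite{Pe} it passes through the JBW$^*$-triple machinery). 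For $(iv)\Rightarrow(i)$ you only announce an intention to run a gliding-hump construction; manufacturing the decreasing sequence $q_n\downarrow 0$ with $\sup_{\phi\in K}|\phi(q_n)|$ bounded away from zero is exactly the substance of the theorem in the non-associative case, and nothing in your outline indicates how the Peirce calculus concretely replaces the classical support-splitting. So your proposal correctly identifies the architecture of the proof and handles the soft directions, but the two hard implications are not established.
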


It is known that, for each element $a$ in a von Neumann algebra $A$, the mapping $Q({a,a^*}) (x) :=\frac12 ( a x a^* + a^* x a)$
is a positive operator. Thus, given $0\leq z$ in $A$, we have $0\leq Q({a,a^*}) (z) \leq \|z\| Q({a,a^*}) (1)$.\smallskip

We shall need the following variant of \cite[Proposition 1.8.12]{Sak}.
We firstly recall that the product of a von Neumann algebra is jointly strong$^*$-continuous on bounded sets
(cf. \cite[Proposition 1.8.12]{Sak}).

\begin{lemma}\label{l 1 von Neumann alg unif conv of Ua}
Let $(a_{n})$ be a bounded, strong$^*$-null sequence in a von Neumann algebra $A$.
Then $$\hbox{\rm strong$^*$-}\lim_{n} Q({a_{n}}) (x) = 0,$$ uniformly on $B(A)$.
\end{lemma}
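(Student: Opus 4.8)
The plan is to reduce the assertion to a seminorm-wise estimate and then exploit the order-preserving character of the two-sided multiplications $w\mapsto a_n w a_n^*$ and $w\mapsto a_n^* w a_n$. Recall that $S^*(A,A_*)$ is generated by the seminorms $z\mapsto \|z\|_{\varphi}=\sqrt{\varphi(z\circ z^*)}$, with $\varphi$ running over the positive normal functionals; hence it suffices to fix one such $\varphi$ and prove that $\sup_{x\in B(A)}\|Q(a_n)(x)\|_{\varphi}\to 0$. In the concrete von Neumann algebra the triple product reads $\{a,c,b\}=\tfrac12(ac^*b+bc^*a)$, so that $Q(a_n)(x)=\{a_n,x,a_n\}=a_n x^* a_n$; put $M:=\sup_n\|a_n\|<\infty$.

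The heart of the argument is the single inequality
\[
\|Q(a_n)(x)\|_{\varphi}\ \le\ M\,\|a_n\|_{\varphi}\qquad (x\in B(A)).
\]
To obtain it, write $z:=Q(a_n)(x)=a_n x^* a_n$, so that $\|z\|_{\varphi}^2=\varphi(z\circ z^*)=\tfrac12\bigl(\varphi(zz^*)+\varphi(z^*z)\bigr)$. For the first summand note $zz^*=a_n\,(x^* a_n a_n^* x)\,a_n^*$; since $a_n a_n^*\le\|a_n\|^2\,1\le M^2\,1$ and both $w\mapsto x^* w x$ and $w\mapsto a_n w a_n^*$ are positive (order preserving) with $\|x\|\le 1$, we get $x^* a_n a_n^* x\le M^2\,1$ and then $zz^*\le M^2\,a_n a_n^*$; applying the positive functional $\varphi$ yields $\varphi(zz^*)\le M^2\,\varphi(a_n a_n^*)$. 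The symmetric computation, now using $a_n^* a_n\le M^2\,1$ and the positive map $w\mapsto a_n^* w a_n$, gives $\varphi(z^*z)\le M^2\,\varphi(a_n^* a_n)$. Adding the two bounds produces $\|z\|_{\varphi}^2\le M^2\,\varphi(a_n\circ a_n^*)=M^2\,\|a_n\|_{\varphi}^2$, which is the claimed inequality; crucially the bound does not depend on $x$. (These are precisely the one-sided halves of the positivity of $Q(a_n,a_n^*)$ recorded before the statement.)

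Finally, since $(a_n)$ is strong$^*$-null we have $\|a_n\|_{\varphi}\to 0$, whence $\sup_{x\in B(A)}\|Q(a_n)(x)\|_{\varphi}\le M\,\|a_n\|_{\varphi}\to 0$. As $\varphi$ ranged over an arbitrary positive normal functional, this is exactly $S^*(A,A_*)$-$\lim_n Q(a_n)(x)=0$ uniformly on $B(A)$. There is no genuinely hard step here; the only point requiring care is the bookkeeping of the involution in $Q(a_n)(x)=a_n x^* a_n$ together with the observation that sandwiching by $a_n$ on both sides is what allows one to extract the constant $M^2$ and reduce, uniformly in $x$, to the quantities $\varphi(a_n a_n^*)$ and $\varphi(a_n^* a_n)$ that strong$^*$-nullity controls.
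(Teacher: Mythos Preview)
Your argument is correct and is essentially the same as the paper's: both fix a positive normal functional $\varphi$, compute $\|Q(a_n)(x)\|_\varphi^2=\tfrac12\bigl(\varphi(zz^*)+\varphi(z^*z)\bigr)$ for $z=a_nx^*a_n$, and use positivity of the sandwich maps $w\mapsto a_nwa_n^*$ and $w\mapsto a_n^*wa_n$ to reduce each summand to $M^2\varphi(a_na_n^*)$ and $M^2\varphi(a_n^*a_n)$, yielding $\|Q(a_n)(x)\|_\varphi\le M\|a_n\|_\varphi$ uniformly in $x\in B(A)$. The only cosmetic difference is that the paper normalises to $M=1$ at the outset and bounds the inner factor directly by its norm, whereas you carry the constant $M$ and apply two sandwich steps; the content is identical.
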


\begin{proof}
We may assume, without loss of generality, that $a_{n}\in B(A)$ for every $n$.
Fix an arbitrary element $x$ in $B(A)$ and a normal state $\varphi\in A_*$. The inequalities
$$\left\| Q({a_{n}}) (x) \right\|_{\varphi}^2 = \frac12 \varphi \left( a_{n} x a_{n} a_{n}^* x^* a_{n}^* +
a_{n}^* x^* a_{n}^* a_{n} x a_{n}\right)$$
$$\leq  \frac12 \left( \| x a_{n} a_{n}^* x^*\| \varphi (a_{n} a_{n}^*)  +
 \|x^* a_{n}^* a_{n} x\| \varphi( a_{n}^* a_{n})\right) $$
$$\leq \frac 12  \varphi(a_{n} a_{n}^*+ a_{n}^* a_{n})= \|a_{n}\|_{\varphi}^2,$$ imply the desired statement.
\end{proof}

The following fact, proved by L. Bunce in \cite{Bun01}, will be used throughout the paper: Let $J$ be a JBW$^*$-subalgebra (i.e. a weak$^*$-closed
Jordan $^*$-subalgebra) of a JBW$^*$-algebra $M$, then the strong$^*$-topology of $J$ coincides with the restriction to $J$ of the strong$^*$-topology of $M$,
that is, $S^*(J,J_*) = S^* (M,M_*)|_{J}$. We shall actually require a weaker version of this result asserting that $S^*(J,J_*)$  and $S^* (M,M_*)|_{J}$ coincide on bounded subsets of $J$ (compare \cite[\S 4]{PeRo}).\smallskip

Since every JW$^*$-algebra is a weak$^*$-closed Jordan $^*$-subalgebra of a von Neumann algebra,
the following result is a direct consequence of Lemma \ref{l 1 von Neumann alg unif conv of Ua} and the above fact.

\begin{corollary}\label{c 1 JW alg unif conv of Ua}
Let $(a_{n})$ be a bounded, strong$^*$-null sequence in a JW$^*$-algebra $M$.
Then $$\hbox{\rm strong$^*$-}\lim_{n} Q({a_{n}}) (x) = 0,$$ uniformly on $B(M)$.$\hfill\Box$
\end{corollary}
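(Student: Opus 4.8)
The plan is to realise $M$ as a weak$^*$-closed Jordan $^*$-subalgebra of a von Neumann algebra $A$ and to transport the statement of Lemma \ref{l 1 von Neumann alg unif conv of Ua} from $A$ to $M$, using the coincidence of the strong$^*$-topologies on bounded sets in both directions. First I would record two structural remarks. Since the triple product $\{a,c,b\}$ is defined purely in terms of the Jordan product and the involution, its value for elements of $M$ agrees with the one computed in $A$; in particular $Q(a_n)(x)\in M$ for every $x\in M$, and in $A$ it reduces to the usual map $x\mapsto a_n\, x^*\, a_n$. I would also note that $B(M)=M\cap B(A)\subseteq B(A)$, so suprema over $B(A)$ dominate suprema over $B(M)$.

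The transfer of the hypothesis is the easy half. The sequence $(a_n)$ is bounded, hence contained in some ball of $M$ on which, by the quoted result of Bunce, $S^*(M,M_*)$ and $S^*(A,A_*)|_M$ coincide; since $(a_n)$ is strong$^*$-null in $M$, it is therefore strong$^*$-null when viewed in $A$. Lemma \ref{l 1 von Neumann alg unif conv of Ua} then applies and yields that $Q(a_n)(x)\to 0$ in $S^*(A,A_*)$ uniformly for $x\in B(A)$, and \emph{a fortiori} uniformly for $x\in B(M)$.

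It remains to push the conclusion back into $M$, and this is where the only genuine subtlety lies, because the desired statement is a uniform one whereas Bunce's fact is merely a coincidence of topologies. I would fix $R:=\sup_n\|a_n\|^2$, so that $Q(a_n)(x)\in R\,B(M)$ for all $n$ and all $x\in B(M)$, and work on the bounded set $R\,B(M)$, on which the two strong$^*$-topologies agree. Given a positive normal functional $\psi\in M_*$ and $\varepsilon>0$, the continuity at $0$ of the identity map $(R\,B(M),S^*(A,A_*))\to(R\,B(M),S^*(M,M_*))$ produces \emph{finitely many} positive normal functionals $\varphi_1,\dots,\varphi_m\in A_*$ and a $\delta>0$ such that every $y\in R\,B(M)$ with $\max_i\|y\|_{\varphi_i}<\delta$ satisfies $\|y\|_\psi<\varepsilon$.

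Finally, applying the conclusion of the second paragraph to each of the finitely many $\varphi_i$ gives an $N$ with $\sup_{x\in B(M)}\|Q(a_n)(x)\|_{\varphi_i}<\delta$ for all $i$ and all $n\ge N$; since each $Q(a_n)(x)$ lies in $R\,B(M)$, the implication just obtained forces $\sup_{x\in B(M)}\|Q(a_n)(x)\|_\psi\le\varepsilon$ for $n\ge N$. As $\psi$ and $\varepsilon$ were arbitrary, this is exactly the asserted uniform strong$^*$-convergence in $M$. The step I expect to require the most care is precisely this last transfer: one must exploit that on the bounded set $R\,B(M)$ a single seminorm $\|\cdot\|_\psi$ is controlled by finitely many ambient seminorms $\|\cdot\|_{\varphi_i}$, which is the operative content of the coincidence of the two topologies there, and which is what keeps the estimate uniform rather than merely pointwise.
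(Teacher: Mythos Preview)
Your proof is correct and follows exactly the route the paper indicates: embed $M$ as a weak$^*$-closed Jordan $^*$-subalgebra of a von Neumann algebra $A$, transfer the strong$^*$-null hypothesis to $A$ via Bunce's coincidence of topologies, apply Lemma~\ref{l 1 von Neumann alg unif conv of Ua}, and transfer the conclusion back. The paper records the corollary with a $\Box$ in the statement and no written proof, saying only that it is a ``direct consequence'' of Lemma~\ref{l 1 von Neumann alg unif conv of Ua} and the quoted fact from \cite{Bun01}; your third and fourth paragraphs make explicit precisely the point the paper suppresses, namely that coincidence of the two topologies on the bounded set $R\,B(M)$ lets you dominate a single $M_*$-seminorm by finitely many $A_*$-seminorms, so that the uniform estimate survives the transfer.
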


The proof of the next lemma is left to the reader.

\begin{lemma}\label{l direct sums of finite-dimensional} Let $(M_j)_{j\in \Lambda}$ be a family of JBW$^*$-algebras
satisfying the following property: for each bounded, strong$^*$-null sequence $(a_{n})$ in $M_j$ we have $$\hbox{\rm strong$^*$-}\lim_{n} Q({a_{n}}) (x) = 0,$$ uniformly on $B(M_j)$. Then the JBW$^*$-algebra $M= \bigoplus^{\ell_{\infty}}_{j\in \Lambda} M_j$ satisfies the same property, that is,
for each bounded, strong$^*$-null sequence $(a_{n})$ in $M$ we have $$\hbox{\rm strong$^*$-}\lim_{n} Q({a_{n}}) (x) = 0,$$ uniformly on $B(M)$.$\hfill\Box$
\end{lemma}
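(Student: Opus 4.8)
The plan is to reduce everything to a coordinatewise computation and then to control the resulting series by splitting it into a uniformly small tail and a finite sum that vanishes by hypothesis. Recall that in $M=\bigoplus^{\ell_{\infty}}_{j\in \Lambda} M_j$ the Jordan product, the involution and the triple product are all computed coordinatewise, so that $Q(a)(x)=\big(Q(a^{(j)})(x^{(j)})\big)_{j}$ whenever $a=(a^{(j)})_j$ and $x=(x^{(j)})_j$; moreover each positive normal functional $\varphi\in M_*$ decomposes as $\varphi=(\varphi_j)_j$ with each $\varphi_j$ a positive normal functional on $M_j$ and $\|\varphi\|=\sum_j\|\varphi_j\|<\infty$. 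After normalising we may assume $a_n\in B(M)$, so that $\|a_n^{(j)}\|\le 1$ for all $n,j$. It suffices to show, for each such $\varphi$, that $\sup_{x\in B(M)}\|Q(a_n)(x)\|_{\varphi}\to 0$. Since
\[
\|Q(a_n)(x)\|_{\varphi}^2=\sum_{j}\varphi_j\Big(Q(a_n^{(j)})(x^{(j)})\circ Q(a_n^{(j)})(x^{(j)})^*\Big)=\sum_j\big\|Q(a_n^{(j)})(x^{(j)})\big\|_{\varphi_j}^2,
\]
and each summand is at most $s_n^j:=\sup_{y\in B(M_j)}\|Q(a_n^{(j)})(y)\|_{\varphi_j}^2$, we have $\sup_{x\in B(M)}\|Q(a_n)(x)\|_{\varphi}^2\le \sum_j s_n^j$, and the problem reduces to proving that $\sum_j s_n^j\to 0$.

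For the tail I would use a uniform bound. From the defining inequalities of a JB$^*$-algebra one gets $\|Q(a)(y)\|\le 3\|a\|^2\|y\|$ (indeed $Q(a)(y)=2(a\circ y^*)\circ a-a^2\circ y^*$), whence $s_n^j\le \|\varphi_j\|\,\|Q(a_n^{(j)})(y)\|^2\le 9\,\|\varphi_j\|$ for every $n$. Given $\varepsilon>0$, the summability $\sum_j\|\varphi_j\|<\infty$ lets me pick a finite set $F\subseteq\Lambda$ with $\sum_{j\notin F}\|\varphi_j\|<\varepsilon$, so that $\sum_{j\notin F}s_n^j<9\varepsilon$ uniformly in $n$. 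For the finitely many $j\in F$ I invoke the hypothesis: extending any positive normal functional $\psi$ on $M_j$ by zero on the remaining summands produces a positive normal functional on $M$ with $\|a_n\|_{\psi}=\|a_n^{(j)}\|_{\psi}$, so $(a_n^{(j)})_n$ is a bounded strong$^*$-null sequence in $M_j$; the assumed property of $M_j$, applied with $\psi=\varphi_j$, then gives $s_n^j\to 0$ as $n\to\infty$. Since $F$ is finite, $\sum_{j\in F}s_n^j\to 0$, and combining the two estimates yields $\limsup_n\sum_j s_n^j\le 9\varepsilon$. Letting $\varepsilon\to 0$ gives $\sum_j s_n^j\to 0$, as required.

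The only genuinely delicate point is the interchange of the supremum over $B(M)$ with the infinite sum over $\Lambda$: the pointwise vanishing $s_n^j\to 0$ holds for each fixed $j$ by hypothesis, but this by itself does not control the full series uniformly in $n$. The domination $s_n^j\le 9\|\varphi_j\|$—available precisely because the triple product is norm-continuous and $\varphi$ is $\ell_1$-summable over the summands—is exactly what tames the tail and makes the finite/infinite splitting work; everything else is routine verification of the coordinatewise formulas.
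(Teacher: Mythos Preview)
Your argument is correct and is exactly the kind of routine verification the authors had in mind: the paper does not supply a proof of this lemma at all (it reads ``The proof of the next lemma is left to the reader''), so there is nothing to compare against. The only comments are cosmetic: the bound $\|Q(a)(y)\|\le 3\|a\|^2\|y\|$ you derive from the Jordan product expansion can in fact be sharpened to $\|Q(a)(y)\|\le\|a\|^2\|y\|$ using the JB$^*$-triple inequality, but your weaker estimate is perfectly sufficient; and your closing paragraph about the ``delicate point'' is really just a restatement of the finite/infinite splitting you already carried out, so it could be omitted.
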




It is well known that in a finite dimensional JBW$^*$-algebra the strong$^*$-topology, the weak, the weak$^*$ and the norm topologies coincide.\smallskip

By \cite[Theorem 7.2.7]{HancheStor} every JBW$^*$ algebra $M$ can be (uniquely)
decomposed as a direct $\ell_{\infty}$-sum $M = M_1 \bigoplus^{\ell_{\infty}} M_2$, where $M_1$ is a JW$^*$-algebra
and $M_2$ is a purely exceptional JBW$^*$-algebra. It is further known that $M_2$ embeds as a JBW$^*$-subalgebra of an $\ell_{\infty}$-sum of finite-dimensional exceptional JBW$^*$-algebras (compare \cite[Lemma 7.2.2 and Theorem 7.2.7]{HancheStor}). Combining these structure results and the preceding comments with Corollary \ref{c 1 JW alg unif conv of Ua} and Lemma \ref{l direct sums of finite-dimensional} we get:

\begin{corollary}\label{c 1 JBW alg unif conv of Ua}
Let $(a_{n})$ be a bounded, strong$^*$-null sequence in a JBW$^*$-algebra $M$.
Then $$\hbox{\rm strong$^*$-}\lim_{n} Q({a_{n}}) (x) = 0,$$ uniformly on $B(M)$.$\hfill\Box$
\end{corollary}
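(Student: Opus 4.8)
The plan is to assemble the corollary from the structural decomposition quoted just above, together with Corollary~\ref{c 1 JW alg unif conv of Ua} and Lemma~\ref{l direct sums of finite-dimensional}. Write $M = M_1 \bigoplus^{\ell_{\infty}} M_2$, with $M_1$ a JW$^*$-algebra and $M_2$ a purely exceptional JBW$^*$-algebra. By Lemma~\ref{l direct sums of finite-dimensional} it suffices to verify the asserted property separately for $M_1$ and for $M_2$, since then it is automatically inherited by their $\ell_{\infty}$-sum $M$.

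For $M_1$ there is nothing to do: being a JW$^*$-algebra, it satisfies the property by Corollary~\ref{c 1 JW alg unif conv of Ua}. The exceptional summand requires a short detour. First I would treat a single finite-dimensional exceptional JBW$^*$-algebra $N$: there the strong$^*$, weak$^*$, weak and norm topologies all coincide, so any bounded strong$^*$-null sequence $(a_n)$ is norm-null. Since the map $a \mapsto Q(a)$ is norm-continuous and vanishes at $0$, we get $\|Q(a_n)\| \to 0$, whence $Q(a_n)(x) \to 0$ in norm uniformly for $x\in B(N)$, and \emph{a fortiori} in the strong$^*$-topology. Thus every such $N$ has the property, and by Lemma~\ref{l direct sums of finite-dimensional} so does any $\ell_{\infty}$-sum $N := \bigoplus^{\ell_{\infty}}_{j} N_j$ of them.

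It then remains to pass from $N$ to its JBW$^*$-subalgebra $M_2$, and here two facts are used. The triple product, and hence the operator $Q$, is defined solely through the Jordan product and the involution, both of which are inherited by a Jordan $^*$-subalgebra; consequently $Q^{M_2}(a)(x) = Q^{N}(a)(x)$ for all $a,x \in M_2$. Moreover, by Bunce's theorem the strong$^*$-topology of $M_2$ agrees, on bounded sets, with the restriction to $M_2$ of the strong$^*$-topology of $N$. Given a bounded strong$^*$-null sequence $(a_n)$ in $M_2$, it is therefore strong$^*$-null in $N$; applying the property already established for $N$ and shrinking the supremum from $B(N)$ to the smaller ball $B(M_2)$, we obtain that $Q(a_n)(x) \to 0$ strong$^*$ in $N$, uniformly on $B(M_2)$. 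As the relevant elements remain bounded, the same convergence holds in the strong$^*$-topology of $M_2$, giving the property for $M_2$. A final application of Lemma~\ref{l direct sums of finite-dimensional} to $M = M_1 \bigoplus^{\ell_{\infty}} M_2$ completes the argument.

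The bookkeeping is routine; the one point demanding care, and where I expect the real work to sit, is the transfer across the embedding $M_2 \hookrightarrow N$. There one must simultaneously match the two $Q$-operators and invoke Bunce's coincidence of the strong$^*$-topologies on bounded sets, so that the uniform strong$^*$-convergence established in the ambient finite-dimensional $\ell_{\infty}$-sum genuinely descends to the (generally infinite-dimensional) subalgebra $M_2$.
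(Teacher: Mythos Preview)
Your proposal is correct and follows essentially the same route as the paper: decompose $M = M_1 \oplus^{\ell_\infty} M_2$, handle the JW$^*$-summand via Corollary~\ref{c 1 JW alg unif conv of Ua}, handle the exceptional summand by embedding it in an $\ell_\infty$-sum of finite-dimensional pieces (where the property is trivial) and transferring back via Bunce's coincidence of strong$^*$-topologies on bounded sets, and glue using Lemma~\ref{l direct sums of finite-dimensional}. The paper leaves this assembly implicit; you have spelled it out accurately, including the one genuinely delicate step of descending from the ambient $\ell_\infty$-sum to the JBW$^*$-subalgebra $M_2$.
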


We can prove now the equivalence between relatively weakly compact subsets and (countably) uniformly integrable subsets in the predual of a JBW$^*$-algebra.

\begin{proposition}
\label{p equiv rwc and uniform integrable} The following statements are equivalent for any bounded subset $K$ in the predual of a JBW$^*$-algebra $M$.
\begin{enumerate}[{\rm $(a)$}] 
\item $K$ is relatively weakly compact;
\item $K$ is (countably) uniformly integrable;
\item For each strong$^*$-null sequence $(p_{n}) \subset \hbox{Proj}(M)$, we have $$\exists \lim_{n\to \infty} \sup_{\varphi\in K} \| \varphi Q({p_{n}}) \| =0.$$
\item For each strong$^*$-null decreasing sequence $(p_{n}) \subset \hbox{Proj}(M)$, we have $$\exists \lim_{n\to \infty} \sup_{\varphi\in K} \| \varphi Q({p_{n}}) \| =0.$$
\end{enumerate}
\end{proposition}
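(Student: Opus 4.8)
The plan is to establish the cycle of implications $(a)\Rightarrow (b)\Rightarrow (c)\Rightarrow (d)\Rightarrow (a)$. The implications $(b)\Rightarrow (c)\Rightarrow (d)$ are immediate from the definitions: a strong$^*$-null sequence of projections is in particular a bounded strong$^*$-null sequence, and a decreasing such sequence is in particular a sequence of projections, so the defining condition of (countable) uniform integrability in Definition \ref{def count uniformly int} automatically restricts to these progressively smaller classes of test sequences. Thus only the two ``end'' implications carry content.

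For $(a)\Rightarrow (b)$ I would argue as follows. Let $(a_n)$ be a bounded strong$^*$-null sequence; since the relevant condition is homogeneous in the $a_n$, we may assume $a_n\in B(M)$. By Corollary \ref{c 1 JBW alg unif conv of Ua} the operators $Q(a_n)$ converge to $0$ in the strong$^*$-topology uniformly on $B(M)$, i.e. $\sup_{x\in B(M)}\|Q(a_n)(x)\|_\varphi\to 0$ for every positive normal functional $\varphi$. Now invoke the quantitative criterion $(iii)$ of Theorem \ref{th weak compact char alg}, which furnishes a distinguished normal state $\psi$: given $\varepsilon>0$ choose the corresponding $\delta>0$. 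Because the triple product is contractive we have $Q(a_n)(x)\in B(M)$ whenever $a_n,x\in B(M)$, so once $n$ is large enough that $\|Q(a_n)(x)\|_\psi<\delta$ for all $x\in B(M)$, criterion $(iii)$ yields $|\phi(Q(a_n)(x))|<\varepsilon$ for all $\phi\in K$ and all $x\in B(M)$. Taking the supremum first over $x\in B(M)$ and then over $\phi\in K$ gives $\sup_{\phi\in K}\|\phi\, Q(a_n)\|\le\varepsilon$ for all large $n$, which is precisely $(b)$.

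For $(d)\Rightarrow (a)$ I would feed the information back through criterion $(iv)$ of Theorem \ref{th weak compact char alg}. Two elementary observations do the work. First, for a projection $p$ one computes $Q(p)(1)=p$, so that $(\varphi\, Q(p))(1)=\varphi(p)$ and hence $|\varphi(p)|\le\|\varphi\, Q(p)\|$. Second, for a \emph{decreasing} sequence of projections $(p_n)$ the weak$^*$- and strong$^*$-convergence to $0$ coincide, since $\|p_n\|_\varphi^2=\varphi(p_n\circ p_n)=\varphi(p_n)$ for every positive normal functional $\varphi$. Consequently, if $(p_n)$ is a decreasing sequence of projections with $p_n\to 0$ weak$^*$, then it is strong$^*$-null, so hypothesis $(d)$ gives $\sup_{\varphi\in K}\|\varphi\, Q(p_n)\|\to 0$, and the displayed inequality forces $\sup_{\varphi\in K}|\varphi(p_n)|\to 0$. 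This is exactly condition $(iv)$, whence $K$ is relatively weakly compact.

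The delicate point is the forward implication $(a)\Rightarrow (b)$. A direct estimate of $\|\varphi\, Q(a_n)\|$ cannot be rendered uniform in $\varphi\in K$ by bare hands; what bridges the gap is the uniform strong$^*$-vanishing of $Q(a_n)$ supplied by Corollary \ref{c 1 JBW alg unif conv of Ua} — which itself rests on the structure theory splitting $M$ into a JW$^*$-part and a purely exceptional part handled through finite-dimensional pieces — combined with the \emph{single-state} quantitative criterion $(iii)$. By contrast, $(d)\Rightarrow (a)$ reduces to the harmless identity $Q(p)(1)=p$ and the coincidence of the weak$^*$- and strong$^*$-topologies on decreasing sequences of projections, so I expect it to be routine.
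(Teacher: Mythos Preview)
Your proof is correct and follows essentially the same route as the paper's. The paper likewise dispatches $(b)\Rightarrow(c)\Rightarrow(d)$ as immediate, derives $(a)\Rightarrow(b)$ by combining Corollary~\ref{c 1 JBW alg unif conv of Ua} with criterion $(iii)$ of Theorem~\ref{th weak compact char alg} exactly as you do, and obtains $(d)\Rightarrow(a)$ from the inequality $|\varphi(p)|\le\|\varphi\,Q(p)\|$ together with Theorem~\ref{th weak compact char alg}$(iv)$; you are merely more explicit than the paper in noting that weak$^*$- and strong$^*$-convergence to $0$ coincide for decreasing sequences of projections via $\|p_n\|_\varphi^2=\varphi(p_n)$.
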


\begin{proof}
We may assume that $K\subset B(M_*)$.
Since for each projection $p$ in $M$ and each $\varphi\in K$, $|\varphi(p)|= |\varphi(Q(p) p)| \leq \|\varphi Q(p)\|$, the implications
{\rm $(b)\Rightarrow (c)$} and {\rm $(c)\Rightarrow (d)$} are clear, while {\rm $(d)\Leftrightarrow (a)$} follows from Theorem \ref{th weak compact char alg}. \smallskip

{\rm $(a)\Rightarrow (b)$}.  By Theorem \ref{th weak compact char alg}~$(i)\Leftrightarrow (iii)$, there exists a normal state $\phi\in M_*$ satisfying that for each $\varepsilon>0$, there exists $\delta>0$ such that for every $x\in B(M)$ with $\|x\|_{\phi} <\delta$, $|\varphi (x) | < \varepsilon$, for every $\varphi \in K$. Let $(a_{n})$ be a strong$^*$-null sequence in $B(M)$. Corollary \ref{c 1 JBW alg unif conv of Ua} asserts that there exists $n_0$ such that for each $n\geq n_0$, $\| Q(a_{n}) (x)\|_{\phi} < \delta$, for every $x\in B(M)$. Therefore, for every $n\geq n_0$, $|\varphi (Q({a_{n}}) (x)) | < \varepsilon$, for all $\varphi \in K$, $x\in B(M)$, which implies, $\|\varphi Q({a_{n}}) \| \leq \varepsilon$, for every $\varphi \in K$.\end{proof}

\begin{lemma}\label{l 2.3} Let $K$ be a bounded subset in the predual of a JBW$^*$-algebra $M$.
Let $(p_{n})$ be a strong$^*$-null sequence in Proj$(M)$. Suppose that, for each $n_0$,
$P_1 (p_{n_0})^* (K)$ and $P_0 (p_{n_0})^* (K)$ are relatively weakly compact. Then $K$ is relatively weakly compact if, and only if,
$\displaystyle \exists \lim_{n} \sup_{\varphi\in K} \| \varphi Q({p_{n}})\| =0.$ 
\end{lemma}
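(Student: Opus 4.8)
The plan is to dispatch the two implications separately, observing that the direct implication is essentially free while the converse is where the hypotheses on the Peirce pieces enter. For the forward direction, if $K$ is relatively weakly compact then, since $(p_n)$ is strong$^*$-null in $\hbox{Proj}(M)$, the equivalence $(a)\Leftrightarrow(c)$ of Proposition~\ref{p equiv rwc and uniform integrable} gives at once $\lim_n \sup_{\varphi\in K}\|\varphi Q(p_n)\|=0$. This half uses neither the hypotheses on $P_1(p_{n_0})^*(K)$ and $P_0(p_{n_0})^*(K)$ nor any special structure of the sequence.

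For the converse I would argue by a Grothendieck-type uniform approximation. The key elementary observation is that for a projection $p$ one has $P_2(p)=Q(p)^2$ (the Peirce-two projection is the square of the quadratic representation) and $\|Q(p)\|\le 1$, so that
\[
\|\varphi P_2(p_n)\| = \|(\varphi Q(p_n))\,Q(p_n)\| \le \|\varphi Q(p_n)\|
\]
for every $\varphi\in M_*$ and every $n$. Hence the hypothesis $\lim_n \sup_{\varphi\in K}\|\varphi Q(p_n)\|=0$ forces $\sup_{\varphi\in K}\|\varphi P_2(p_n)\|\to 0$ as well. Writing the Peirce decomposition $\varphi = \varphi P_2(p_n)+\varphi P_1(p_n)+\varphi P_0(p_n)$, which makes sense because the Peirce projections are weak$^*$-continuous and so $P_j(p_n)^*$ maps $M_*$ into $M_*$, this says precisely that $K$ is uniformly well approximated in norm by the sets $\{\varphi P_1(p_n)+\varphi P_0(p_n):\varphi\in K\}$.

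It then remains to verify two soft points. First, for each fixed $n$ the set $\{\varphi P_1(p_n)+\varphi P_0(p_n):\varphi\in K\}$ is contained in $P_1(p_n)^*(K)+P_0(p_n)^*(K)$, which is relatively weakly compact: by hypothesis both summands are relatively weakly compact, and the sum of two relatively weakly compact sets is relatively weakly compact, being the image of the product of their weak closures under the weakly continuous addition map. Second, I would invoke the standard fact that a bounded set $K$ is relatively weakly compact as soon as, for every $\varepsilon>0$, there is a weakly compact set $W_\varepsilon\subseteq M_*$ with $K\subseteq W_\varepsilon+\varepsilon\, B(M_*)$; this is seen by passing to the bidual and noting that the $\sigma((M_*)^{**},(M_*)^{*})$-closure of $K$ then lies within distance $\varepsilon$ of $M_*$ for every $\varepsilon$, hence inside the norm-closed subspace $M_*$. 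Combining these: given $\varepsilon>0$, choose $n_0$ with $\sup_{\varphi\in K}\|\varphi Q(p_{n_0})\|<\varepsilon$; then $\|\varphi-(\varphi P_1(p_{n_0})+\varphi P_0(p_{n_0}))\|=\|\varphi P_2(p_{n_0})\|<\varepsilon$, so $K\subseteq W_\varepsilon+\varepsilon\, B(M_*)$ with $W_\varepsilon$ the weak closure of the relatively weakly compact set $P_1(p_{n_0})^*(K)+P_0(p_{n_0})^*(K)$, and the approximation criterion finishes the proof.

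I expect the only genuinely delicate point to be the algebraic and metric bookkeeping around the identity $P_2(p)=Q(p)^2$ together with the contractivity of $Q(p_n)$, that is, correctly transferring the control provided by $\|\varphi Q(p_n)\|$ onto the Peirce-two part $\varphi P_2(p_n)$, and a clean statement of the bidual approximation lemma. Everything else reduces to the weak$^*$-continuity of the Peirce projections and the stability of relative weak compactness under sums, which are routine.
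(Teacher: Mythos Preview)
Your proof is correct, and the forward implication is handled just as in the paper. For the converse, however, you take a genuinely different route from the paper's argument.

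The paper verifies relative weak compactness of $K$ by checking the projection criterion directly: given an arbitrary strong$^*$-null sequence of projections $(q_m)$, it splits $\varphi(q_m)=\sum_{i=0}^{2}\varphi P_i(p_{n_0})(q_m)$, bounds the $i=2$ term by $\|\varphi Q(p_{n_0})\|$ for a suitably chosen $n_0$, and then invokes the relative weak compactness of $P_1(p_{n_0})^*(K)$ and $P_0(p_{n_0})^*(K)$ (via Proposition~\ref{p equiv rwc and uniform integrable}) to make the $i=0,1$ terms small uniformly for large $m$. Thus the paper stays entirely inside the projection-testing characterisation of Theorem~\ref{th weak compact char alg}.

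You instead use the identity $P_2(p)=Q(p)^2$ and contractivity of $Q(p_n)$ to convert the hypothesis into a uniform norm approximation $\|\varphi-\varphi(P_1+P_0)(p_{n_0})\|<\varepsilon$, and then appeal to the general Banach-space fact that a bounded set lying within $\varepsilon$ of a weakly compact set for every $\varepsilon>0$ is relatively weakly compact. This is a cleaner, more abstract argument: it does not re-enter the JBW$^*$-specific characterisation at the final step, and it makes transparent that the result is really a perturbation statement. The paper's approach, on the other hand, is more self-contained within the machinery already set up and avoids the detour through the bidual. Both are short; yours trades one use of Proposition~\ref{p equiv rwc and uniform integrable} for the Grothendieck approximation lemma.
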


\begin{proof} The necessary condition being obvious, let us suppose that $P_1 (p_{n_0})^* (K)$ and $P_0 (p_{n_0})^* (K)$ are relatively weakly compact and $\displaystyle\exists\lim_{n} \sup_{\varphi\in K} \| \varphi Q({p_{n}})\| =0.$ Let $(q_m)$ be a strong$^*$-null sequence of projections in $M$. By assumptions, for each $\varepsilon>0$, there exists $n_0$ such that $$| \varphi P_{2}({p_{n}}) (q_{m})| \leq \|\varphi Q({p_{n}})\| < \frac{\varepsilon}{3}, \hbox{for every } n\geq n_0, m\in \mathbb{N} \hbox{ and } \varphi\in K.$$ Find $m_0$ satisfying $$\left| \varphi P_0 (p_{n_0}) (q_{m}) \right| < \frac{\varepsilon}{3}, \hbox{ and } \left| \varphi P_1 (p_{n_0}) (q_{m}) \right| < \frac{\varepsilon}{3},$$ for every $m\geq m_0$, $\varphi\in K$. Finally $$|\varphi (q_{m})| \leq \sum_{i=0}^{2} \left|\varphi P_i (p_{n_0}) (q_{m}) \right| <\varepsilon,$$ for every $m \geq m_0,$ $\varphi\in K$.
\end{proof}

We state now an appropriated Jordan version of \cite[Lemma 2.4]{Randri}, the proof is left for the reader.

\begin{lemma}\label{l 2.4} Let $M$ be a JBW$^*$-algebra. Let $(p_n)$ be a strong$^*$-null decreasing sequence of projections in $M$,
$(\varphi_k)$ a bounded sequence in $M_*$ such that
$\displaystyle \lim_{n\to \infty} \sup_{k} \| \varphi_{k} Q({p_{n}})\| =\theta>0.$ The following statements hold: \begin{enumerate}[$(a)$]
\item For each natural $k_0$,
$\displaystyle \lim_{n\to \infty} \sup_{k\geq k_0} \| \varphi_{k} Q({p_{n}})\| =\theta>0$;
\item There exist subsequences $(p_{n_m})$, $(\varphi_{k_m})$ satisfying
$$\exists \lim_{m\to \infty} \| \varphi_{k_{m}} Q({p_{n_{m}}})\| =\theta.$$ $\hfill\Box$
\end{enumerate}
\end{lemma}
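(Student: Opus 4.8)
The plan is to reduce both statements to the single fact that, for every fixed index $k$,
$$\lim_{n\to\infty}\|\varphi_k Q(p_n)\|=0.$$
Indeed, a singleton $\{\varphi_k\}\subset M_*$ is norm-compact, hence relatively weakly compact, so Proposition~\ref{p equiv rwc and uniform integrable} guarantees that it is (countably) uniformly integrable. Evaluating this property on the bounded strong$^*$-null sequence $(p_n)$ (each $p_n$ is a projection, so $\|p_n\|\le 1$, and $(p_n)\to 0$ in $S^*(M,M_*)$) yields $\|\varphi_k Q(p_n)\|=\sup_{\varphi\in\{\varphi_k\}}\|\varphi Q(p_n)\|\to 0$. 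Alternatively, the same decay follows from Corollary~\ref{c 1 JBW alg unif conv of Ua} together with the normality of $\varphi_k$.

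For part $(a)$ I would fix $k_0$ and split the supremum as
$$\sup_{k}\|\varphi_k Q(p_n)\|=\max\Big(\max_{1\le k<k_0}\|\varphi_k Q(p_n)\|,\ \sup_{k\ge k_0}\|\varphi_k Q(p_n)\|\Big).$$
The first term is a maximum of finitely many sequences, each tending to $0$ by the previous paragraph, so it tends to $0$; since the whole expression tends to $\theta>0$ by hypothesis, the second term is forced to tend to $\theta$. This is exactly the assertion in $(a)$.

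For part $(b)$ I would build the two subsequences by induction. Given $n_{m-1}$ and $k_{m-1}$, part $(a)$ applied with $k_0=k_{m-1}+1$ gives $\lim_{n}\sup_{k\ge k_{m-1}+1}\|\varphi_k Q(p_n)\|=\theta$, so one may choose $n_m>n_{m-1}$ with $\sup_{k\ge k_{m-1}+1}\|\varphi_k Q(p_{n_m})\|>\theta-\frac1m$ and then an index $k_m\ge k_{m-1}+1$ with $\|\varphi_{k_m}Q(p_{n_m})\|>\theta-\frac1m$. This produces strictly increasing sequences $(n_m)$, $(k_m)$ whose associated values are bounded below by $\theta-\frac1m$; for a matching upper bound, observe that
$$\|\varphi_{k_m}Q(p_{n_m})\|\le \sup_{k}\|\varphi_k Q(p_{n_m})\|,$$
and the right-hand side tends to $\theta$ as $m\to\infty$ because $n_m\to\infty$ and $\lim_{n}\sup_{k}\|\varphi_k Q(p_n)\|=\theta$ by hypothesis. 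A squeeze then yields $\lim_m\|\varphi_{k_m}Q(p_{n_m})\|=\theta$, as required.

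I expect the only genuine content to lie in the first paragraph: the decay $\|\varphi_k Q(p_n)\|\to 0$ is immediate in the von Neumann setting, but in the Jordan case it is exactly what the uniform-integrability machinery of Section~2 supplies. Everything afterwards is elementary real analysis. One may also note, using $Q(p_n)Q(p_{n+1})Q(p_n)=Q(p_{n+1})$ (a consequence of \eqref{basic equation} and $Q(p_n)p_{n+1}=p_{n+1}$, valid since $(p_n)$ is decreasing), that $n\mapsto\|\varphi Q(p_n)\|$ is non-increasing; this is not needed above, but it exhibits the limit in the hypothesis as an infimum and explains why each $\sup_{k}\|\varphi_k Q(p_n)\|\ge\theta$.
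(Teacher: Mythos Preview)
Your argument is correct. The paper does not give its own proof of Lemma~\ref{l 2.4} (it is ``left for the reader''), so there is nothing to compare against; your approach is exactly the kind of verification the authors had in mind. The only point with any content, as you correctly identify, is that $\|\varphi_k Q(p_n)\|\to 0$ for each fixed $k$, and your appeal to Proposition~\ref{p equiv rwc and uniform integrable} (a singleton is relatively weakly compact, hence uniformly integrable) is the clean way to see this in the Jordan setting.
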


\section{A dichotomy-type theorem for bounded sequences of positive normal functionals}

This section is devoted to establish a Kadec-Pelczy{\'n}ski dichotomy-type theorem for bounded sequences in the positive part of the predual of a JBW$^*$-algebra $M$. The main result (cf. Theorem \ref{c KP dichotomy for positive sequences}) shows that for every bounded sequence $(\varphi_n)$ in $M_*^+$, there exist a subsequence $(\varphi_{n_k})$, bounded sequences $(\phi_k)$ and $(\psi_k)$ in $M_*$ and a decreasing strong$^*$-null sequence of projections $(q_n)$ in $M$ such that $\varphi_{n_k}=\phi_k+\psi_k$, $\phi_k Q(q_k)= 0$, $\psi_k P_2(q_k) = \psi_k$ and the set $\{\phi_k: k\in\mathbb{N}\}$ is relatively weakly compact.\smallskip

To achieve the result, we firstly establish a series of technical lemmas.\smallskip

We recall that elements $a$ and $b$ in a JB-algebra $\mathcal{A}$ are said to \emph{operator commute}
in $\mathcal{A}$ if the multiplication operators $M_a$ and $M_b$ commute.
It is known that $a$ and $b$ generate a JB-subalgebra that can be identified as a norm-closed Jordan subalgebra of
the self adjoint part of some $B(H)$, \cite{Wri77}, and, in this identification, $a$ and $b$
commute in the usual sense whenever they operator commute in $\mathcal{A}$
(compare Proposition 1 in \cite{Top}).  It also follows from the just quoted reference
that $a$ and $b$ operator commute if, and only if, $a^2 \circ b =\J aba$
(i.e., $a^2 \circ b = 2 (a\circ b)\circ a - a^2 \circ b$).\smallskip

\begin{lemma}
\label{l 3.3} Let $M$ be a JBW$^*$-algebra. Let  $(a_n)$ be a strong$^*$-null, decreasing sequence in $M$ with
$0\leq a_n\leq 1$, for every natural $n$.
Then for every $\varepsilon >0$ there exists a sequence of projections $(q_n)$ in $M$ satisfying
:\begin{enumerate}[$(a)$]
\item The sequence $(q_n)$ is strong$^*$-null and $q_n\leq q_1\leq r (a_1)$ for every $n\geq 1$;
\item For each $n_0$, $\displaystyle \left\| Q(a_n) -  Q(Q(q_{n_0}) (a_n))\right\|< \varepsilon,$  for every $n\geq n_0$.
\end{enumerate}
\end{lemma}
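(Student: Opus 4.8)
The plan is to reduce the quadratic estimate in (b) to a plain norm estimate of the form $\|a_n - Q(q_{n_0})(a_n)\|<\delta'$ and then transfer it back to the operators $Q(\cdot)$. Since $Q(x)=Q(x,x)$ is a symmetric bilinear expression, I would write
$$Q(a_n)-Q\big(Q(q_{n_0})(a_n)\big)=Q\big(a_n-Q(q_{n_0})(a_n),\,a_n\big)+Q\big(Q(q_{n_0})(a_n),\,a_n-Q(q_{n_0})(a_n)\big),$$
and, using the contractivity of the triple product together with $\|a_n\|\le 1$ and $\|Q(q_{n_0})(a_n)\|\le 1$, obtain $\|Q(a_n)-Q(Q(q_{n_0})(a_n))\|\le 2\,\|a_n-Q(q_{n_0})(a_n)\|$. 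Thus it suffices to produce projections $q_n$ making $\|a_n-Q(q_{n_0})(a_n)\|$ as small as desired, uniformly for $n\ge n_0$, while keeping $q_n\le q_1\le r(a_1)$ and $(q_n)$ strong$^*$-null.

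The computational core is the following model estimate. Fix $\delta>0$ (to be specified in terms of $\varepsilon$ at the end). If $b$ is positive with $b\le 1$ and $q$ is the spectral projection of $b$ corresponding to $[\delta,1]$, then for every $0\le a\le b$ the Peirce decomposition with respect to $q$ gives $a-Q(q)(a)=P_1(q)(a)+P_0(q)(a)$, and I bound the two summands separately. For the Peirce-$0$ part, $P_0(q)(a)=Q(1-q)(a)$ is positive, and monotonicity of the positive map $Q(1-q)$ together with the spectral estimate $Q(1-q)(b)\le\delta(1-q)$ yields $\|P_0(q)(a)\|\le\delta$. For the Peirce-$1$ part I would invoke the fact recalled in the excerpt that $q$ and $a$ generate a special JB-subalgebra sitting isometrically inside some $B(H)_{sa}$; there $P_1(q)(a)$ is the off-diagonal corner $qa(1-q)+(1-q)aq$, whose norm is controlled by the Cauchy--Schwarz inequality for positive $2\times2$ operator matrices, giving $\|P_1(q)(a)\|\le\|P_2(q)(a)\|^{1/2}\|P_0(q)(a)\|^{1/2}\le\delta^{1/2}$. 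Hence $\|a-Q(q)(a)\|\le\delta^{1/2}+\delta$.

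The hard part is that the obvious candidate $\chi_{[\delta,1]}(a_n)$ is strong$^*$-null and captures $a_n$, yet spectral projections are not monotone, so these need not lie below $q_1$. I would resolve this by passing to the corner. Let $q_1$ be the spectral projection of $a_1$ for $[\delta,1]$, so $q_1\le r(a_1)$, set $N:=M_2(q_1)$ (a JBW$^*$-algebra with unit $q_1$), put $\tilde a_n:=Q(q_1)(a_n)\in N$, and let $q_n$ be the spectral projection of $\tilde a_n$ for $[\delta,1]$ computed \emph{inside} $N$; then automatically $q_n\le 1_N=q_1\le r(a_1)$. Because $Q(q_1)$ is positive and linear, $(\tilde a_n)$ is again decreasing with $0\le\tilde a_n\le q_1$, and because the triple product is jointly strong$^*$-continuous on bounded sets, $\tilde a_n\to0$ strong$^*$. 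Applying the model estimate inside $N$ (with $b=\tilde a_{n_0}$, $a=\tilde a_n$ for $n\ge n_0$, $q=q_{n_0}$) gives $\|\tilde a_n-Q(q_{n_0})(\tilde a_n)\|\le\delta^{1/2}+\delta$, while applying it in $M$ (with $b=a_1$, $a=a_n$, $q=q_1$) gives $\|a_n-\tilde a_n\|=\|a_n-Q(q_1)(a_n)\|\le\delta^{1/2}+\delta$. Finally, since $q_{n_0}\le q_1$ lies in $M_2(q_1)$, the Peirce multiplication rules $\{M_2(q_1),M_i(q_1),M_2(q_1)\}=0$ for $i=0,1$ force $Q(q_{n_0})(a_n)=Q(q_{n_0})(\tilde a_n)$, so the two estimates combine by the triangle inequality into $\|a_n-Q(q_{n_0})(a_n)\|\le2(\delta^{1/2}+\delta)$ for all $n\ge n_0$.

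It then remains to record two routine verifications. First, $(q_n)$ is strong$^*$-null: from the spectral inequality $\tilde a_n^{\,2}\ge\delta^2 q_n$ in $N$ I get, for every positive normal $\varphi$, $\|q_n\|_\varphi^2=\varphi(q_n)\le\delta^{-2}\varphi(\tilde a_n^{\,2})=\delta^{-2}\|\tilde a_n\|_\varphi^2\to0$. Second, choosing $\delta$ with $4(\delta^{1/2}+\delta)<\varepsilon$ and combining with the first paragraph gives $\|Q(a_n)-Q(Q(q_{n_0})(a_n))\|<\varepsilon$ for all $n\ge n_0$, which is (b), while (a) has just been checked. I expect the only genuinely delicate points to be the corner construction (needed to force $q_n\le q_1$) and the Shirshov--Cohn reduction used for the Peirce-$1$ bound; everything else is bookkeeping with the Peirce calculus and the contractivity of the triple product.
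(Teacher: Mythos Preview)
Your proof is correct and follows essentially the same approach as the paper: the same spectral-projection construction $q_1=\chi_{[\delta,1]}(a_1)$, $q_n=\chi_{[\delta,1]}(Q(q_1)a_n)$ (so that $q_n\le q_1$ automatically), the same two-step estimate $\|a_n-\tilde a_n\|$ and $\|\tilde a_n-Q(q_{n_0})\tilde a_n\|$, and the same bilinear expansion to pass back to $\|Q(a_n)-Q(Q(q_{n_0})a_n)\|$. The only cosmetic difference is that for the Peirce-$1$ bound the paper invokes \cite[Lemma 3.5.2(ii)]{HancheStor} directly in the Jordan setting, whereas you embed in $B(H)$ via Shirshov--Cohn and use the $2\times 2$ positive-block inequality; this yields slightly sharper constants but is otherwise the same argument, and your explicit verification that $Q(q_{n_0})(a_n)=Q(q_{n_0})(\tilde a_n)$ via the Peirce multiplication rules is a point the paper leaves implicit.
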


\begin{proof}
Fix $0< \delta <\min\{ \left( \frac{\varepsilon}{20}\right)^2, 1\}$. Define, by functional calculus,
$q_1:= \chi_{(\delta,1)} (a_1)$ and for each $n\geq 2$, $q_{n}:= \chi_{(\delta,1)} (\J {q_1}{a_n}{q_1}).$
Clearly, $$0\leq Q(q_1) (a_n)\leq Q(q_1) (1) = q_1,$$ therefore, $q_n\leq q_1$ for every natural $n$.
Since $0\leq \delta q_n \leq \J {q_1}{a_n}{q_1},$ and the latter defines a strong$^*$-null sequence,
we deduce that $(q_n) \to 0$ in the strong$^*$-topology.\smallskip

In order to simplify notation, let us denote $b_n := \J {q_1}{a_n}{q_1}$. Then $(b_n)$ is a strong$^*$-null,
decreasing sequence of positive elements in $B(M)$. Fix $n\geq n_0$.
If we write $b_n = Q(q_{n_0}) (b_n) + 2 Q(q_{n_0},1-q_{n_0}) (b_n) + Q(1-q_{n_0}) (b_n),$ we have
$$b_n - Q(q_{n_0}) (b_n) = 2 Q(q_{n_0},1-q_{n_0}) (b_n) + Q(1-q_{n_0}) (b_n).$$
Now, since $ Q(1-q_{n_0})$ is a positive operator, $$0\leq Q(1-q_{n_0}) (b_n) \leq Q(1-q_{n_0}) (b_{n_0}) \leq \delta 1.$$ For the other summand, we observe that, since $q_{n_0}$ and $1-q_{n_0}$ operator commute, $2 Q(q_{n_0},1-q_{n_0}) (b_n) = 4 ((1-q_{n_0})\circ (b_n))\circ q_{n_0}$. Applying \cite[Lemma 3.5.2 $(ii)$]{HancheStor}, we get $$2^2 \|Q(q_{n_0},1-q_{n_0}) (b_n)\|^{2} \leq 4^2 \|(1-q_{n_0})\circ (b_n)\|^2 $$ $$\leq 4^2 \|b_n\| \ \|\J {1-q_{n_0}}{b_n}{1-q_{n_0}}\|$$ $$= 4^2 \|b_n\| \ \|Q( {1-q_{n_0}}){b_{n_0}}\| \leq 4^2 \delta.$$ Therefore $$\|Q(q_1) (a_n) - Q(q_{n_0}) (a_n)\|=  \|b_n - Q(q_{n_0}) (b_n)\|\leq 5 \sqrt{\delta}.$$

By a similar argument we can check that $$\| a_n -b_n\| = \| a_n - Q(q_1) (a_n) \| \leq 2 \| Q(q_1, 1- q_1) (a_n) \| + \| Q(1-q_1) (a_n)\|,$$
$$0\leq Q(1-q_1) (a_n) \leq Q(1-q_1) (a_1) \leq \delta 1,$$ and
$$\| 2 Q(q_1, 1- q_1) (a_n) \|^2 = 2^2 \| 2 (a_n\circ (1-q_1))\circ q_1\|^2 \leq 4^2 \| a_n\circ (1-q_1)\|^2$$
$$\hbox{(by \cite[Lemma 3.5.2 $(ii)$]{HancheStor})} \leq 4^2 \| a_n\| \ \|Q(1-q_1) (a_n)\| \leq 4^2 \delta.$$ Combining the above facts we get
$\| a_n -b_n\|\leq 5 \sqrt{\delta},$ and $$\|a_n - Q(q_{n_0}) (a_n) \|\leq 10 \sqrt{\delta}.$$

Finally, the inequality $$\| Q(a_n) -  Q(Q(q_{n_0}) (a_n))\| \leq \|Q(a_n) -  Q(a_n,Q(q_{n_0}) (a_n))\| $$
$$+ \|Q(a_n,Q(q_{n_0}) (a_n)) -  Q(Q(q_{n_0}) (a_n))\| \leq 20 \sqrt{\delta},$$ concludes the proof.
\end{proof}

Let $(a_n)$ be a sequence of symmetric elements in a JBW$^*$-algebra, $M,$ such that $(a_n^2)$ is decreasing and $\|a_n\| \leq 1$ for every $n$. The mapping $Q(a_n)$ is positive (cf. \cite[Proposition 3.3.6]{HancheStor}), so for each positive (normal) functional $\varphi$ on $M$, $$\|\varphi Q(a_n)\|= \varphi Q(a_n) (1) = \varphi (a_n^2)\geq \varphi (a_{n+1}^2) = \|\varphi Q(a_{n+1})\|.$$ Thus, given a bounded subset $K$ in the positive part of $M_*$, the sequence $\Big(\sup_{\varphi\in K} \Big\{ \|\varphi Q(a_n)\| \Big\}\Big)_{n}$ is monotone decreasing and bounded, and hence convergent. If we only assume that  $(a_n)$ is decreasing we cannot assure that the above sequence is convergent, we shall consider, in this case, its lower limit $\displaystyle\liminf_{n} \sup_{\varphi\in K} \Big\{ \|\varphi Q(a_n)\| \Big\}$. It follows from the above arguments that the sequence $\Big(\sup_{\varphi\in K} \Big\{ \|\varphi Q(p_n)\| \Big\}\Big)_n$ converges, whenever $(p_n)$ is a decreasing sequence of projections in $M$.\smallskip

The following result is a direct consequence of Lemma \ref{l 3.3} above.

\begin{corollary}
\label{c l 3.3} Let $(\varphi_k)$ be a sequence of positive functionals
in the closed unit ball of the predual, $M_*$, of a JBW$^*$-algebra $M$.
Suppose there exist $\gamma>0$ and a strong$^*$-null, decreasing sequence $(a_n)$
of positive elements in the closed unit ball of $M$ satisfying that
$\displaystyle{\liminf_{n\to \infty}} \| \varphi_{n} Q({a_n}) \| \geq\gamma$
{\rm(}respectively, $\displaystyle{\liminf_{n\to \infty}} \sup_{k} \| \varphi_{k} Q({a_n}) \| \geq\gamma${\rm)}.
Then for every $\varepsilon >0$ there exists a sequence of projections $(q_n)$ in $M$ satisfying:\begin{enumerate}[$(a)$]
\item The sequence $(q_n)$ is strong$^*$-null and $q_n\leq q_1\leq r (a_1)$ for every $n\geq 1$;
\item For each natural $n_0$, $\displaystyle
{\liminf_{n\to \infty}} \| \varphi_{n} Q({Q({q_{n_0}})(a_n)}) \| \geq \gamma - \varepsilon$
{\rm(}respectively, $\displaystyle
{\liminf_{n\to \infty}} \sup_{k} \| \varphi_{k} Q({Q({q_{n_0}})(a_n)}) \| \geq \gamma - \varepsilon${\rm)}.$\hfill\Box$
\end{enumerate}
\end{corollary}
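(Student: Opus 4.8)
The plan is to feed the sequence $(a_n)$ directly into Lemma \ref{l 3.3}. The hypotheses match exactly: $(a_n)$ is strong$^*$-null, decreasing, and satisfies $0\leq a_n\leq 1$ for every $n$. Hence, for the prescribed $\varepsilon>0$, Lemma \ref{l 3.3} hands us a sequence of projections $(q_n)$ for which conclusion $(a)$ holds verbatim, and such that, for each $n_0$, the operator-norm estimate $\|Q(a_n)-Q(Q(q_{n_0})(a_n))\|<\varepsilon$ holds for every $n\geq n_0$. Thus part $(a)$ of the corollary is immediate, and it only remains to convert the operator-norm bound in $(b)$ of the lemma into the functional estimate claimed in $(b)$ of the corollary.

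For the conversion, fix $n_0$ and take any $n\geq n_0$. For every index $k$ I would estimate, using that $\|\varphi_k\|\leq 1$ together with Lemma \ref{l 3.3}$(b)$,
\[
\left\|\varphi_k Q(Q(q_{n_0})(a_n))\right\| \geq \left\|\varphi_k Q(a_n)\right\| - \|\varphi_k\|\,\left\|Q(a_n)-Q(Q(q_{n_0})(a_n))\right\| \geq \left\|\varphi_k Q(a_n)\right\| - \varepsilon ,
\]
where the first inequality is the reverse triangle inequality applied to the functionals $\varphi_k Q(a_n)$ and $\varphi_k Q(Q(q_{n_0})(a_n))$, whose difference has norm at most $\|\varphi_k\|\,\|Q(a_n)-Q(Q(q_{n_0})(a_n))\|$. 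Crucially, this bound is uniform: the constant $\varepsilon$ does not depend on $n$ (as long as $n\geq n_0$) nor on $k$.

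Finally I would pass to the limit. In the non-uniform case I specialise to $k=n$ and take $\liminf_n$ on both sides; since $\varepsilon$ is a constant, $\liminf$ commutes with the subtraction, so
\[
\liminf_{n\to\infty}\left\|\varphi_n Q(Q(q_{n_0})(a_n))\right\| \geq \liminf_{n\to\infty}\left\|\varphi_n Q(a_n)\right\| - \varepsilon \geq \gamma-\varepsilon .
\]
In the uniform (``respectively'') case I first take $\sup_k$ of the displayed inequality (valid for each $k$ at every fixed $n\geq n_0$) and then $\liminf_n$, obtaining the same conclusion with $\sup_k$ inserted. There is essentially no obstacle here: the argument is a purely formal deduction from Lemma \ref{l 3.3}, and the only point demanding a little care is keeping the reverse triangle inequality pointing the right way and observing that the uniformity in $n\geq n_0$ is exactly what lets the estimate survive the $\liminf$.
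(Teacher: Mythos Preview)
Your argument is correct and is precisely the ``direct consequence'' the paper has in mind: apply Lemma~\ref{l 3.3} to obtain $(q_n)$ and then pass from the operator-norm bound $\|Q(a_n)-Q(Q(q_{n_0})(a_n))\|<\varepsilon$ to the functional estimate via the reverse triangle inequality and $\|\varphi_k\|\le 1$. The paper gives no further details, so there is nothing to compare.
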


We recall that a positive functional $\psi$ on a JB$^*$-algebra $J$
is said to be faithful if $\psi (x) > 0$ for every
positive element $x\in J\backslash \{0\}$. Suppose that a
JBW$^*$-algebra $M$ admits a faithful normal state $\psi$. Then it is known that the
strong*-topology in the closed unit ball of $M$ is metrized by the
distance $$d_{\psi} (a,b) := \left(\psi( (a-b)\circ (a-b)^*
)\right)^{\frac{1}{2}} = \| a-b\|_{\psi}$$
(compare \cite[page 200]{Io}).\smallskip

Given a positive normal functional $\phi$ on a JBW$^*$-algebra $M$
then $\phi$ has a unique \emph{support or carrier projection} $p\in M$, such that $\phi = \phi P_2(p)$,
and $\phi$ is faithful on $Q(p) (M)=M_2 (p)$ (see \cite[Lemma~5.1 and Definition~5.2]{AlfShultz03}, \cite{Neal} or \cite[Proposition 2]{FriRu85}).
Therefore, $\|.\|_{\phi}$ induces a metric on the norm closed
unit ball of $Q(p)(M)$, which gives a topology homeomorphic to the strong$^*$-topology of $M$ restricted to
the closed unit ball of $Q(p)(M)$.

\begin{remark}\label{r support control functional for sequences}{
Let $M$ be a JBW$^*$-algebra.
Every bounded sequence $(\varphi_k)$ of positive functionals in the closed unit ball of $M_*$ essentially lies in the closed unit ball of the predual of a JBW$^*$-subalgebra $M_0$ admitting a faithful normal state $\phi_0$ with the following property: for each $a\geq 0$ in $M$, $\phi_0 (a) =0$ if, and only if, $\varphi_k (a) =0$, for every $k$.
Indeed, let $\phi_0 := \sum_{k=1}^{\infty} \frac{1}{2^{k}} \varphi_{k}\in M_*$. Clearly, $\phi_0$ is a positive normal functional on $M$ satisfying the above property (that is, according to the terminology employed in \cite{BrookSaWri}, $(\varphi_k)$ is absolutely continuous with respect to $\phi_0$). Let $p_0$ denote the support projection of $\phi_0$ in $M$. Then $\phi_0$ is faithful on $M_0:=Q(p_0) (M)$ and the strong$^*$-topology of $M_0$ is metrised by the Hilbertian seminorm $\|.\|_{\phi_0}$ on bounded sets of $M_0$. Since, for each natural $k$, $\varphi_k (1-p_0) =0$, we deduce that $\|\varphi_k\| = \varphi_k (1) = \varphi_k (p_0)$, and hence $\varphi_k = \varphi_k P_2(p_0)$ {\rm(}cf. \cite[Proposition 1]{FriRu85}{\rm)}.\smallskip

The sequence $(\varphi_k)$ is said to be supported by the JBW$^*$-subalgebra $M_0$ when the above conditions are satisfied.}
\end{remark}

Let $A$ be a C$^*$-algebra. If $x$ and $y$ are positive elements in $A$
such that $x \geq y$, then $x^{\alpha}\geq y^{\alpha}$ for any $0 \leq \alpha \leq 1$ (cf. \cite[Proposition I.6.3]{Tak}).
A similar statement is, in general, false for $\alpha>1$.
Now, let $J$ be a JB$^*$-algebra. Since, by the Shirshov-Cohn theorem (cf. \cite[7.2.5]{HancheStor}), the JB$^*$-subalgebra
of $J$ generated by two positive elements is a JC$^*$-algebra, the above operator monotonicity behavior also holds in $J$.

\begin{lemma}
\label{l 3.4} Let $M$ be a JBW$^*$-algebra. Let $(\varphi_k)$ be a sequence of positive functionals in the closed unit ball of $M_*$ and let $p_0$ be a projection in $M$ such that $M_0= Q(p_0) (M)$ is a JBW$^*$-algebra which admits a faithful normal state $\phi_0$ and $(\varphi_k)$ is supported by $M_0$.
Let $(a_n)$ be a strong$^*$-null, decreasing sequence in $M$ satisfying $0\leq a_n\leq 1$, for every $n$.
\begin{enumerate}[$(a)$]\item If $\displaystyle{\liminf_{n\to \infty}} \sup_{k} \| \varphi_{k} Q({a_n}) \| \geq\gamma>0$, then for every $\varepsilon >0$ there exists a strong$^*$-null, decreasing sequence $(p_n)$ of projections in $M$ such that \linebreak$p_n \leq r(Q(p_0)a_1)$ {\rm($\forall n$)} and $\displaystyle \lim_{n\to \infty} \sup_{k} \| \varphi_{k} Q(p_n) \| \geq \gamma - \varepsilon$.
\item If $\displaystyle{\liminf_{n\to \infty}} \| \varphi_{n} Q({a_n}) \| \geq\gamma>0$, then for every $\varepsilon >0$ there exist a subsequence $(\varphi_{\sigma(k)})$ and a strong$^*$-null, decreasing sequence $(p_n)$ of projections in $M$ satisfying $p_n \leq r(Q(p_0)a_1)$ {\rm($\forall n$)} and $\displaystyle \liminf_{n\to \infty} \| \varphi_{\sigma(n)} Q(p_n) \| \geq \gamma - \varepsilon$.
\end{enumerate}
Furthermore, when $M$ admits a faithful normal state, we can take $p_0=1$.
\end{lemma}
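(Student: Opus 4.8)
The plan is to produce the projections by an inductive \emph{compress-then-round} procedure carried out inside $M_0$, with the bookkeeping done through \emph{first powers} rather than squares. First I record the two identities that drive everything. For positive $\varphi_k$ and $0\le a\le 1$ one has $\|\varphi_k Q(a)\| = \varphi_k(Q(a)(1)) = \varphi_k(a^2)$, and since $a^2\le a$ together with $\varphi_k=\varphi_k P_2(p_0)$ (the support hypothesis, Remark~\ref{r support control functional for sequences}), it follows that $\|\varphi_k Q(a_n)\|=\varphi_k(a_n^2)\le\varphi_k(a_n)=\varphi_k(P_2(p_0)a_n)=\varphi_k(Q(p_0)a_n)$. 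Thus, writing $c_n^{(0)}:=Q(p_0)a_n\in M_0^{+}$ — a decreasing, strong$^*$-null sequence of contractions, since $Q(p_0)$ is positive and strong$^*$-continuous on bounded sets — the hypothesis of $(a)$ gives $\liminf_n\sup_k\varphi_k(c_n^{(0)})\ge\gamma$ \emph{with no loss}, whereas squaring after compressing to $M_0$ would lose mass. This is the observation that makes the passage to $M_0$ possible.

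I then build decreasing projections $p_1\ge p_2\ge\cdots$ in $M_0$ together with $c_n^{(j)}:=Q(p_j)a_n=Q(p_j)c_n^{(0)}$, maintaining the invariant $\liminf_n\sup_k\varphi_k(c_n^{(j)})\ge\gamma-\varepsilon_j$ with $\sum_j(\varepsilon_{j+1}-\varepsilon_j)<\varepsilon$. Given $p_j$, the sequence $(c_n^{(j)})_n$ is decreasing, strong$^*$-null and $0\le c_n^{(j)}\le p_j\le 1$, so Lemma~\ref{l 3.3} applies; fixing a small threshold $\delta_j$ and a large index $N_j$, I take $p_{j+1}$ to be the projection $q_{N_j}$ it furnishes. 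Since every spectral projection of an element of $M_2(p_j)$ lies below $p_j$, one has $p_{j+1}\le q_1^{(j)}\le p_j$, so the sequence is automatically decreasing, and $q_1^{(0)}\le r(Q(p_0)a_1)$ forces every $p_n$ below $r(Q(p_0)a_1)$. The norm estimate established in the proof of Lemma~\ref{l 3.3}, namely $\|c_n^{(j)}-Q(p_{j+1})c_n^{(j)}\|\le 10\sqrt{\delta_j}$ for $n\ge N_j$, yields $\varphi_k(c_n^{(j+1)})\ge\varphi_k(c_n^{(j)})-10\sqrt{\delta_j}$ and propagates the invariant. The final estimate is then immediate: from $c_n^{(j)}=Q(p_j)a_n\le Q(p_j)(1)=p_j$ we get $\varphi_k(p_j)\ge\varphi_k(c_n^{(j)})$, so $\sup_k\|\varphi_k Q(p_j)\|=\sup_k\varphi_k(p_j)\ge\gamma-\varepsilon_j$; as $(p_j)$ decreases, $\sup_k\varphi_k(p_j)$ decreases and its limit is $\ge\gamma-\varepsilon$. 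Strong$^*$-nullity is secured by the faithful state: $\phi_0(p_{j+1})\le\delta_j^{-2}$ times a quantity tending to $0$ as $N_j\to\infty$ (because $Q(p_j)a_N\to 0$ strong$^*$), so $\phi_0(p_j)\to 0$, and faithfulness of $\phi_0$ on $M_0$ gives $\bigwedge_j p_j=0$, i.e. $p_j\to 0$ strong$^*$.

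For part $(b)$ I run the identical construction, now tracking the diagonal quantity $\liminf_n\varphi_n(c_n^{(j)})\ge\gamma-\varepsilon_j$, which starts from $\varphi_n(c_n^{(0)})=\varphi_n(a_n)\ge\|\varphi_n Q(a_n)\|$; Lemma~\ref{l 3.3} again preserves it, its estimate being uniform in the functional, and Lemma~\ref{l 2.4} is used to pass to a subsequence along which the relevant values are attained. After $p_j$ is built, the inequality $\varphi_n(p_j)\ge\varphi_n(c_n^{(j)})$ (valid for every $n$) lets me choose $\sigma(j)>\sigma(j-1)$ in the tail where $\varphi_{\sigma(j)}(c_{\sigma(j)}^{(j)})\ge\gamma-\varepsilon_j-2^{-j}$, giving $\liminf_j\|\varphi_{\sigma(j)}Q(p_j)\|\ge\gamma-\varepsilon$. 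The concluding ``furthermore'' is the degenerate case $p_0=1$, $M_0=M$, $Q(p_0)=\mathrm{id}$, in which no support compression is needed and the argument runs verbatim.

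The main obstacle is the simultaneous demand that the $p_n$ be nested, lie under $r(Q(p_0)a_1)$ (hence in $M_0$), and carry mass at least $\gamma-\varepsilon$. The naive spectral projections $\chi_{(\delta,1]}(a_n)$ recover the mass but fail to nest, because the $a_n$ need not operator commute; conversely, compressing the $a_n$ into $M_0$ and squaring restores nesting but destroys mass. The resolution, and the delicate point to verify, is the order ``compress by the previously chosen projection, then round'', which produces nesting for free, combined with measuring everything through the first power $\varphi_k(Q(p_j)a_n)$: the support identity $\varphi_k(Q(p_0)a_n)=\varphi_k(a_n)\ge\varphi_k(a_n^2)$ shows this choice loses no mass in the passage to $M_0$, while the first-power norm estimate inside Lemma~\ref{l 3.3} shows it loses an arbitrarily small amount at each rounding step.
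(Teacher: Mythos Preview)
Your argument is correct and follows the same skeleton as the paper's: reduce to $M_0$ via the support identity $\varphi_k=\varphi_k P_2(p_0)$, then iterate Lemma~\ref{l 3.3}, using the faithful state $\phi_0$ to force $\phi_0(p_{j+1})\to 0$ and hence strong$^*$-nullity of the nested $(p_j)$.  The one place to tighten is the strong$^*$-nullity sentence: rather than the vague ``$\phi_0(p_{j+1})\le\delta_j^{-2}\cdot(\ldots)$'', simply say that Lemma~\ref{l 3.3}(a) makes $(q_m^{(j)})_m$ strong$^*$-null, so one may choose $N_j$ with $\|q_{N_j}^{(j)}\|_{\phi_0}\le 2^{-j}$; faithfulness of $\phi_0$ on $M_0$ then gives $p_j\to 0$ strong$^*$ exactly as in the paper.

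There is a genuine, if modest, streamlining relative to the paper.  The paper first passes to the \emph{square root} $c_n:=(Q(p_0)a_n)^{1/2}$ so that $\|\varphi_k Q(c_n)\|=\varphi_k(Q(p_0)a_n)$ has no loss, and then iterates Corollary~\ref{c l 3.3}, tracking $\|\varphi_k Q(Q(p_j)c_n)\|$.  You skip the square root and track the linear quantity $\varphi_k(Q(p_j)a_n)$ directly, which is legitimate because the estimate $\|a_n-Q(q_{n_0})a_n\|\le 10\sqrt{\delta}$ established inside the proof of Lemma~\ref{l 3.3} (just before the final displayed inequality there) controls the first power, not only $\|Q(a_n)-Q(Q(q_{n_0})a_n)\|$.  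This buys a slightly cleaner induction.  Conversely, for part~(b) the paper is shorter: it simply observes that the hypothesis implies the hypothesis of~(a), applies~(a), and then invokes Lemma~\ref{l 2.4}(b) once to extract the diagonal subsequence; your separate diagonal construction is correct but unnecessary once~(a) is in hand, and the reference to Lemma~\ref{l 2.4} in your sketch is in fact not used in the argument you wrote out.
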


\begin{proof}

$(a)$ Let us observe that, for each $n$ and $k$, $$\| \varphi_{k} Q({a_n}) \|= \varphi_{k} Q({a_n}) (1) = \varphi_{k} Q(p_0) Q({a_n}) (1) $$
$$= \varphi_{k} Q(p_0) ({a_n^2})\leq \varphi_{k} Q(p_0) ({a_n}) $$ where $(Q(p_0) ({a_n}))$ defines a strong$^*$-null, decreasing sequence of positive elements in $M_0$. Let $c_n\in M_0$ denote the square root of $Q(p_0) (a_n)$ (that is, $c_n^2 = Q(p_0) (a_n)$). It is not hard to check that $(c_n)$ is a strong$^*$-null sequence of positive elements in the closed unit ball of $M_0$. By hypothesis, $(c_n^{2})$ is a decreasing sequence, and hence $(c_n)$ also is decreasing. We additionally know that
$$ {\liminf_{n\to \infty}} \sup_{k} \| \varphi_{k} Q({c_n}) \| = {\liminf_{n\to \infty}} \sup_{k} \varphi_{k} ({c_n^2}) $$
$$= {\liminf_{n\to \infty}} \sup_{k} \varphi_{k} Q(p_0) ({a_n}) \geq {\liminf_{n\to \infty}} \sup_{k} \| \varphi_{k} Q({a_n}) \| \geq\gamma.$$

Take a sequence $(\varepsilon_j)\subset (0,1)$ such that $\sum_{j=1}^\infty \varepsilon_j = \varepsilon.$ Corollary \ref{c l 3.3}, applied to $(c_n)$, $(\varphi_k)$, $\gamma$ and $\varepsilon_1$, assures the existence of a strong$^*$-null sequence of projections $(q_n^{(1)})$ in $M_0$ satisfying $q_n^{(1)}\leq q_1^{(1)}\leq r (c_1) \leq r (Q(p_0) a_1)$ for every $n\geq 1$ and,
for each $n_0$, $${\liminf_{n\to \infty}} \sup_{k} \| \varphi_{k} Q(Q(q_{n_0}^{(1)})(c_n)) \| \geq \gamma - \varepsilon_1.$$ Take $n_1 \geq 1$ such that $\|q_{n_1}^{(1)}\|_{\phi_0} \leq \frac12$ and define $c_n^{(2)} := Q(q_{n_1}^{(1)})(c_n)$. Applying Lemma \ref{l 3.3} on $(c_n^{(2)})$, $(\varphi_k)$, $\gamma-\varepsilon_1$ and $\varepsilon_2$, we deduce the existence of a strong$^*$-null sequence of projections $(q_n^{(2)})$ in $M_0$ satisfying $q_n^{(2)}\leq q_1^{(2)}\leq q_{n_1}^{(1)}$ for every $n\geq 1$ and,
for each $n_0$, $${\liminf_{n\to \infty}} \sup_{k} \| \varphi_{k} Q(Q(q_{n_0}^{(2)})\ c_n^{(2)}) \|={\liminf_{n\to \infty}} \sup_{k} \| \varphi_{k} Q(Q(q_{n_0}^{(2)})\ c_n) \| \geq \gamma - \sum_{i=1}^{2} \varepsilon_i.$$ 

By induction, there exists an strictly increasing sequence $(n_j)$ in $\mathbb{N}$ and sequences of projections $(q_{n}^{(j)})$ in $M_0$ satisfying
$q_n^{(j)}\leq q_1^{(j)}\leq q_{n_{j-1}}^{(j-1)}$ for every $n\geq 1$, $\|q_{n_j}^{(j)}\|_{\phi_0} \leq \frac{1}{2^j}$ and
$${\liminf_{n\to \infty}} \sup_{k} \| \varphi_{k} Q(Q(q_{n_j}^{(j)})(c_n)) \| \geq \gamma - \sum_{i=1}^{j} \varepsilon_i.$$

Finally, defining $p_j:= q_{n_{j}}^{(j)}$, we obtain a strong$^*$-null, decreasing sequence of projections in $M_0$ such that $${\liminf_{n\to \infty}} \sup_{k} \| \varphi_{k} Q(Q(p_j)(c_n)) \| \geq \gamma - \sum_{i=1}^{j} \varepsilon_i.$$ Therefore, there exists a subsequence $(c_{n_j})$ such that $$\sup_{k} \| \varphi_{k} Q(p_j)\|\geq \sup_{k} \| \varphi_{k} Q(p_j)Q(c_{n_j})Q(p_j) \| =$$ $$\sup_{k} \| \varphi_{k} Q(Q(p_j)(c_{n_j})) \|  \geq \gamma - \sum_{i=1}^{j+1} \varepsilon_i >  \gamma -  \varepsilon,$$ proving the first statement.\smallskip

$(b)$ Follows from $(a)$ and Lemma \ref{l 2.4}$(b)$.
\end{proof}

The set of all strong$^*$-null decreasing sequences in a von Neumann algebra plays a key role in \cite{Randri}. Following the same notation,
here the symbol $\mathcal{D}$ will stand for the set of all strong$^*$-null decreasing sequences
of projections in a JBW$^*$-algebra $M$.

\begin{corollary}\label{c common support projection} Let $(\varphi_k)$ be a sequence of positive functionals in the closed unit ball of the predual of a JBW$^*$-algebra $M$. Let $p_0$ be a projection in $M$ such that $M_0= Q(p_0) (M)$ is a JBW$^*$-algebra which admits a faithful normal state $\phi_0$ and $(\varphi_k)$ is supported by $M_0$. Then {\small $${\rm\sup}\left\{\lim_{n} \sup_{k} \|\varphi_{k} Q(p_n)\|: (p_n)\in \mathcal{D} \right\} = {\rm\sup}\left\{\lim_{n} \sup_{k} \|\varphi_{k} Q(q_n)\|: \begin{array}{c}
                                                                       (q_n)\in \mathcal{D} \\
                                                                       (q_n)\subset M_0
                                                                     \end{array} \right\}.$$}
\end{corollary}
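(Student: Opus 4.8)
The plan is to prove the two inequalities separately, writing $L$ and $R$ for the left- and right-hand suprema respectively. Since any decreasing strong$^*$-null sequence of projections contained in $M_0$ is in particular a member of $\mathcal{D}$, the index set defining $R$ is a subset of the one defining $L$, so $R\leq L$ ought to reduce to checking that the two descriptions of ``a $\mathcal{D}$-sequence lying in $M_0$'' agree. For this I would invoke the result of L.~Bunce quoted before Corollary \ref{c 1 JW alg unif conv of Ua}: on bounded sets the strong$^*$-topology of the JBW$^*$-subalgebra $M_0=Q(p_0)(M)$ is the restriction of that of $M$, so a sequence of projections $q_n\leq p_0$ is strong$^*$-null in $M_0$ exactly when it is strong$^*$-null in $M$. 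Moreover the Jordan product of $M_0=M_2(p_0)$ is the restriction of that of $M$, whence the projections of $M_0$ are precisely the projections $q$ of $M$ with $q\leq p_0$, and the ordering (so also the decreasing property) is unaffected. This gives $R\leq L$ at once.

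The substance lies in the reverse inequality $L\leq R$, and the real work here is already packaged in Lemma \ref{l 3.4}. Fix an arbitrary $(p_n)\in\mathcal{D}$ and set $\gamma:=\lim_n\sup_k\|\varphi_k Q(p_n)\|$; this limit exists because $(p_n)$ is a \emph{decreasing} sequence of projections, as observed in the paragraph preceding Corollary \ref{c l 3.3}. If $\gamma=0$ there is nothing to prove since $R\geq 0$. If $\gamma>0$, I would apply Lemma \ref{l 3.4}$(a)$ with $a_n:=p_n$, which is a strong$^*$-null decreasing sequence satisfying $0\leq p_n\leq 1$ and $\liminf_n\sup_k\|\varphi_k Q(p_n)\|=\gamma>0$: for each $\varepsilon>0$ it produces a strong$^*$-null decreasing sequence of projections $(q_n)$ with $q_n\leq r(Q(p_0)p_1)\leq p_0$ and $\lim_n\sup_k\|\varphi_k Q(q_n)\|\geq\gamma-\varepsilon$. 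The bound $q_n\leq p_0$ forces $(q_n)\subset M_0$, so $(q_n)$ is admissible for the supremum defining $R$, whence $R\geq\gamma-\varepsilon$.

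Letting $\varepsilon\to 0$ yields $R\geq\gamma$ for this particular $(p_n)$, and taking the supremum over all $(p_n)\in\mathcal{D}$ gives $L\leq R$; combined with $R\leq L$ this is the asserted equality. The only genuine difficulty, namely relocating an arbitrary $\mathcal{D}$-sequence witnessing $L$ to one supported inside $M_0$ while forfeiting at most $\varepsilon$ in the limiting value, is exactly the content of Lemma \ref{l 3.4}$(a)$; once that lemma is available the corollary is a bookkeeping argument, and the only points needing care are the existence of the limit defining $\gamma$ and the verification that the projections delivered by the lemma genuinely lie in $M_0$.
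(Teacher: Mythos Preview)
Your argument is correct and follows essentially the same route as the paper's proof: the inequality $R\leq L$ is immediate, and for $L\leq R$ both you and the paper invoke Lemma~\ref{l 3.4}$(a)$ to replace an arbitrary $(p_n)\in\mathcal{D}$ by a sequence of projections inside $M_0$ at the cost of an arbitrarily small $\varepsilon$. The only cosmetic difference is that the paper first passes to $c_n:=\bigl(Q(p_0)p_n\bigr)^{1/2}$ before applying the lemma, whereas you apply Lemma~\ref{l 3.4}$(a)$ directly to $a_n=p_n$; since that square-root reduction is precisely the opening step in the \emph{proof} of Lemma~\ref{l 3.4}$(a)$, your direct application is actually slightly cleaner.
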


\begin{proof} To simplify notation let $S_1$ and $S_2$ denote the supreme in the left and right hand side, respectively.
The inequality $S_1 \geq S_2$ is clear. In order to prove the reciprocal inequality let us fix an arbitrary $\varepsilon>0$ and $(p_n)\in \mathcal{D}$ with $\displaystyle \lim_{n} \sup_{k} \|\varphi_{k} Q(p_n)\| > S_1 -\varepsilon$. Let us denote $b_n:=Q(p_0) (p_n)\in M_0$ and $c_n:= b_n^{\frac12}$ (Clearly, $0\leq b_n, c_n \leq 1$). The sequence $(b_n)\subset M_0$ is decreasing and strong$^*$-null, and thus $(c_n)$ satisfies the same properties.
Since $$\|\varphi_{k} Q(p_n)\| = \varphi_{k}(p_n) = \varphi_{k} Q(p_0) (p_n)= \varphi_{k} (b_n) = \varphi_{k} (c_n^2) = \|\varphi_k Q(c_n)\|,$$ we can deduce that   $\displaystyle \liminf_{n} \sup_{k} \|\varphi_{k} Q(c_n)\| = \lim_{n} \sup_{k} \|\varphi_{k} Q(p_n)\| > S_1 -\varepsilon$. By Lemma \ref{l 3.4}$(a)$, there exists a strong$^*$-null, decreasing sequence $(q_n)$ of projections in $M$ satisfying $q_n \leq r(c_1) \leq p_0$ (in particular, $(q_n)\subset M_0$) and $\displaystyle \lim_{n\to \infty} \sup_{k} \| \varphi_{k} Q(q_n) \| \geq S_1 - 2 \varepsilon$. 
\end{proof}

\begin{proposition}\label{p 3.5} Let $K$ be a bounded subset in the positive part of the predual of a JBW$^*$-algebra $M$.
Suppose that $K$ is not relatively weakly compact. Then there exists a sequence $(\varphi_k)$ in $K$ satisfying{\small
$${\rm\sup}\left\{\lim_{n} \sup_{k} \|\varphi_{k} Q(p_n)\|: (p_n)\in \mathcal{D} \right\}={\rm \sup} \Big\{{\liminf_{m}} \|\varphi_{m} Q(p_m)\|: (p_m)\in \mathcal{D} \Big\} >0.$$}
\end{proposition}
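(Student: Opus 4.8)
The plan is to track two real‑valued functionals of a sequence. For a sequence $(\varphi_k)\subset K$ write
$$u\big((\varphi_k)\big):=\sup_{(p_n)\in\mathcal D}\ \lim_n\ \sup_k \varphi_k(p_n), \qquad v\big((\varphi_k)\big):=\sup_{(p_m)\in\mathcal D}\ \liminf_m \varphi_m(p_m),$$
so that the two suprema in the statement are exactly $u((\varphi_k))$ and $v((\varphi_k))$, using that $\|\varphi Q(p)\|=\varphi(Q(p)1)=\varphi(p)$ for a positive $\varphi$ and a projection $p$, and that for a decreasing $(p_n)\in\mathcal D$ the inner limit defining $u$ exists (it is decreasing in $n$). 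The proposition asks for a sequence with $u=v>0$. First I would record three elementary facts. Since $\varphi_k(p_m)\le\sup_k\varphi_k(p_m)$, taking $\liminf_m$ and then the supremum over $(p_m)$ gives $v\le u$ for every sequence. Next, every strong$^*$-null sequence of projections is weak$^*$-null, so $\varphi_i(p_n)\to 0$ for each fixed $i$; hence $u$ is unchanged when finitely many initial terms are deleted, and $u$ does not increase when passing to a subsequence (fewer functionals enter the inner supremum). Finally, and crucially, $v$ does \emph{not} decrease under passing to a subsequence: if $(\varphi_{m_i})$ is a subsequence and $(p_m)\in\mathcal D$, then $(p_{m_i})\in\mathcal D$ and $\liminf_i\varphi_{m_i}(p_{m_i})\ge\liminf_m\varphi_m(p_m)$, whence $v((\varphi_{m_i}))\ge v((\varphi_k))$; like $u$, the quantity $v$ is invariant under deleting finitely many initial terms.

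For the starting point: since $K$ is not relatively weakly compact, Proposition~\ref{p equiv rwc and uniform integrable} ($(a)\Leftrightarrow(d)$) provides $(p_n)\in\mathcal D$ with $\lim_n\sup_{\varphi\in K}\varphi(p_n)=:\Gamma>0$. Choosing, for each $n$, some $\varphi^0_n\in K$ with $\varphi^0_n(p_n)>\sup_{\varphi\in K}\varphi(p_n)-\tfrac1n$ yields a sequence $(\varphi^0_k)\subset K$ with $u((\varphi^0_k))\ge\lim_n\sup_k\varphi^0_k(p_n)\ge\Gamma>0$. Set $a_0:=u((\varphi^0_k))$.

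The engine is Lemma~\ref{l 2.4}, which converts a ``uniform'' lower bound $\lim_n\sup_k\varphi_k(p_n)=\theta$ into a ``diagonal'' one: it produces subsequences $(p_{n_m})$ (again in $\mathcal D$) and $(\varphi_{k_m})$ with $\lim_m\varphi_{k_m}(p_{n_m})=\theta$, so the extracted functional subsequence $(\varphi_{k_m})$ has $v\ge\theta$. I would iterate this. Fix $(\eta_j)$ with $\sum_j\eta_j\le a_0/2$, put $(\psi^0_k):=(\varphi^0_k)$, and given $(\psi^{j-1}_k)$ with $a_{j-1}:=u((\psi^{j-1}_k))$, pick $(p^j_n)\in\mathcal D$ with $\lim_n\sup_k\psi^{j-1}_k(p^j_n)=\theta_j>a_{j-1}-\eta_j$ and apply Lemma~\ref{l 2.4} to extract a subsequence $(\psi^j_k)$ of $(\psi^{j-1}_k)$ with $v((\psi^j_k))\ge\theta_j$. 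Writing $a_j:=u((\psi^j_k))$, the inequality $v\le u$ and the monotonicity of $u$ give $\theta_j\le a_j\le a_{j-1}$, so $(a_j)$ decreases with $a_{j-1}-a_j<\eta_j$; hence $a_j\downarrow a_\infty$ with $a_\infty\ge a_0-\sum_j\eta_j\ge a_0/2>0$, and moreover $\theta_j>a_0/2>0$ throughout, so Lemma~\ref{l 2.4} is legitimately applicable at each stage. Note $v((\psi^j_k))\ge\theta_j>a_{j-1}-\eta_j\ge a_\infty-\eta_j$.

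Finally I would diagonalize: let $(\varphi_k)$ be a diagonal subsequence that is eventually a subsequence of each $(\psi^j_k)$. By the two monotonicities together with prefix-invariance, $u((\varphi_k))\le u((\psi^j_k))=a_j$ for every $j$, so $u((\varphi_k))\le a_\infty$; and $v((\varphi_k))\ge v((\psi^j_k))>a_\infty-\eta_j$ for every $j$, so $v((\varphi_k))\ge a_\infty$. Combined with $v\le u$ this forces $u((\varphi_k))=v((\varphi_k))=a_\infty>0$, which is the assertion. The only delicate point is the \emph{opposite} monotonic behaviour of the two suprema under extraction — $u$ nonincreasing and $v$ nondecreasing — together with the positivity bookkeeping through $\sum_j\eta_j\le a_0/2$; once Lemma~\ref{l 2.4} is used to transfer the uniform bound to the diagonal at each stage, the diagonal sequence squeezes both quantities to the common positive value $a_\infty$.
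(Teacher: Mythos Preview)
Your argument is correct and follows essentially the same strategy as the paper: iterate Lemma~\ref{l 2.4} to produce nested subsequences whose ``diagonal'' supremum $v$ nearly matches the ``uniform'' supremum $u$, then diagonalize and squeeze. The organisation is in fact a bit cleaner than the paper's: you isolate once and for all the opposite monotonicities of $u$ and $v$ under extraction (and their prefix-invariance), you use additive errors $\eta_j$ where the paper uses multiplicative ones $(1-\varepsilon_j)$, and you start directly from Proposition~\ref{p equiv rwc and uniform integrable} rather than invoking the Eberlein--\v{S}mulian and Rosenthal theorems to obtain an $\ell^1$-basic starting sequence (that step in the paper is not actually needed for the proposition itself).
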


\begin{proof} It is easy to see that $${\rm\sup}\left\{\lim_{n} \sup_{k} \|\varphi_{k} Q(p_n)\|: (p_n)\in \mathcal{D} \right\}\geq {\rm \sup} \Big\{{\liminf_{m}} \|\varphi_{m} Q(p_m)\|: (p_m)\in \mathcal{D} \Big\},$$ for every sequence $(\varphi_{n})$ in $K$.\smallskip

We may assume that $K\subset B(M_*)$. Since $K$ is not relatively weakly compact, the classical
theorems of Eberlein-${\rm \breve{S}}$mul'jan and Rosenthal assure the existence of a sequence
$(\phi_k)$ in $K$ which is isomorphically equivalent to the unit vector basis of $\ell^{1}$.
Denote $\displaystyle\alpha_0 :=\sup\left\{\lim_{n}\sup_{k} \|\phi_k Q(p_n)\|: (p_n)\in \mathcal{D} \right\}$.
By Proposition~\ref{p equiv rwc and uniform integrable}, we have $\alpha_0 >0$.\smallskip

Fix a sequence $(\varepsilon_j)$ in $(0,1)$, such that $\prod_{j} (1-\varepsilon_j) >0$. Find $(q_n)\in \mathcal{D}$
such that $\displaystyle\lim_{n}\sup_{k} \|\phi_k Q(q_n)\| \geq \alpha_0 (1-\varepsilon_1)$. By Lemma \ref{l 2.4}, there exist subsequences
$(\phi_n^{(1)}) = (\phi_{k_n})$ and $(p_m^{(1)}) = (q_{n_{m}})$ satisfying $$\exists \lim_{m\to \infty} \| \phi_{{m}}^{(1)} Q({p_{{m}}^{(1)}})\| \geq \alpha_0 (1-\varepsilon_1).$$
Set $\displaystyle\alpha_1 :=\sup\left\{\lim_{n}\sup_{m} \|\phi^{(1)}_m Q(p_n)\|: (p_n)\in \mathcal{D} \right\}$. Clearly $\alpha_1\geq \alpha_0 (1-\varepsilon_1).$ applying again Lemma \ref{l 2.4} we can find, by definition of $\alpha_1$, a subsequence $(\phi_n^{(2)}) = (\phi^{(1)}_{k_n})$ and a sequence $(p_m^{(2)})$ in $\mathcal{D}$ such that $$\exists \lim_{m\to \infty} \| \phi_{{m}}^{(2)} Q({p_{{m}}^{(2)}})\| \geq \alpha_1 (1-\varepsilon_2)\geq  \alpha_0 \prod_{j=1}^{2} (1-\varepsilon_j).$$ We can inductively choose subsequences $(\phi_n^{(1)}) \supseteq (\phi_n^{(2)}) \supseteq \ldots \supseteq (\phi_n^{(j)}) \supseteq \ldots$ and sequences $(p_n^{(1)}), \ldots, (p_n^{(j)}), \ldots$ in $\mathcal{D}$ satisfying $$\exists \lim_{m\to \infty} \| \phi_{{m}}^{(j)} Q({p_{{m}}^{(j)}})\| \geq \alpha_{j-1} (1-\varepsilon_j)\geq  \alpha_0 \prod_{i=1}^{j} (1-\varepsilon_i),$$ where $\displaystyle\alpha_j :=\sup\left\{\lim_{n}\sup_{m} \|\phi^{(j)}_m Q(p_n)\|: (p_n)\in \mathcal{D} \right\}$.\smallskip

Now, define $\varphi_{n} :=\phi_n^{(n)}$. Since $(\varphi_n)_{n\geq j}$ is a subsequence of $(\phi_n^{(j)})$ and hence
$$\| \varphi_n Q(p_n^{(j)})\| = \varphi_n (p_n^{(j)})= \phi_{\sigma(n)}^{(j)} (p_n^{(j)})  $$ $$\geq \phi_{\sigma(n)}^{(j)} (p_{\sigma(n)}^{(j)}) = \| \phi_{\sigma(n)}^{(j)} Q(p_{\sigma(n)}^{(j)}) \| \ (n\geq j),$$ it follows that
$$\liminf_{n} \| \varphi_n Q(p_n^{(j)})\| \geq \liminf_{n} \| \phi_{\sigma(n)}^{(j)} Q(p_{\sigma(n)}^{(j)}) \| $$
$$= \lim_{n} \| \phi_{\sigma(n)}^{(j)} Q(p_{\sigma(n)}^{(j)}) \| \geq \alpha_{j-1} (1-\varepsilon_j).$$ In particular,
$\displaystyle\liminf_{n} \| \varphi_n Q(p_n^{(1)})\| \geq \alpha_{0} (1-\varepsilon_1) >0.$
A similar argument shows that $$\beta= \sup\left\{\lim_{n}\sup_{k} \|\varphi_k Q(p_n)\|: (p_n)\in \mathcal{D} \right\} \leq \alpha_{j},$$ for every $j$. The inequality $\displaystyle\frac{1}{1-\varepsilon_j} \liminf_{n} \| \varphi_n Q(p_n^{(j)})\| \geq \alpha_{j-1} \geq \beta$, implies that $\displaystyle\beta \leq \frac{1}{1-\varepsilon_j} {\rm \sup} \Big\{{\liminf_{m}} \|\varphi_{m} Q(p_m)\|: (p_m)\in \mathcal{D} \Big\}$. Taking limit in $j$, we assert that $${\rm\sup}\left\{\lim_{n} \sup_{k} \|\varphi_{k} Q(p_n)\|: (p_n)\in \mathcal{D} \right\}\leq {\rm \sup} \Big\{{\liminf_{m}} \|\varphi_{m} Q(p_m)\|: (p_m)\in \mathcal{D} \Big\}.$$
\end{proof}

The following technical lemma will be required later.

\begin{lemma}\label{l hidden 1} Let $p$ be a projection in a JBW$^*$-algebra $M$, $T:M\to M$ a positive bounded linear operator, and let $\varphi$ be a positive (normal) functional on $M$ with $\|\varphi\|\leq 1$. Then the following inequalities hold for each $a$ in $M$ with $0\leq a\leq 1$: \begin{enumerate}[$(a)$]
\item $\displaystyle \|\varphi Q(p) Q(a) \|^2 \leq  \|\varphi Q(p) Q(a) Q(p)\|$;
\item $\left|\varphi Q(1,p) T (x) \right|^{2}\leq \|T\| \varphi Q(p) T (x)$, for every $0\leq x\leq 1,$ and hence, $$\left\|\varphi Q(1,p) T \right\|\leq 2 \|T\|^{\frac12} \left\|\varphi Q(p) T\right\|^{\frac12};$$
\item $\|\varphi Q(1-p,p) Q(a) \| \leq {2}\ \|\varphi Q(p) Q(a) Q(p)\|^{\frac14}+ \|\varphi Q(p) Q(a) Q(p)\|^{\frac12}.$
\end{enumerate}

\end{lemma}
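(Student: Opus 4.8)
The plan is to handle (a) by a direct Jordan computation and to reduce (b) and (c) to associative Cauchy--Schwarz estimates after cutting the test elements down to positive ones.

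For (a), first I would observe that $\varphi Q(p)Q(a)$ and $\varphi Q(p)Q(a)Q(p)$ are \emph{positive} functionals, each being $\varphi$ precomposed with positivity--preserving maps (here $Q(p)$ and $Q(a)$ send positive elements to positive ones since $p=p^*$ and $a=a^*$). Hence their norms are attained at $1$: using $Q(a)(1)=a^2$ we get $\|\varphi Q(p)Q(a)\|=\varphi(Q(p)(a^2))$, and using the fundamental identity (\ref{basic equation}) in the form $Q(p)Q(a)Q(p)=Q(\{p,a,p\})$ together with $Q(c)(1)=c^2$ we get $\|\varphi Q(p)Q(a)Q(p)\|=\varphi(c^2)$, where $c:=\{p,a,p\}$ is self-adjoint. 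Now the hypothesis $0\le a\le1$ gives $a^2\le a$, so $Q(p)(a^2)\le Q(p)(a)=c$ and therefore $\varphi(Q(p)(a^2))\le\varphi(c)$. Finally the Cauchy--Schwarz inequality for the positive semidefinite form $(u,v)\mapsto\varphi(u\circ v)$ on $M_{sa}$, applied to $u=c$ and $v=1$, yields $\varphi(c)^2\le\varphi(c^2)\,\varphi(1)\le\varphi(c^2)$. Chaining these gives $\|\varphi Q(p)Q(a)\|^2\le\varphi(c)^2\le\varphi(c^2)=\|\varphi Q(p)Q(a)Q(p)\|$, which is (a).

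For (b) and (c) the difficulty, and the step I expect to be the main obstacle, is that the norms are suprema over arbitrary $x\in B(M)$, so the subalgebra generated by $a$, $p$ and $x$ has three generators and may be exceptional; one cannot embed the whole computation in some $B(H)$. I would circumvent this as follows. Both $\varphi Q(1,p)T$ and $\varphi Q(1-p,p)Q(a)$ are \emph{hermitian} functionals, because $Q(1,p)$, $Q(1-p,p)$ and $Q(a)$ are conjugate-linear and $*$-compatible and $T$ is $*$-preserving (being positive); so by the standard fact that a hermitian functional attains its norm on self-adjoint elements it suffices to estimate them on self-adjoint $x$, and then on the positive parts via $x=x^+-x^-$. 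For a fixed positive test element the key point is that $y:=T(x)$ (respectively $w:=Q(a)(x)$) is again positive, whence the relevant data lie in the subalgebra generated by the \emph{two positive elements} $p$ and $y$ (resp. $p$ and $w$). By the Shirshov--Cohn theorem this is a JC$^*$-algebra, so I may embed it in some $B(H)$ and extend $\varphi$ to a positive functional on the enveloping C$^*$-algebra, after which all triple products become the associative expressions $\{p,y,p\}=pyp$ and $\{1,y,p\}=p\circ y$.

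With this reduction, for (b) I would write $\varphi Q(1,p)T(x)=\operatorname{Re}\varphi(py)$ and invoke the C$^*$ Cauchy--Schwarz inequality $|\varphi(py)|^2\le\varphi(pyp)\,\varphi(y)$, bounding $\varphi(y)\le\|y\|\le\|T\|$; this is exactly $|\varphi Q(1,p)T(x)|^2\le\|T\|\,\varphi Q(p)T(x)$. Since $\varphi Q(p)T$ is positive and $0\le x^{\pm}\le1$, each $\varphi Q(p)T(x^{\pm})\le\|\varphi Q(p)T\|$, so $|\varphi Q(1,p)T(x^{\pm})|\le\|T\|^{1/2}\|\varphi Q(p)T\|^{1/2}$; summing the two contributions produces the factor $2$. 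For (c) the same scheme applies with $w=Q(a)(x)\ge0$, $\|w\|\le1$: here $\varphi Q(1-p,p)Q(a)(x)=\operatorname{Re}\varphi((1-p)wp)$, and Cauchy--Schwarz gives $|\varphi((1-p)wp)|^2\le\varphi((1-p)w(1-p))\,\varphi(pwp)$. The first factor is $\le\|w\|\,\varphi(1-p)\le1$, while the second equals $\varphi Q(p)Q(a)(x)\le\|\varphi Q(p)Q(a)\|\le\|\varphi Q(p)Q(a)Q(p)\|^{1/2}$ by part (a); hence each positive test element contributes at most $\|\varphi Q(p)Q(a)Q(p)\|^{1/4}$, and summing over $x^{\pm}$ yields $2\|\varphi Q(p)Q(a)Q(p)\|^{1/4}$, which is dominated by the stated right-hand side. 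The residual verifications (hermiticity, the elementary operator inequalities $a^2\le a$, $x^2\le1$, $a^2\le1$, and the extension of $\varphi$ to the enveloping C$^*$-algebra) are routine.
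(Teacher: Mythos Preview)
Your argument is correct and follows essentially the same route as the paper: part (a) is identical, and for (b) both you and the paper pass via Shirshov--Cohn to a C$^*$-algebra and apply the associative Cauchy--Schwarz inequality (the paper proves $|\varphi(px)|^2\le\varphi(pxxp)\le\varphi(pxp)$ and then substitutes $x\mapsto\|T\|^{-1}T(x)$, which is the same as your setting $y=T(x)$ and factoring $py=(py^{1/2})(y^{1/2})$). The only point of departure is in (c): the paper derives it from (b) via the identity $Q(1-p,p)=Q(1,p)-Q(p)$ together with (a), yielding exactly $2\|\varphi Q(p)Q(a)Q(p)\|^{1/4}+\|\varphi Q(p)Q(a)Q(p)\|^{1/2}$, whereas your direct Cauchy--Schwarz estimate on $\varphi((1-p)wp)$ gives the slightly sharper bound $2\|\varphi Q(p)Q(a)Q(p)\|^{1/4}$, which of course still implies the stated inequality.
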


\begin{proof}$(a)$ Let $\varphi$, $a$ and $p$ be as in the hypothesis of the lemma. Since $\varphi Q(p) Q(a)$
and $\varphi Q(p) Q(a) Q(p)$ are positive functionals on $M$, it follows, by the Cauchy-Schwarz inequality, that
$$\|\varphi Q(p) Q(a) \|^2 = \Big(\varphi Q(p) Q(a) (1)\Big)^2 = \Big(\varphi Q(p) (a^2)\Big)^2 $$
$$\leq \Big( \varphi Q(p)(a) \Big)^2 \leq \varphi \Big(Q(p)(a)\circ Q(p)(a)\Big)= \varphi Q( Q(p)(a) ) (1) $$
$$= \|\varphi Q( Q(p)(a) ) \| = \|\varphi Q(p) Q(a) Q(p)\|. $$

$(b)$ First, we observe that $Q(1,p) (z) = p\circ z,$ for every $z$ in $M$. Fix $x$ in $M$ with $0\leq x\leq 1$.
Since, by the Shirshov-Cohn theorem (see \cite[7.2.5]{HancheStor}), the JB$^*$-subalgebra $J$ generated by $p$, $x$ and
the unit is a JC$^*$-algebra, we may assume that $J$ is a JC$^*$-subalgebra of a C$^*$-algebra $A$.
The functional $\varphi|_{J}$ extends to a positive functional of $A$, denoted again by $\varphi$ (cf. \cite[Proposition]{Bun01}).
Since $\varphi$ is positive, $\varphi (x p) = \overline{\varphi ((x p)^*)} = \overline{\varphi (p x)}$. In this case we have
$$\left|\varphi Q(1,p) (x) \right|^{2} = \left|\varphi(p\circ x)\right|^2 = \frac14 \Big|\varphi(p x+ x p)\Big|^2$$
$$\leq |\varphi(p x)|^2 \leq \varphi(p x x p) \leq \varphi(p x p) = \varphi Q(p) (x).$$ Since $x$ was arbitrarily chosen, the first inequality follows by replacing $x$ with $\frac{1}{\|T\|} T (x)$. The second statement is a direct consequence of the first one.\smallskip

$(c)$ Combining $(a)$ and $(b)$ with $T=Q(a)$, we get $$\|\varphi Q(1,p) Q(a) \| \leq 2\ \|\varphi Q(p) Q(a) \|^{\frac12} \leq 2\ \|\varphi Q(p) Q(a) Q(p)\|^{\frac14},$$ which assures that $$\|\varphi Q(1-p,p) Q(a) \|\leq \|\varphi Q(1,p) Q(a) \| + \|\varphi Q(p) Q(a) \|$$ $$ \leq 2\ \|\varphi Q(p) Q(a) Q(p)\|^{\frac14}+ \|\varphi Q(p) Q(a) Q(p)\|^{\frac12}.$$
\end{proof}

Following standard notation, the symbols $\bigvee$ and $\bigwedge$ to denote the
least upper bound and greatest lower bound in the set of projections in $M$. The
existence of the least upper bound and greatest lower bound of a family of projections is guaranteed by
\cite[Lemma 4.2.8]{HancheStor}.

\begin{lemma}\label{l operator commuting supreme and sum} Let $(q_n)$ be a sequence of projections in a JBW$^*$-algebra $M$
whose elements mutually operator commute. Let $\phi$ be a positive normal functional on $M$ such that
$\sum_{n\geq 1} \phi (q_n) <\infty$. Then defining $p_n= \bigvee_{k\geq n} q_k$, it follows that $\phi (p_n) \to 0$.
\end{lemma}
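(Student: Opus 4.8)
The plan is to use the operator commutativity hypothesis to pass to an associative (abelian) setting, where the claim is nothing but a Borel--Cantelli estimate. Since the $q_n$ mutually operator commute, they generate an associative JBW$^*$-subalgebra $N$ of $M$, i.e.\ an abelian von Neumann algebra (recall the discussion of operator commuting elements preceding Lemma~\ref{l 3.3}); concretely we may picture the $q_n$ as characteristic functions in some $L_{\infty}(\Omega,\mu)$. As $N$ is weak$^*$-closed and the sequence $\big(\bigvee_{k=n}^{N}q_k\big)_{N}$ of finite suprema is increasing and lies in $N$, its supremum $p_n=\bigvee_{k\geq n}q_k$ (computed in $M$) already belongs to $N$; hence all the relevant lattice operations may be carried out inside $N$, and $\phi|_N$ is again a positive normal functional.

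First I would establish finite sub-additivity. For two operator commuting projections $q,r$ one has $q\wedge r = q\circ r \geq 0$ and $q\vee r = q+r-q\wedge r$, so positivity of $\phi$ yields $\phi(q\vee r)\leq \phi(q)+\phi(r)$. Iterating this inside $N$ (a finite supremum of the $q_k$ again operator commutes with the remaining projections, since $N$ is abelian) gives $\phi\big(\bigvee_{k=n}^{N}q_k\big)\leq \sum_{k=n}^{N}\phi(q_k)$ for every $N\geq n$. Next I would let $N\to\infty$: the finite suprema increase weak$^*$ to $p_n$, and, $\phi$ being normal, $\phi(p_n)=\lim_{N}\phi\big(\bigvee_{k=n}^{N}q_k\big)\leq\sum_{k=n}^{\infty}\phi(q_k)$. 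Finally, the assumption $\sum_{k}\phi(q_k)<\infty$ forces the tail $\sum_{k=n}^{\infty}\phi(q_k)$ to tend to $0$ as $n\to\infty$, whence $\phi(p_n)\to 0$.

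I expect the main obstacle to be not the estimate itself---which is the classical bound $\mu\big(\bigcup_{k\geq n}E_k\big)\leq \sum_{k\geq n}\mu(E_k)\to 0$---but the careful justification of the reduction to $N$: one must confirm that the suprema $\bigvee$ taken in $M$ coincide with those taken in the abelian subalgebra $N$, that the finite suprema genuinely increase weak$^*$ to $p_n$ so that normality of $\phi$ applies, and that the identities $q\vee r=q+r-q\wedge r$ and $q\wedge r=q\circ r$ transfer from the associative representation back to $M$. Once these bookkeeping points are settled the conclusion is immediate.
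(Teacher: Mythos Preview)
Your approach is correct and essentially coincides with the paper's: reduce to an associative commutative JBW$^*$-subalgebra (an abelian von Neumann algebra, realised as $C(K)$ for a Stonean space via \cite[Proposition~2.11]{AlfShultz03}) and then invoke the classical Borel--Cantelli bound. The paper is in fact more terse than you are---it records only the reduction step and declares the remainder ``an exercise left to the reader''---so your explicit sub-additivity and normality argument is a welcome elaboration rather than a departure.
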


\begin{proof} It follows from the definition of operator commutativity that the $p_n$'s generate
an associative commutative JBW$^*$-subalgebra which is JBW$^*$-isomorphic to
an abelian von Neumann algebra of the form $C(K)$, where $K$ is a Stonean compact Hausdorff
space (compare \cite[Proposition 2.11]{AlfShultz03}). The rest of the proof is an exercise left to the reader.
\end{proof}

Now we have the tools too prove a Jordan version of \cite[Theorem 3.1]{Randri}.

\begin{theorem}\label{t 3.1} Let $K$ be a bounded subset in the positive part of the predual of a JBW$^*$-algebra $M$.
Then there there exists a sequence $(q_n)$ in
$\mathcal{D}$ and a sequence
$(\phi_k)$ in $K$ satisfying
$$\sup\left\{\lim_{n}\sup_{k\in \mathbb{N}} \|\phi_k Q(p_n)\|: (p_n)\in \mathcal{D} \right\} = \lim_{n}\sup_{k\in \mathbb{N}} \|\phi_k Q(q_n)\|.$$
Furthermore, there exists a projection $q_0$ in $M$ such that $M_0= Q(q_0) (M)$ is a JBW$^*$-algebra which admits a faithful normal state, $(\varphi_k)$ is supported by $M_0$ and the above supreme coincides with $$ \sup\left\{\lim_{n}\sup_{k\in \mathbb{N}} \|\phi_k Q(p_n)\|: (p_n)\in \mathcal{D}, (p_n)\subseteq M_0 \right\} .$$
\end{theorem}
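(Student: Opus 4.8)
The plan is to produce a single sequence $(\phi_k)$ in $K$ realising the \emph{global} supremum and then to attain that value by an explicitly constructed decreasing sequence of projections. Throughout I assume, as in the previous proofs, that $K\subseteq B(M_*)$, and I set $\Lambda:=\sup\{\lim_n\sup_k\|\phi_kQ(p_n)\|:(\phi_k)\subseteq K,\,(p_n)\in\mathcal{D}\}$, which is finite. For each fixed sequence $\Phi=(\phi_k)$ the inner quantity $\sup_k\|\phi_kQ(p_n)\|=\sup_k\phi_k(p_n)$ is monotone decreasing in $n$ along any $(p_n)\in\mathcal{D}$ (as recorded before Corollary \ref{c l 3.3}), so the limit in $n$ always exists; write $N(\Phi):=\sup\{\lim_n\sup_k\|\phi_kQ(p_n)\|:(p_n)\in\mathcal{D}\}$. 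First I would choose, for each $j$, a sequence $\Phi^{(j)}=(\phi^{(j)}_k)_k$ in $K$ with $N(\Phi^{(j)})>\Lambda-1/j$, and merge the countable family $\{\phi^{(j)}_k:j,k\}$ into a single sequence $(\phi_k)$ via a bijection $\mathbb{N}\to\mathbb{N}\times\mathbb{N}$. Since $(\phi_k)$ contains each $\Phi^{(j)}$ as a subsequence, taking suprema over the larger index set gives $N((\phi_k))\geq N(\Phi^{(j)})$ for every $j$, whence $N((\phi_k))=\Lambda$.

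Having fixed this maximising sequence, Remark \ref{r support control functional for sequences} provides a projection $q_0$ such that $M_0:=Q(q_0)(M)$ admits a faithful normal state $\phi_0$ (one may take $\phi_0=\sum_k 2^{-k}\phi_k$ with support projection $q_0$) and $(\phi_k)$ is supported by $M_0$; Corollary \ref{c common support projection} then immediately yields the last asserted equality, namely that $\Lambda$ equals the same supremum restricted to $(p_n)\in\mathcal{D}$ with $(p_n)\subseteq M_0$. It therefore remains to attain $\Lambda=N((\phi_k))$ by a single sequence $(q_n)\in\mathcal{D}$, and by the previous line I may look for $(q_n)$ inside $M_0$, where the strong$^*$-topology on bounded sets is metrised by $\|\cdot\|_{\phi_0}$.

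For the attainment I would take, for each $j$, a witnessing sequence $(p^{(j)}_n)_n\in\mathcal{D}$ with $(p^{(j)}_n)\subseteq M_0$ and $\ell_j:=\lim_n\sup_k\phi_k(p^{(j)}_n)\geq\Lambda-1/j$. Since $(p^{(j)}_n)_n$ is strong$^*$-null and the masses $\sup_k\phi_k(p^{(j)}_n)$ decrease to $\ell_j$, I can pick an index $n(j)$ so that the single projection $r_j:=p^{(j)}_{n(j)}\in M_0$ satisfies simultaneously $\phi_0(r_j)<4^{-j}$ and $\sup_k\phi_k(r_j)\geq\Lambda-1/j$. I then set $q_n:=\bigvee_{j\geq n}r_j$, a decreasing sequence of projections in $M_0$. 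Because $q_n\geq r_n$ and each $\phi_k$ is positive, $\sup_k\phi_k(q_n)\geq\sup_k\phi_k(r_n)\geq\Lambda-1/n$, so $\lim_n\sup_k\|\phi_kQ(q_n)\|\geq\Lambda$; since the reverse inequality is built into the definition of $N((\phi_k))=\Lambda$, the value $\Lambda$ is attained at $(q_n)$, as required.

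The step I expect to be the main obstacle is verifying that $(q_n)\in\mathcal{D}$, i.e. that $(q_n)$ is strong$^*$-null, since the projections $r_j$ need not operator commute and Lemma \ref{l operator commuting supreme and sum} does not apply directly to their join. The way I would handle this is to establish the Jordan analogue of the von Neumann inequality $\phi_0(p\vee q)\leq\phi_0(p)+\phi_0(q)$: the JBW$^*$-subalgebra generated by two projections is special by the Shirshov--Cohn theorem (\cite[7.2.5]{HancheStor}), so it embeds in a von Neumann algebra where $1-p\vee q=(1-p)\wedge(1-q)\geq 1-p-q$ forces $p\vee q\leq p+q$, the lattice operations being computed consistently in $M_0$. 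Iterating this subadditivity gives $\phi_0(\bigvee_{j=n}^{N}r_j)\leq\sum_{j=n}^{N}\phi_0(r_j)$, and normality of $\phi_0$ lets me pass to the increasing limit in $N$ to obtain $\phi_0(q_n)\leq\sum_{j\geq n}\phi_0(r_j)<\sum_{j\geq n}4^{-j}\to 0$. Faithfulness of $\phi_0$ on $M_0$ together with the metrisability of the strong$^*$-topology on bounded subsets of $M_0$ then converts $\phi_0(q_n)\to 0$ into $\|q_n\|_{\phi_0}\to 0$, hence into strong$^*$-nullity of $(q_n)$ in $M$, completing the verification.
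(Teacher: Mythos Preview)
Your argument is clean and correct up to the last paragraph, but the step you yourself flag as ``the main obstacle'' does fail, and for exactly the reason you worry about. The inequality you propose, $(1-p)\wedge(1-q)\geq 1-p-q$ (equivalently $p\vee q\leq p+q$), is \emph{false} for non-commuting projections. In $M_2(\CC)$ take
\[
p=\begin{pmatrix}1&0\\0&0\end{pmatrix},\qquad q=\tfrac12\begin{pmatrix}1&1\\1&1\end{pmatrix}.
\]
Then $p\vee q=I$, while $p+q$ has eigenvalues $1\pm\tfrac{1}{\sqrt2}$, so $p+q-p\vee q$ has the negative eigenvalue $-\tfrac{1}{\sqrt2}$. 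Taking $\phi_0$ to be the vector state associated with the corresponding eigenvector gives $\phi_0(p\vee q)=1>1-\tfrac{1}{\sqrt2}=\phi_0(p)+\phi_0(q)$. So subadditivity of the join under a state is not available, and your estimate $\phi_0(q_n)\leq\sum_{j\geq n}\phi_0(r_j)$ is unjustified; without it you cannot conclude that $(q_n)$ is strong$^*$-null, and hence you cannot invoke the bound $\lim_n\sup_k\|\phi_kQ(q_n)\|\leq\Lambda$.

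This is not a minor technicality: it is precisely the difficulty that forces the paper's much longer argument. The paper proceeds by contradiction and, through a rather delicate induction using Lemma~\ref{l 3.4}, constructs increments $p_n^{(j+1)}$ that are \emph{orthogonal} to the previously accumulated projections $S_m^{(j)}$ (see \eqref{eq induction one}). This orthogonality is exactly what guarantees that the resulting projections $S^{(j)}_{k_j}$ mutually operator commute, so that Lemma~\ref{l operator commuting supreme and sum} can legitimately be applied to control the join. Your direct approach offers no mechanism for producing such commutativity among the witnesses $r_j$, and I do not see an easy repair: a generic collection of near-optimal projections with small $\phi_0$-mass can have a join whose $\phi_0$-mass stays bounded away from zero.
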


\begin{proof} When $K$ is relatively weakly compact the supremum is zero, and hence attained at every $ (p_n)\in \mathcal{D}$ (cf. Proposition \ref{p equiv rwc and uniform integrable}). We may therefore assume that $K$ is not relatively weakly compact.\smallskip

Arguing by contradiction, we assume that for every sequence $(\varphi_k)$ in $K$, the supreme
$\displaystyle\sup\left\{\lim_{n}\sup_{k\in \mathbb{N}} \|\varphi_k Q(p_n)\|: (p_n)\in \mathcal{D} \right\}$ is not attained.\smallskip

We may assume that $K\subset B(M_*)$. By assumptions, $K$ is not relatively weakly compact, so from the Eberlein-${\rm \breve{S}}$mul'jan and Rosenthal
theorems there exists a sequence $(\psi_k)$ in $K$ which is isomorphically equivalent to the unit vector basis of $\ell^{1}$.\smallskip

Proceeding as in Remark \ref{r support control functional for sequences}, we set $\displaystyle \phi_0 := \sum_{k=1}^{\infty} \frac{1}{2^{k}} \psi_{k}\in M_*$ and $q_0= s(\phi_0)$ the support projection of $\phi_0$. Then $\phi_0$ is faithful on $M_0:=Q(q_0) (M)$, the strong$^*$-topology of $M_0$ is metrised by the Hilbertian seminorm $\|.\|_{\phi_0}$ on bounded sets of $M_0$, and the sequence $(\psi_k)$ is supported by $M_0$ (in particular, $\psi_k = \psi_k P_2(q_0)$). Since the set $\widetilde{K}:=\{\psi_k|_{M_0} : k\in \mathbb{N}\}$ is not relatively weakly compact in $(M_0)_{*},$ by Proposition \ref{p 3.5} and Corollary \ref{c common support projection}, there exists a sequence $(\varphi_k)$ in $\widetilde{K}\subset K$ satisfying
\begin{equation}\label{eq attaining} \alpha= {\rm\sup}\left\{\lim_{n} \sup_{k} \|\varphi_{k} Q(p_n)\|: (p_n)\in \mathcal{D},\ (p_n)\subset M_0 \right\}
\end{equation} $$= {\rm \sup} \Big\{{\liminf_{m}} \|\varphi_{m} Q(p_m)\|: (p_m)\in \mathcal{D}, (p_m)\subset M_0 \Big\} $$
$$={\rm\sup}\left\{\lim_{n} \sup_{k} \|\varphi_{k} Q(p_n)\|: (p_n)\in \mathcal{D} \right\}>0.$$
By assumption, $\alpha$ is not attained. We shall get a contraction working on $M_0$. Henceforth, we assume $M=M_0$ and $\phi_0$ is a faithful normal functional on $M$.\smallskip

Applying an induction argument, we can find sequences $(m_j)$ and $(n_j)$ in $\mathbb{N}$ with $(n_j)$ strictly increasing, subsequences $(\varphi_n)= (\varphi_n^{(1)}) \supseteq (\varphi_n^{(2)}) \supseteq \ldots \supseteq (\varphi_n^{(j)}) \supseteq \ldots$, and sequences $(s_n^{(j)})_{n},$ $(S_n^{(j)})_{n},$ $(p_n^{(1)})$, $(p_n^{(2)}), \ldots (p_n^{(j)}), \ldots$ in $\mathcal{D}$ satisfying the following properties:
\begin{equation}\label{eq induction one} p_n^{(j+1)} \perp S^{(j)}_{m}, \ \forall n,m\in \mathbb{N} \hbox{ with } m\geq n_{j},
\end{equation}
\begin{equation}\label{eq induction two} \alpha (1-\frac{1}{2^{m_j-1}}) \leq  \lim_{n} \sup_{k\in \mathbb{N}} \| \varphi_k Q(S_n^{(j)})\| \left(< \alpha (1-\frac{1}{2^{m_j}})\right),
\end{equation}
\begin{equation}\label{eq induction two half} \sup_{k\in \mathbb{N}} \| \varphi_k Q(S_{n_j}^{(j)})\| < \alpha (1-\frac{1}{2^{m_j}}),
\end{equation} and
\begin{equation}\label{eq induction three} \liminf_{n} \|\varphi_{n}^{(j+1)} Q(s_n^{(j+1)})\| \geq  \liminf_{n} \|\varphi_{n}^{(j)} Q(s_n^{(j)})\| + \frac{\alpha^{4}}{(2^{m_j+5})^4},
\end{equation} where $(s_n^{(1)})=(S_n^{(1)})_{n}=(p_n^{(1)})_{n}$ and for $j\in \mathbb{N}$, $(s_n^{(j+1)})$ and $(S_n^{(j+1)})_{n}$ are the sequence in $\mathcal{D}$ given by
$$s_n^{(j+1)} := \left\{\begin{array}{lc}
    1  & ; n<n_{j} \\
    s_{\sigma_j(n)}^{(j)} + p_{n}^{(j+1)}  & ; n\geq n_j,
    \end{array} \right.
S_n^{(j+1)} := \left\{\begin{array}{lc}
    1  & ; n<n_{j} \\
    S_{n}^{(j)} + p_{n}^{(j+1)}  & ; n\geq n_j,
    \end{array} \right.
    $$ and $\sigma_j: \mathbb{N}\to \mathbb{N}$ is the mapping satisfying $(\varphi_{\sigma_j(n)}^{(j)}) = (\varphi_n^{(j+1)})$.\smallskip

Since $\alpha$ is not attained, we can find $(p_n^{(1)})\in \mathcal{D}$ and $m_1$ in $\mathbb{N}$ satisfying $$\alpha (1-\frac{1}{2^{m_1-1}}) \leq \lim_{n} \sup_{k} \|\varphi_{k} Q(p^{(1)}_n)\| < \alpha (1-\frac{1}{2^{m_1}}).$$ There exists $n_1 >1$ such that $$\sup_{k} \|\varphi_{k} Q(p^{(1)}_{n_1})\| < \alpha (1-\frac{1}{2^{m_1}}).$$

Suppose now the first $j$ elements and sequences have been defined.
By hypothesis (see $(\ref{eq attaining})$), there exists $(\widetilde{q}_m)$ in $\mathcal{D}$ with $$\displaystyle{\liminf_{n}} \|\varphi_{n} Q(\widetilde{q}_n)\| > \alpha (1-\frac{1}{2^{m_j+1}}).$$ Since $ (\varphi_m^{(j)}) = (\varphi_{n_m})$ is a subsequence of $ (\varphi_n)$, we have $$\liminf_{m} \|\varphi_{m}^{(j)} Q(\widetilde{q}_{n_m})\| \geq {\liminf_{n}} \|\varphi_{n} Q(\widetilde{q}_n)\| > \alpha (1-\frac{1}{2^{m_j+1}}).$$

If we write $q_m:= \widetilde{q}_{n_m}$ and $$\varphi_{m}^{(j)} Q(q_m)  = \varphi_{m}^{(j)} Q(S^{(j)}_{n_j}) Q(q_m) + 2 \varphi_{m}^{(j)} Q(1-S^{(j)}_{n_j},S^{(j)}_{n_j}) Q(q_m) $$ $$+ \varphi_{m}^{(j)} Q(1-S^{(j)}_{n_j}) Q(q_m).$$ Now, recalling that $Q(a) Q(b) Q(a) = Q(Q(a)b),$ we deduce, by Lemma \ref{l hidden 1} $(a)$ and $(c)$, that $$\|\varphi_{m}^{(j)} Q(q_m) \| \leq \| \varphi_{m}^{(j)} Q(S^{(j)}_{n_j})\| + 6 \| \varphi_{m}^{(j)} Q( Q(1-S^{(j)}_{n_j})(q_m)) \|^{\frac14} $$ $$+ \| \varphi_{m}^{(j)} Q( Q(1-S^{(j)}_{n_j})(q_m)) \|^{\frac12}.$$ Since $K\subset B(M_*)$, we have, by $(\ref{eq induction two half})$, $$7 \| \varphi_{m}^{(j)} Q( Q(1-S^{(j)}_{n_j})(q_m)) \|^{\frac14} \geq \|\varphi_{m}^{(j)} Q(q_m) \| - \| \varphi_{m}^{(j)} Q(S^{(j)}_{n_j})\| $$ $$\geq \|\varphi_{m}^{(j)} Q(q_m) \| - \alpha (1-\frac{1}{2^{m_j}}),$$ which implies that
$${\liminf_{m}}  \| \varphi_{m}^{(j)} Q( Q(1-S^{(j)}_{n_j})(q_m)) \|^{\frac14} $$ $$\geq \frac17 \left( {\liminf_{m}} \|\varphi_{m}^{(j)} Q(q_m)\| - \alpha (1-\frac{1}{2^{m_j}})\right)> \frac{\alpha}{2^{m_j+4}}.$$

We define $a_m^{(j)} := Q(1-S^{(j)}_{n_j})(q_m).$ Clearly, $(a^{(j)}_m)$ is a strong$^*$-null, decreasing sequence with $0\leq a_m^{(j)} \leq 1$ and $\displaystyle{\liminf_{m}}  \| \varphi_{m}^{(j)} Q( a_m^{(j)}) \| > \frac{\alpha^4}{(2^{m_j+4})^4}.$ By Lemma \ref{l 3.4}$(b)$, applied to $(\varphi_{m}^{(j)})$ and $( a_m^{(j)})$ $\Big($with $\varepsilon = \frac{\alpha^{4}}{(2^{m_j+4})^4} -\frac{\alpha^{4}}{(2^{m_j+5})^4}\Big)$,
there exists $(p_n^{(j+1)})\in \mathcal{D}$ with $p_1^{(j+1)} \leq r(a_1^{(j)})\leq 1- S_{n_j}^{(j)}$ and a subsequence $(\varphi_{m}^{(j+1)})= (\varphi_{\sigma_j(m)}^{(j)})$ such that
$$ \liminf_{n} \|\varphi_{n}^{(j+1)} Q(p_n^{(j+1)})\| \geq  \frac{\alpha^{4}}{(2^{m_j+5})^4}.$$

Observe that since $p_1^{(j+1)} \leq  1- S_{n_j}^{(j)}$ we have $p_n^{(j+1)} \perp  S_{m}^{(j)}$ for every $n,m\in \mathbb{N}$ with $m\geq n_j$.
Since, by hypothesis, $\displaystyle 0\leq \lim_n \sup_k \|\varphi_k Q(S_n^{(j+1)})\| < \alpha$,
we can choose convenient $m_{j+1}$ and $n_{j+1}>n_{j}$ satisfying $(\ref{eq induction two})$ and $(\ref{eq induction two half}).$\smallskip

It is also clear that $s_n^{(j)}\leq S_n^{(j)}$, and thus, for each $n\geq n_j$, $s_n^{(j)} + p_n^{(j+1)}$
is a projection in $M$ satisfying that $$\liminf_{n}  \| \varphi_n^{(j+1)} Q(s_n^{(j+1)})\| = \liminf_{n} \| \varphi_n^{(j+1)} Q(s_{\sigma_j(n)}^{(j)} + p_n^{(j+1)})\|$$
$$\geq \liminf_{n} \| \varphi_n^{(j+1)} Q(s_{\sigma_j(n)}^{(j)})\| + \liminf_{n} \| \varphi_n^{(j+1)} Q( p_n^{(j+1)})\| $$
$$\geq \liminf_{n} \| \varphi_{\sigma_j(n)}^{(j)} Q(s_{\sigma_j(n)}^{(j)})\| + \frac{\alpha^{4}}{(2^{m_j+5})^4}\geq
\liminf_{n} \| \varphi_n^{(j)} Q(s_n^{(j)})\| + \frac{\alpha^{4}}{(2^{m_j+5})^4},$$ which proves $(\ref{eq induction three})$
and concludes the induction argument.\smallskip

We shall now show that $\displaystyle \lim_j m_j =+\infty$. By $(\ref{eq induction three})$, $$\liminf_{n}  \| \varphi_n^{(j)} Q(s_n^{(j)})\| \geq \liminf_{n}  \| \varphi_n^{(1)} Q(s_n^{(1)})\| + \sum_{k=1}^{j-1} \frac{\alpha^{4}}{(2^{m_k+5})^4},$$ witnessing the desired statement.\smallskip

For each $n\geq n_j> n_{j-1} >\ldots > n_1$, we have $$S_{n}^{(j)} = S_{n}^{(j-1)} + p^{(j)}_{n} =S_{n}^{(j-2)} + p^{(j-1)}_{n} + p^{(j)}_{n}=\ldots = \sum_{k=1}^{j} p_{n}^{(k)},$$ where the summands appearing in the last sum are mutually orthogonal. We claim that $S_{n}^{(j)}$ and $S_{m}^{(l)}$ operator commute whenever $n\geq n_j$, $m\geq n_l$, $j\geq l$ and $n\geq m$. Indeed, since $\displaystyle S_{n}^{(j)} = \sum_{k=1}^{j} p_{n}^{(k)}$, $\displaystyle s_{m}^{(l)} = \sum_{k=1}^{l} p_{m}^{(k)} $, where the summands in each sum are mutually orthogonal, we get for each $k_1 \neq k_2, l$ in $\{1,\ldots,l\}$, $$p_{n}^{(k_1)} \leq p_{m}^{(k_1)} \perp p_{m}^{(k_2)}, p^{(l)}_{m}.$$ On the other hand, $p_n^{(k_1)}$ has been chosen to be orthogonal to $S^{(k_2)}_{m}$ for every $k_1,k_2,n,$ and $m$ in $\mathbb{N}$ with $k_1>k_2$, $m\geq n_{k_2}$. Therefore, $$\J {S_{m}^{(l)}}{S_{n}^{(j)}}{S_{m}^{(l)}} = 2 (S_{m}^{(l)}\circ S_{n}^{(j)}) \circ S_{m}^{(l)} - S_{m}^{(l)} \circ S_{n}^{(j)} = \sum_{k=1}^{l} p_{n}^{(k)} = (S_{m}^{(l)})^2 \circ S_{n}^{(j)}, $$ proving the claim.\smallskip

By Lemma \ref{l 2.4}$(a)$ and $(\ref{eq induction two})$, we can find strictly increasing sequences $(l_j)_j$ and $(k_j)_j$ in $\mathbb{N}$ with $k_j > n_j$, $\displaystyle\|S_{k_j}^{(j)}\|_{\phi_0} <\frac{1}{2^j}$ and $$\displaystyle\alpha \left(1-\frac{1}{2^{m_j-2}}\right) < \| \varphi_{l_j} Q(S_{k_j}^{(j)})\|.$$ The projections in the sequence $(S_{k_j}^{(j)})$ mutually operator commute, so defining $\widehat{p}_{j} := \bigvee_{m\geq j} S_{k_m}^{(m)}$ we get a decreasing sequence of projections in $M$ which, by Lemma \ref{l operator commuting supreme and sum}, is strong$^*$-null. \smallskip

We define $\phi_j := \varphi_{l_j}$.  Then $$\| \phi_j Q(\widehat{p}_j)\| = \phi_j (\widehat{p}_j) \geq \phi_j (S_{k_j}^{(j)}) = \| \phi_j Q (S_{k_j}^{(j)}) \| >\alpha \left(1-\frac{1}{2^{m_j-2}}\right),$$ which implies that $$\lim_{n} \sup_{k} \| \phi_k Q(\widehat{p}_n)\| \geq\liminf_{n} \| \phi_n Q(\widehat{p}_n)\| \geq \liminf_{n} \alpha \left(1-\frac{1}{2^{m_n-2}}\right) = \alpha.$$ Finally, $(\phi_j)$ being a subsequence of $(\varphi_{k})$ forces to
$$\alpha\leq \sup\left\{\lim_{n}\sup_{k\in \mathbb{N}} \|\phi_k Q(p_n)\|: (p_n)\in \mathcal{D} \right\} \leq \alpha,$$ contradicting the assumption made at the beginning of the proof.
\end{proof}

\begin{remark}
{ It should be notice here that in the statement of Lemma \ref{l 3.4}$(b)$ {\rm(}also in the corresponding statement of Lemma \ref{l 2.4}$(b)${\rm)}, we can not affirm, in general, that the whole sequence $\varphi_{k}$ satisfies the conclusion of the statement. A similar restriction applies to Lemma 2.4 in \cite{Randri}. So, a subtle difficulty appears at a certain stage of the proof of \cite[Theorem 3.1]{Randri}. Concretely, when in the just quoted paper, Proposition 3.4 is applied at the beginning of page 146 (even combined with Lemma 2.4), we can only guarantee that a subsequence of $(\varphi_n)$ fulfills the statement. Our proof extends the results to the setting of JBW$^*$-algebras and provides an argument to avoid the just quoted difficulties.}
\end{remark}


We isolate a property which was already implicitly stated in the results established above.

\begin{proposition}\label{p peirce 1 0 increase the measure of non-rwc} Suppose that $M$ is a JBW$^*$-algebra admitting a faithful normal functional. Let $(\varphi_k)$ be a bounded sequence of positive functionals in $M_*$ and let $(p_n)$ be a sequence in $\mathcal{D}$. Then for each $n_0$ in $\mathbb{N}$ the set $\displaystyle \left\{ \varphi_k \left(P_1 (p_{n_0}) + P_0 (p_{n_0})\right): k\in \mathbb{N} \right\}$ is not relatively weakly compact if, and only if, there exists a sequence $(q_n)$ in $\mathcal{D}$ satisfying $q_n\perp p_n$ for every $n\geq n_0$ and $\displaystyle  \lim_{ n\to\infty}\sup_{k} \|\varphi_k Q(q_n)\|>0.$
\end{proposition}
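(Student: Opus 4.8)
The plan is to fix $p:=p_{n_0}$ and $q:=1-p_{n_0}$, abbreviate $R:=P_1(p)+P_0(p)=I-Q(p)$, and reduce both sides of the equivalence to a single statement about the restricted \emph{positive} functionals $\psi_k:=\varphi_k|_{N}$, where $N:=M_0(p)=Q(q)(M)$ is the Peirce-$0$ JBW$^*$-subalgebra of $M$ (with unit $q$). There is no loss in assuming $\|\varphi_k\|\leq 1$. The central claim I would establish is that $\{\varphi_k R:k\in\mathbb{N}\}$ is relatively weakly compact in $M_*$ if and only if $\{\psi_k:k\in\mathbb{N}\}$ is relatively weakly compact in $N_*$, together with the observation that the sequence condition on the right-hand side of the Proposition is exactly the negation of the latter.

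First I would record the easy equivalence concerning the sequences $(q_n)$. If $(q_n)\in\mathcal D$ satisfies $q_n\perp p_n$ for all $n\geq n_0$, then, since $(q_n)$ is decreasing, $q_{n}\leq q_{n_0}\leq q$ for every $n\geq n_0$; hence each $q_n$ lies in $N$, and conversely any $(q_n)\in\mathcal D$ with $q_n\in N$ satisfies $q_n\perp p\geq p_n$. For such $q_n$ the Peirce rules give $Q(p)Q(q_n)=0$ (because $Q(q_n)$ maps $M$ into $M_0(p)$ and $Q(p)M_0(p)=0$), whence $\varphi_k R\,Q(q_n)=\varphi_k Q(q_n)=\psi_k Q(q_n)$ and $\|\varphi_k Q(q_n)\|=\psi_k(q_n)$. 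Applying Proposition \ref{p equiv rwc and uniform integrable} inside $N$ (the $\mathcal D$-sequences of $N$ are precisely the $\mathcal D$-sequences of $M$ lying in $N$, by Bunce's coincidence of the strong$^*$-topologies), the existence of a sequence $(q_n)$ as in the statement is equivalent to $\{\psi_k\}$ being \emph{not} relatively weakly compact in $N_*$. This identity $\varphi_k R\,Q(q_n)=\varphi_k Q(q_n)$ also settles at once the implication ``such $(q_n)$ exists $\Rightarrow\{\varphi_k R\}$ not relatively weakly compact'', through Proposition \ref{p equiv rwc and uniform integrable} $(a)\Leftrightarrow(d)$.

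For the central claim one inclusion is soft: the restriction map $M_*\to N_*$ is bounded linear, hence weak-to-weak continuous, and sends $\varphi_k R$ to $\psi_k$ (since $R$ restricts to the identity on $N$), so relative weak compactness of $\{\varphi_k R\}$ passes to $\{\psi_k\}$. The substantive converse is the part I expect to be the main obstacle. Here I would first transport relative weak compactness from $N_*$ to $M_*$: the map $\Lambda\colon N_*\to M_*$, $\Lambda\psi=\psi\,P_0(p)$, is isometric (as $P_0(p)$ is weak$^*$-continuous and maps $B(M)$ onto $B(N)$), hence a weak-to-weak homeomorphism onto its range, and $\Lambda\psi_k=\varphi_k Q(q)$; therefore $\{\psi_k\}$ relatively weakly compact forces $\{\varphi_k Q(q)\}$ to be relatively weakly compact in $M_*$. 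The key algebraic device is the identity $R=2\,Q(1,q)-Q(q)$, valid because $M_1(p)=M_1(q)$ and $P_1(q)=2(Q(1,q)-Q(q))$, which re-expresses the Peirce-$1$ ``off-diagonal'' part of $R$ through the complementary projection $q$. Testing against an arbitrary $(r_n)\in\mathcal D$ and invoking Lemma \ref{l hidden 1}$(b)$ with $T=Q(r_n)$ and with $q$ in place of $p$, I would obtain
$$\sup_k\big\|\varphi_k R\,Q(r_n)\big\|\ \leq\ 4\Big(\sup_k\big\|\varphi_k Q(q)\,Q(r_n)\big\|\Big)^{\frac12}+\sup_k\big\|\varphi_k Q(q)\,Q(r_n)\big\|.$$
Since $\{\varphi_k Q(q)\}$ is relatively weakly compact, Proposition \ref{p equiv rwc and uniform integrable} $(a)\Rightarrow(d)$ drives the right-hand side to $0$; as $(r_n)\in\mathcal D$ was arbitrary, Proposition \ref{p equiv rwc and uniform integrable} $(d)\Rightarrow(a)$ gives that $\{\varphi_k R\}$ is relatively weakly compact.

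Combining the two reductions yields the Proposition: $\{\varphi_k R\}$ is not relatively weakly compact if and only if $\{\psi_k\}$ is not relatively weakly compact in $N_*$, which in turn holds if and only if a sequence $(q_n)$ as described exists. The single genuinely analytic point is the displayed estimate, whose success rests on rewriting $R$ via the complementary projection so that Lemma \ref{l hidden 1}$(b)$ dominates the Peirce-$1$ contribution by the Peirce-$0$ contribution carried by $\psi_k$; all remaining steps are formal consequences of the Peirce calculus and of the weak-compactness/uniform-integrability dictionary furnished by Proposition \ref{p equiv rwc and uniform integrable}.
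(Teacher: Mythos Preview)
Your proposal is correct and takes a genuinely different route from the paper's argument. The paper proceeds directly: assuming $\{\varphi_k(P_1(p_{n_0})+P_0(p_{n_0}))\}$ is not relatively weakly compact, it picks a witnessing sequence $(e_n)\in\mathcal{D}$, uses Lemma~\ref{l hidden 1}$(a)$ and $(c)$ to obtain $\liminf_n\sup_k\|\varphi_k Q(a_n)\|>0$ for the (non-projection) positive elements $a_n:=Q(1-p_{n_0})(e_n)$, and then invokes Lemma~\ref{l 3.4}$(a)$ to upgrade $(a_n)$ to a sequence of projections $(q_n)\in\mathcal{D}$ below $1-p_{n_0}$. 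Your reduction to the Peirce-$0$ subalgebra $N=M_0(p_{n_0})$ replaces this two-step construction: once you establish the equivalence ``$\{\varphi_kR\}$ r.w.c.\ in $M_*$ $\Leftrightarrow$ $\{\psi_k\}$ r.w.c.\ in $N_*$'' (via the estimate from Lemma~\ref{l hidden 1}$(b)$), the required projections come for free from Proposition~\ref{p equiv rwc and uniform integrable} applied inside $N$. This is cleaner in that it bypasses Lemma~\ref{l 3.4} entirely --- and with it the hypothesis that $M$ admits a faithful normal state, which the paper needs precisely to run that lemma. One cosmetic point: since $Q(a)$ is conjugate-linear in this paper's conventions, your identity should read $R\,Q(1)=2Q(1,q)-Q(q)$ (equivalently $R=(2Q(1,q)-Q(q))Q(1)$), and the relatively weakly compact set in $M_*$ is $\{\varphi_k P_0(p)\}=\{\varphi_kQ(q)^2\}$ rather than $\{\varphi_kQ(q)\}$; this does not affect any of the norm estimates, and the paper itself writes $T=Q(a)$ in Lemma~\ref{l hidden 1}$(c)$ with the same informality.
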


\begin{proof} Suppose there exists a sequence $(q_n)$ satisfying the conditions of the statement. Since $(q_n)\subseteq P_0 (p_{n_0}) (M),$ the sufficient condition is clear by Proposition \ref{p equiv rwc and uniform integrable}. To prove the reciprocal implication assume that the set  $\displaystyle \left\{ \varphi_k \left(P_1 (p_{n_0}) + P_0 (p_{n_0})\right): k\in \mathbb{N} \right\}$ is not relatively weakly compact. It is not restrictive to assume that $\|\varphi_k\| \leq 1$, for every $k$. By Proposition \ref{p equiv rwc and uniform integrable}, there exist $(e_n)$ in $\mathcal{D}$ such that $$\lim_{n\to \infty} \sup_{k} \left\| \varphi_k \left(P_1 (p_{n_0}) + P_0 (p_{n_0})\right) Q (e_n) \right\| >0.$$ It is easy to check that $\varphi_k \left(P_1 (p_{n_0}) + P_0 (p_{n_0})\right) = 2 \varphi_k Q(p_{n_0},1-p_{n_0}) Q(1)+ \varphi_k Q(1-p_{n_0})Q(1)$, and hence, by Lemma \ref{l hidden 1}$(a)$ and $(c)$, $$\| \varphi_k \left(P_1 (p_{n_0}) + P_0 (p_{n_0})\right) Q(e_n) \| \leq  2 \|\varphi_k Q(p_{n_0},1-p_{n_0}) Q(e_n)\|$$ $$+ \| \varphi_k Q(1-p_{n_0}) Q(e_n)\| \leq 6 \|\varphi_k Q(Q(1-p_{n_0})(e_n))\|^{\frac14} + \| \varphi_k Q(Q(1-p_{n_0})(e_n))\|^{\frac12} $$ $$ \leq 7 \|\varphi_k Q(Q(1-p_{n_0})(e_n))\|^{\frac14}.$$ Therefore, denoting $a_n := Q(1-p_{n_0})(e_n)$, it follows that $(a_n)$ is a strong$^*$-null decreasing sequence with $0\leq a_n\leq 1-p_{n_0}$ and $\displaystyle \liminf_{n} \sup_{k} \|\varphi_k Q(a_n)\| >0.$ Applying Lemma \ref{l 3.4} we find a sequence $(q_n)$ in $\mathcal{D}$ such that $q_n \leq r(a_1) \leq 1-p_{n_0}$ {\rm($\forall n$)} and $\displaystyle \lim_{n\to \infty} \sup_{k} \| \varphi_{k} Q(q_n) \| >0$.
\end{proof}

Our next result is a first Kadec-Pelczy{\'n}ski dichotomy-type theorem type for bounded sequences of positive functionals in the predual of a JBW$^*$-algebra.

\begin{theorem}\label{c KP dichotomy for positive sequences} Let $(\varphi_n)$ be a bounded sequence of positive functionals in the predual of a JBW$^*$-algebra $M$. Then there exist a subsequence $(\varphi_{n_k})$ and a decreasing strong$^*$-null sequence of projections $(q_n)$ in $M$ such that \begin{enumerate}[$(a)$]
\item the set $\{\varphi_{n_k} - \varphi_{n_k} P_{2} (q_k): k\in \mathbb{N}\}$ is relatively weakly compact,
\item $\varphi_{n_k}=\phi_k+\psi_k$, with $\phi_k := \varphi_{n_k} - \varphi_{n_k} P_{2} (q_k)$, $\psi_k := \varphi_{n_k} P_{2} (q_k),$ $\phi_k Q(q_k)= 0$ and $\psi_k Q(q_k)^2 = \psi_k$, for every $k$.
\end{enumerate}
\end{theorem}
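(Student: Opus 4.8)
The plan is to feed the bounded sequence into Theorem~\ref{t 3.1}, read off the projections that carry the ``escaping mass'', and then check that once this mass is removed the remainder is uniformly integrable. If $\{\varphi_n:n\in\mathbb{N}\}$ is already relatively weakly compact I take $q_n=0$ for all $n$, so that $\psi_k=0$, $\phi_k=\varphi_k$, and both $(a)$ and $(b)$ hold trivially; so I assume it is not. Theorem~\ref{t 3.1} then provides a subsequence (relabelled $(\varphi_k)$), a projection $q_0$ such that $M_0=Q(q_0)(M)$ carries a faithful normal state $\phi_0$, and a sequence $(q_n)\in\mathcal{D}$ with $(q_n)\subset M_0$ attaining
\[
\alpha=\sup\Big\{\lim_n\sup_k\|\varphi_kQ(p_n)\|:(p_n)\in\mathcal{D}\Big\}=\lim_n\sup_k\|\varphi_kQ(q_n)\|>0,
\]
where $\|\varphi_kQ(q_n)\|=\varphi_k(q_n)$ because $\varphi_k\ge 0$; moreover $(\varphi_k)$ is supported by $M_0$ (Remark~\ref{r support control functional for sequences}), so I may and do work inside $M_0$, which admits a faithful normal state. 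Applying Lemma~\ref{l 2.4}$(b)$ to $(\varphi_k)$ and $(q_n)$ I extract a further subsequence $(\varphi_{n_k})$ and a decreasing strong$^*$-null sequence of projections — again written $(q_k)$ — with $\varphi_{n_k}(q_k)\to\alpha$.

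With this data I set $\psi_k:=\varphi_{n_k}P_2(q_k)$ and $\phi_k:=\varphi_{n_k}-\psi_k=\varphi_{n_k}\bigl(P_1(q_k)+P_0(q_k)\bigr)$, as forced by the statement. The two algebraic identities in $(b)$ are then automatic and need no estimate: $Q(q_k)(M)=M_2(q_k)$ is annihilated by $P_1(q_k)$ and $P_0(q_k)$, giving $\phi_kQ(q_k)=0$; and \eqref{basic equation} yields $Q(q_k)^3=Q(Q(q_k)q_k)=Q(q_k)$, so the linear operator $Q(q_k)^2$ is an idempotent, namely $P_2(q_k)$, whence $\psi_kQ(q_k)^2=\varphi_{n_k}P_2(q_k)^2=\psi_k$. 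Thus the whole content of the theorem is the relative weak compactness required in $(a)$.

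The core step is to show that for each fixed $n_0$ the set $K_{n_0}:=\{\varphi_k(P_1(q_{n_0})+P_0(q_{n_0})):k\in\mathbb{N}\}$ is relatively weakly compact. I argue by contradiction: if some $K_{n_0}$ fails, Proposition~\ref{p peirce 1 0 increase the measure of non-rwc} produces $(r_n)\in\mathcal{D}$ with $r_n\perp q_n$ for $n\ge n_0$ and $\lim_n\sup_k\|\varphi_kQ(r_n)\|=\gamma>0$. This orthogonal mass should be \emph{added} to the mass already carried by $(q_n)$. Using Lemma~\ref{l 2.4}$(b)$ I realise $\alpha$ as a genuine diagonal limit $\varphi_{k_i}(q_{a_i})\to\alpha$ along a subsequence, and since discarding terms can only raise a lower limit I retain $\liminf_i\varphi_{k_i}(r_{b_i})\ge\gamma$ along the same indices. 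Choosing the indices deep enough makes $r_{b_i}\perp q_{a_i}$, so $t_i:=q_{a_i}+r_{b_i}$ is a projection; padding with the unit and invoking Lemma~\ref{l operator commuting supreme and sum} keeps $(t_i)$ decreasing and strong$^*$-null, hence in $\mathcal{D}$. Because one summand converges,
\[
\liminf_i\varphi_{k_i}(t_i)=\lim_i\varphi_{k_i}(q_{a_i})+\liminf_i\varphi_{k_i}(r_{b_i})\ge\alpha+\gamma,
\]
while every member of $\mathcal{D}$ forces $\liminf_i\varphi_{k_i}(t_i)\le\alpha$ by the maximality of $\alpha$; this contradiction proves each $K_{n_0}$ relatively weakly compact. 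This combination of orthogonal masses — in particular the \emph{simultaneous} realisation of the two limits — is the main obstacle; it is exactly the point flagged in the remark following Theorem~\ref{t 3.1}, and parallels the accumulation step \eqref{eq induction three} there.

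Finally I deduce $(a)$ from the fixed-scale statement by a norm-perturbation argument. For $k\ge k_0$ the Peirce projection $P_2(q_k)$ lies below $P_2(q_{k_0})$, so
\[
\phi_k=\varphi_{n_k}\bigl(P_1(q_{k_0})+P_0(q_{k_0})\bigr)+\varphi_{n_k}\bigl(P_2(q_{k_0})-P_2(q_k)\bigr),
\]
where the first summand belongs to the relatively weakly compact set $K_{k_0}$ and the second is a positive functional of norm $\varphi_{n_k}(q_{k_0})-\varphi_{n_k}(q_k)$. Since $\varphi_{n_k}(q_k)\to\alpha$ while $\varphi_{n_k}(q_{k_0})\le\sup_j\varphi_{n_j}(q_{k_0})$, and the latter decreases to $\alpha$ as $k_0\to\infty$ (Lemma~\ref{l 2.4}$(a)$), for every $\varepsilon>0$ I can fix $k_0$ with $\|\varphi_{n_k}(P_2(q_{k_0})-P_2(q_k))\|\le\varepsilon$ for all large $k$. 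Thus, up to finitely many terms and an eventual $\varepsilon$-perturbation in norm, $\{\phi_k\}$ sits inside the uniformly integrable set $K_{k_0}$; feeding this into the criterion of Proposition~\ref{p equiv rwc and uniform integrable} gives $\limsup_n\sup_k\|\phi_kQ(a_n)\|\le\varepsilon$ for every strong$^*$-null bounded $(a_n)$, and as $\varepsilon$ is arbitrary $\{\phi_k\}$ is uniformly integrable, hence relatively weakly compact, establishing $(a)$.
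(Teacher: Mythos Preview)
Your overall architecture is the same as the paper's: invoke Theorem~\ref{t 3.1}, use Proposition~\ref{p peirce 1 0 increase the measure of non-rwc} to show each $K_{n_0}$ is relatively weakly compact by an ``added orthogonal mass'' contradiction, and then deduce $(a)$. The middle step is essentially right (though your invocation of Lemma~\ref{l operator commuting supreme and sum} is spurious; the sequence $t_i=q_{a_i}+r_{b_i}$ is decreasing and strong$^*$-null directly, once both index sequences are strictly increasing and all indices exceed $n_0$). The real problem is in your last paragraph.

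You claim that $\varphi_{n_k}\bigl(P_2(q_{k_0})-P_2(q_k)\bigr)$ is a \emph{positive} functional of norm $\varphi_{n_k}(q_{k_0})-\varphi_{n_k}(q_k)$. This is false: $P_2(q_{k_0})-P_2(q_k)$ is \emph{not} a positive map (already in $M_2(\mathbb{C})$ with $q_{k_0}=1$, $q_k=e_{11}$ one checks that $x\mapsto x-e_{11}xe_{11}$ fails positivity), so the norm of the error term is not controlled by its value at $1$. What is true is the decomposition
\[
P_2(q_{k_0})-P_2(q_k)=U_{q_{k_0}-q_k}+2\,U_{q_{k_0}-q_k,\,q_k},
\]
and then Lemma~\ref{l hidden 1} (Cauchy--Schwarz) bounds the cross term by a power of $\varphi_{n_k}(q_{k_0}-q_k)$; so your perturbation idea can be repaired, but only via exactly the kind of estimate you omitted. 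The paper sidesteps this entirely: instead of a norm-perturbation, it applies Lemma~\ref{l 2.3} to reduce $(a)$ to the single condition $\lim_n\sup_k\|\phi_kQ(q_n)\|=0$, and then runs a contradiction argument that already has Lemma~\ref{l hidden 1} doing the work (bounding $\|\varphi Q(q_{\sigma(n)}-q_{\tau\sigma_1(n)},q_{\tau\sigma_1(n)})\|$ by $\|\varphi Q(q_{\sigma(n)}-q_{\tau\sigma_1(n)})\|^{1/4}$). Either route needs that estimate; yours just hides it behind an incorrect positivity claim.
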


\begin{proof} We may assume, without loss of generality, that $\|\varphi_n\|\leq 1$ and the set $\{\varphi_n: n\in \mathbb{N}\}$ is not relatively weakly compact. By Theorem \ref{t 3.1} (see also Proposition \ref{p equiv rwc and uniform integrable}) there exists a projection $q_0$ in $M$ such that $M_0= Q(q_0) (M)$ is a JBW$^*$-algebra which admits a faithful normal state, $(\varphi_k)$ is supported by $M_0$ with the following property: there exists a sequence $(q_n)\subseteq M_0$ in $\mathcal{D}$ and a subsequence $(\varphi_{\tau(k)})$ satisfying \begin{equation}\label{eq 1 first KP dichotomy}\sup\left\{\lim_{n}\sup_{k\in \mathbb{N}} \|\varphi_{\tau(k)} Q(r_n)\|: (r_n)\in \mathcal{D} \right\} = \lim_{n}\sup_{k\in \mathbb{N}} \|\varphi_{\tau(k)} Q(q_n)\|
\end{equation}
$$= \sup\left\{\lim_{n}\sup_{k\in \mathbb{N}} \|\varphi_{\tau(k)} Q(r_n)\|: (r_n)\in \mathcal{D}, (r_n)\subseteq M_0 \right\}= \alpha>0.$$

Set $\phi_k := \varphi_{\tau(k)} - \varphi_{\tau(k)} P_{2} (q_k)$ and $\psi_k := \varphi_{\tau(k)} P_{2} (q_k).$ Fix $k_0$ in $\mathbb{N}$. Clearly, for each $k\geq k_0$, $\phi_k \left(P_{0} (q_{k_0})+P_{1} (q_{k_0})\right) = \varphi_{\tau(k)} \left(P_{0} (q_{k_0})+P_{1} (q_{k_0})\right).$ By Proposition \ref{p peirce 1 0 increase the measure of non-rwc}, the set $$\left\{ \phi_{k} \left(P_1 (q_{k_0}) + P_0 (q_{k_0})\right): \begin{array}{c}
                                                                           k\in \mathbb{N} \\
                                                                           k\geq k_0
                                                                         \end{array} \right\} = \left\{ \varphi_{\tau(k)} \left(P_1 (q_{k_0}) + P_0 (q_{k_0})\right): \begin{array}{c}
                                                                           k\in \mathbb{N} \\
                                                                           k\geq k_0
                                                                         \end{array}
 \right\}$$ is not relatively weakly compact if, and only if, there exists a sequence $(\widetilde{q}_n)\subseteq M_0$ in $\mathcal{D}$ satisfying $\widetilde{q}_k\perp q_k$ for every $k\geq k_0$ and $$  \lim_{ n\to\infty}\sup_{k} \|\varphi_{\tau(k)} Q(\widetilde{q}_n)\|= \alpha_1 >0.$$ The sequence $(\widetilde{r}_n)_{n\geq k_0}= (q_n+\widetilde{q}_n)_{n\geq k_0}\subseteq M_0$ lies in $\mathcal{D}$ and $$\lim_{k_0\leq n\to \infty}\sup_{k\in \mathbb{N}} \|\varphi_{\tau(k)} Q(\widetilde{r}_n)\| \geq \lim_{k_0\leq n\to \infty}\sup_{k\in \mathbb{N}} \|\varphi_{\tau(k)} Q({q}_n)\|$$ $$ + \lim_{k_0\leq n\to \infty}\sup_{k\in \mathbb{N}}  \|\varphi_{\tau(k)} Q(\widetilde{q}_n)\|\geq \alpha + \alpha_1 > \alpha,$$ contradicting $(\ref{eq 1 first KP dichotomy})$. Therefore the set $\left\{ \phi_{k} \left(P_1 (q_{k_0}) + P_0 (q_{k_0})\right): k\in \mathbb{N} \right\}$ is relatively weakly compact. It follows, by Lemma \ref{l 2.3}, that the set  $\{\phi_k: k\in \mathbb{N}\}$ is relatively weakly compact if, and only if, $\displaystyle\exists\lim_{n\to\infty} \sup_{k} \|\phi_{k} Q(q_n)\|=0.$ In order to finish the proof, we shall show that the latter limit exists and is zero.\smallskip

Suppose, on the contrary, that there exists $\theta>0$ and a subsequence $(q_{\sigma(n)})$ satisfying $\displaystyle\sup_{k} \|\phi_{k} Q(q_{\sigma(n)})\| \geq \theta,$ and $\sigma(n)>\tau(n),$ for every positive integer $n.$ Since $\lim_{n}\sup_{k} \|\varphi_{\tau(k)} Q(q_{\sigma (n)})\| = \alpha >0,$ by Lemma \ref{l 2.4}$(b)$, we can find subsequences $(\varphi_{\eta\tau(k)})$ and $(q_{\kappa\sigma(n)})$ such that $\exists \lim_{n} \|\varphi_{\eta\tau(n)} Q(q_{\kappa\sigma(n)})\| = \alpha$. To simplify notation, we assume that $(\varphi_{\eta\tau(k)})=(\varphi_{\tau(k)})$ and $(q_{\kappa\sigma(n)})=(q_{\sigma(n)})$, and hence \begin{equation}\label{eq 3 corollary poositive}\exists \lim_{n} \|\varphi_{\tau(n)} Q(q_{\sigma(n)})\| = \alpha.
\end{equation}

We observe that, for each $n\geq k$, $P_2 (q_k) Q(q_n)= Q(q_n)$, thus $\phi_k Q(q_n)= \varphi_{\tau(k)} Q(q_n) - \varphi_{\tau(k)} P_2 (q_k) Q(q_n) =0, $ which shows that $$\sup_{k} \|\phi_{k} Q(q_{\sigma(n)})\| = \sup_{k>{\sigma(n)}} \|\phi_{k} Q(q_{\sigma(n)})\|.$$ We can therefore find a subsequence $(\phi_{\sigma_1(k)})$ such that $\sigma_1(n)>\sigma(n),$ and $\|\phi_{\sigma_1(n)} Q(q_{\sigma(n)})\|\geq \theta>0,$ for every $n\in \mathbb{N}.$ Writing $$\phi_{\sigma_1(n)} Q(q_{\sigma(n)}) =  \varphi_{\tau\sigma_1(n)} Q(q_{\sigma(n)})- \varphi_{\tau\sigma_1(n)}  Q(q_{\tau\sigma_1(n)})^2 Q(q_{\sigma(n)}) $$ $$= \varphi_{\tau\sigma_1(n)} Q(q_{\sigma(n)})- \varphi_{\tau\sigma_1(n)} Q(q_{\tau\sigma_1(n)})$$ $$= \varphi_{\tau\sigma_1(n)} Q(q_{\sigma(n)}-q_{\tau\sigma_1(n)}) + 2 \varphi_{\tau\sigma_1(n)} Q(q_{\sigma(n)}-q_{\tau\sigma_1(n)},q_{\tau\sigma_1(n)}). $$

We deduce from Lemma \ref{l hidden 1}$(a)$ and $(c)$ that \begin{equation}\label{eq 2 corollary poositive}\theta \leq \|\phi_{\sigma_1(n)} Q(q_{\sigma(n)})\|\leq  \left\| \varphi_{\tau\sigma_1(n)} Q(q_{\sigma(n)}-q_{\tau\sigma_1(n)})\right\|
 \end{equation} $$+2 \left\| \varphi_{\tau\sigma_1(n)} Q(q_{\sigma(n)}-q_{\tau\sigma_1(n)},q_{\tau\sigma_1(n)})\right\|$$ $$\leq 7 \left\| \varphi_{\tau\sigma_1(n)} Q(q_{\sigma(n)}-q_{\tau\sigma_1(n)})\right\|^{\frac14}.$$ Having in mind that $q_{\sigma\sigma_1(n)}\leq q_{\sigma_1(n)}\leq q_{\sigma(n)},$ we have $$\alpha \geq \liminf_{n} \|\varphi_{\tau(\sigma_1(n))} Q(q_{\sigma (n)})\| $$ $$\geq \liminf_{n} \|\varphi_{\tau\sigma_1(n)} Q(q_{\tau\sigma_1 (n)})\| + \|\varphi_{\tau\sigma_1(n)} Q(q_{\sigma(n)}-q_{\tau\sigma_1 (n)})\|$$ $$\hbox{(by  $(\ref{eq 2 corollary poositive})$)} \geq \liminf_{n} \|\varphi_{\tau\sigma_1(n)} Q(q_{\tau\sigma_1 (n)})\| + \frac{\theta^4}{7^4} $$  $$\geq \liminf_{n} \|\varphi_{\sigma\sigma_1(n)} Q(q_{\tau\sigma_1 (n)})\| + \frac{\theta^4}{7^4} =\hbox{(by  $(\ref{eq 3 corollary poositive})$)}= \alpha + \frac{\theta^4}{7^4},$$ which is impossible.
\end{proof}

\section{A dichotomy-type theorem for general bounded sequences}

We recall that a functional $\omega$ in the predual of a JBW$^*$-algebra $M$ is said to be symmetric or hermitian
when $\omega (x^*) = \overline{\omega(x)}$, for every $x\in M$. Every hermitian functional $\omega$ in $M_*$ has a unique decomposition
(the Jordan decomposition) into $\omega = \omega^{+}- \omega^{-}$, where $\omega^{+}$ and
$\omega^{-}$ are two positive functionals in $M_*$ with $\|\omega\| = \| \omega^{+}\|  + \|\omega^{-}\|$ (compare,
for example, \cite[Proposition 4.5.3]{HancheStor}). Having in mind that each functional $\phi$ in $M_*$ has a unique decomposition
into $\omega_1 + i\omega_2$, where $\omega_1$ and $\omega_2$ are hermitian functionals, $\phi$ admits a canonical decomposition
$\phi=\phi^{1} - \phi^{2} + i(\phi^{3}- \phi^{4}),$ where each $\phi^{j}$ is
positive. The symbol $|\phi|$ will stand for the positive functional $\phi^{1} + \phi^{2} + \phi^{3} + \phi^{4},$ and we call $|\phi|$ the
absolute value of $\phi$.\smallskip

We shall require the following consequence of Lemma \ref{l hidden 1}, whose proof is left to the reader.

\begin{lemma}\label{l hidden 1 b} Let $p$ be a projection in a JBW$^*$-algebra $M$, and let $\phi$ be a (normal) functional on $M$ with $\|\phi\|\leq 1$.
Then the following inequalities hold for each $a$ in $M$ with $0\leq a\leq 1$: \begin{enumerate}[$(a)$]
\item $\displaystyle\|\phi Q(p) Q(a) \| \leq 4 \ \||\phi| Q(p) Q(a) Q(p)\|^{\frac12};$
\item $\displaystyle\left\|\phi Q(1,p) Q(a) \right\|^{2}\leq 64 \ \left\| |\phi| Q(p) Q(a) \right\|;$
\item $\displaystyle\|\phi Q(1-p,p) Q(a) \| \leq {8}\ \||\phi| Q(p) Q(a) Q(p)\|^{\frac14}+ 4 \ \||\phi| Q(p) Q(a) Q(p)\|^{\frac12}.\hfill\Box$
\end{enumerate}
\end{lemma}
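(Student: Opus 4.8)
The plan is to reduce all three inequalities to the positive-functional estimates of Lemma \ref{l hidden 1}, by exploiting the canonical decomposition $\phi = \phi^{1} - \phi^{2} + i(\phi^{3} - \phi^{4})$ recalled above, in which each $\phi^{j}$ is positive and $|\phi| = \phi^{1} + \phi^{2} + \phi^{3} + \phi^{4}$. Two preliminary observations make the reduction work. First, writing $\phi = \omega_{1} + i\omega_{2}$ with $\omega_{1},\omega_{2}$ hermitian, one has $\|\omega_{1}\|,\|\omega_{2}\| \leq \|\phi\| \leq 1$ (since $\omega_{1} = \frac12(\phi + \phi^{\sharp})$ with $\phi^{\sharp}(x):=\overline{\phi(x^{*})}$ and $\|\phi^{\sharp}\|=\|\phi\|$); as $\|\omega_{1}\| = \|\phi^{1}\| + \|\phi^{2}\|$ and $\|\omega_{2}\| = \|\phi^{3}\| + \|\phi^{4}\|$ by the Jordan decomposition, each $\phi^{j}$ is positive with $\|\phi^{j}\| \leq 1$, so Lemma \ref{l hidden 1} is applicable to every $\phi^{j}$. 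Second, for a projection $p$ and $0\leq a\leq 1$, the operator $Q(p)Q(a)Q(p) = Q(Q(p)(a))$ (by (\ref{basic equation})) is positive, as is the composition $Q(p)Q(a)$; hence whenever $\psi_{1}\leq \psi_{2}$ are positive functionals and $T$ is one of these operators, $\|\psi_{1} T\| = \psi_{1}(T(1)) \leq \psi_{2}(T(1)) = \|\psi_{2} T\|$. Applying this with $\psi_{1} = \phi^{j} \leq |\phi| = \psi_{2}$ lets one replace each $\phi^{j}$ by $|\phi|$ on the right-hand sides at no extra cost.

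For $(a)$ I would bound $\|\phi Q(p)Q(a)\| \leq \sum_{j=1}^{4}\|\phi^{j}Q(p)Q(a)\|$ by the triangle inequality, apply Lemma \ref{l hidden 1}$(a)$ to each summand to get $\|\phi^{j}Q(p)Q(a)\| \leq \|\phi^{j}Q(p)Q(a)Q(p)\|^{\frac12}$, and then dominate $\phi^{j}$ by $|\phi|$ using the monotonicity above. The four identical bounds sum to the factor $4$. For $(b)$ the same scheme applies with Lemma \ref{l hidden 1}$(b)$ (taking $T = Q(a)$ and noting $\|Q(a)\|\leq 1$ since $0\leq a\leq 1$), which gives $\|\phi^{j}Q(1,p)Q(a)\| \leq 2\|\phi^{j}Q(p)Q(a)\|^{\frac12} \leq 2\||\phi|Q(p)Q(a)\|^{\frac12}$; summing over $j$ produces $\|\phi Q(1,p)Q(a)\| \leq 8\||\phi|Q(p)Q(a)\|^{\frac12}$, and squaring yields the constant $64$.

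For $(c)$ I would use the bilinearity identity $Q(1-p,p) = Q(1,p) - Q(p)$ to split $\|\phi Q(1-p,p)Q(a)\| \leq \|\phi Q(1,p)Q(a)\| + \|\phi Q(p)Q(a)\|$. The second term is controlled directly by $(a)$, contributing $4\||\phi|Q(p)Q(a)Q(p)\|^{\frac12}$. For the first term I would run the intermediate estimate from the proof of Lemma \ref{l hidden 1}$(c)$ on each $\phi^{j}$, namely $\|\phi^{j}Q(1,p)Q(a)\| \leq 2\|\phi^{j}Q(p)Q(a)Q(p)\|^{\frac14}$, dominate by $|\phi|$, and sum over $j$ to obtain $8\||\phi|Q(p)Q(a)Q(p)\|^{\frac14}$. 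Adding the two contributions gives exactly the claimed inequality.

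I expect the only genuinely delicate point to be the bookkeeping that justifies passing from $\phi^{j}$ to $|\phi|$ in each right-hand side: one must ensure the operator sitting between the functional and its evaluation at $1$ is positive, so that $\psi \mapsto \|\psi T\| = \psi(T(1))$ is monotone on positive functionals. This is precisely where (\ref{basic equation}) and the positivity of $Q(p)$ and $Q(a)$ enter. Everything else is a routine propagation of the estimates of Lemma \ref{l hidden 1} through the four-term decomposition, with careful tracking of the constants $4$, $8$ and $64$.
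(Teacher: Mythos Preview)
Your proposal is correct and follows precisely the route the paper intends: the authors state the lemma as a direct consequence of Lemma~\ref{l hidden 1} via the canonical decomposition $\phi=\phi^{1}-\phi^{2}+i(\phi^{3}-\phi^{4})$ and leave the details to the reader, and your argument supplies exactly those details. The only point worth double-checking (and you do flag it) is the monotonicity step $\|\phi^{j}T\|\leq \||\phi|\,T\|$ for $T\in\{Q(p)Q(a),\,Q(p)Q(a)Q(p)\}$: in both cases the composite sends $1$ to a positive element ($Q(p)(a^{2})$, respectively $(Q(p)a)^{2}$) and, for a positive functional $\psi$, $\|\psi T\|=\psi(T(1))$, so the inequality is immediate from $\phi^{j}\leq |\phi|$; this is just the computation the paper itself uses repeatedly (e.g.\ before Corollary~\ref{c l 3.3}).
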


A Kadec-Pelczy{\'n}ski dichotomy-type theorem for bounded sequences in the predual of a JBW$^*$-algebra can be stated now.

\begin{theorem}\label{t KP dichotomy for general sequences 1} Let $(\phi_n)$ be a bounded sequence of functionals in the predual of a JBW$^*$-algebra $M$. Then there exist a subsequence $(\phi_{n_k})$, bounded sequences $(\xi_k)$ and $(\psi_k)$ in $M_*$ and a decreasing strong$^*$-null sequence of projections $(q_n)$ in $M$ such that $\phi_{n_k}=\xi_k+\psi_k$, $\xi_k Q(q_k)= 0$, $\psi_k Q(q_k)^2 = \psi_k$ and the set $\{\xi_k: k\in \mathbb{N}\}$ is relatively weakly compact.
\end{theorem}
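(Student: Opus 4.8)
The plan is to reduce the general statement to the positive case already settled in Theorem \ref{c KP dichotomy for positive sequences}, applied to the bounded sequence of absolute values $(|\phi_n|)$. This produces a subsequence, which we relabel through the indices $n_k$, together with a decreasing strong$^*$-null sequence of projections $(q_n)$ in $M$ for which the set $\{\,|\phi_{n_k}|-|\phi_{n_k}|P_2(q_k):k\in\mathbb{N}\,\}$ is relatively weakly compact. With this $(q_n)$ in hand I would set $\xi_k:=\phi_{n_k}-\phi_{n_k}P_2(q_k)$ and $\psi_k:=\phi_{n_k}P_2(q_k)$, so that $\phi_{n_k}=\xi_k+\psi_k$, both sequences being bounded because the Peirce projections are contractive. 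The identities $\xi_k Q(q_k)=0$ and $\psi_k Q(q_k)^2=\psi_k$ are then purely formal consequences of the Peirce calculus relative to $q_k$: since $Q(q_k)$ maps $M$ into $M_2(q_k)$ we have $P_2(q_k)Q(q_k)=Q(q_k)$, and since $Q(q_k)^2=P_2(q_k)$ is idempotent, the two relations follow exactly as in the proof of Theorem \ref{c KP dichotomy for positive sequences}, independently of any positivity of $\phi_{n_k}$.

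The entire content therefore lies in proving that $K=\{\xi_k:k\in\mathbb{N}\}$ is relatively weakly compact, and here I would invoke Lemma \ref{l 2.3} with $p_n=q_n$. It suffices to check, for each $k_0$, that the families $\xi_k P_1(q_{k_0})$ and $\xi_k P_0(q_{k_0})$ are relatively weakly compact and that $\lim_n\sup_k\|\xi_k Q(q_n)\|=0$. For the first point I note that, since $(q_n)$ is decreasing, $q_k\le q_{k_0}$ for $k\ge k_0$, whence the Peirce rules give $P_2(q_k)P_j(q_{k_0})=0$ for $j=0,1$; consequently $\xi_k P_j(q_{k_0})=\phi_{n_k}P_j(q_{k_0})$ for all $k\ge k_0$, and the combined Peirce-$0,1$ family is handled through Proposition \ref{p peirce 1 0 increase the measure of non-rwc}, exactly as in the positive setting, with the separate rwc of the $P_0$- and $P_1$-images following by composing with the contractive maps $P_0(q_{k_0})^*$ and $P_1(q_{k_0})^*$. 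The passage from the positive functionals $|\phi_{n_k}|$ to the general $\phi_{n_k}$ is carried out by the domination inequalities of Lemma \ref{l hidden 1 b}, which bound $\|\phi Q(p)Q(a)\|$, $\|\phi Q(1,p)Q(a)\|$ and $\|\phi Q(1-p,p)Q(a)\|$ by fractional powers of the corresponding quantities for $|\phi|$; applying these with $p=q_k$ and $p=1-q_{k_0}$ converts the uniform integrability of $\{\,|\phi_{n_k}|-|\phi_{n_k}|P_2(q_k)\,\}$, supplied by Proposition \ref{p equiv rwc and uniform integrable}, into the estimates required for $\xi_k$.

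The main obstacle is precisely this transfer, and the reason it cannot be done in a single line is that $\mathrm{Id}-P_2(q_k)=P_1(q_k)+P_0(q_k)$ is not a positive map, so there is \emph{no} pointwise domination $|\xi_k|\le|\phi_{n_k}|\,(P_1(q_k)+P_0(q_k))$ that would let one quote the elementary bound $|\xi_k(p)|\le|\xi_k|(p)$ on projections together with Theorem \ref{th weak compact char alg}$(iv)$ directly. Instead the off-diagonal Peirce-$1$ contribution must be controlled quantitatively, and I expect the verification of $\lim_n\sup_k\|\xi_k Q(q_n)\|=0$ to require re-running the contradiction argument from the end of the proof of Theorem \ref{c KP dichotomy for positive sequences}, now with Lemma \ref{l hidden 1 b} in place of Lemma \ref{l hidden 1}, taking care of the coupling between the index $k$, which fixes the projection $q_k$, and the test projection $q_n$ against which uniform integrability is measured. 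Once these estimates are assembled, Lemma \ref{l 2.3} yields the relative weak compactness of $\{\xi_k:k\in\mathbb{N}\}$ and completes the proof.
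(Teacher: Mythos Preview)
Your proposal is correct and follows essentially the same route as the paper: apply Theorem~\ref{c KP dichotomy for positive sequences} to $(|\phi_n|)$, define $\xi_k$ and $\psi_k$ via the Peirce-2 projection for $q_k$, and verify the hypotheses of Lemma~\ref{l 2.3} by using the domination inequalities of Lemma~\ref{l hidden 1 b} to transfer uniform integrability from $|\phi_{n_k}|$ to $\phi_{n_k}$, including the contradiction argument for $\lim_n\sup_k\|\xi_k Q(q_n)\|=0$. One small caveat: your reference to Proposition~\ref{p peirce 1 0 increase the measure of non-rwc} ``exactly as in the positive setting'' is not quite literal---that proposition assumes positive functionals and a faithful normal state, and in the positive case it was combined with the maximality of $\alpha$ from Theorem~\ref{t 3.1}, which you no longer have---but the paper (and your actual plan) sidesteps this by arguing directly: if $\{\xi_k(P_0(q_{k_0})+P_1(q_{k_0}))\}_{k\ge k_0}$ were not relatively weakly compact, one would extract via Proposition~\ref{p equiv rwc and uniform integrable} a strong$^*$-null $(p_n)$ with $\|\phi_{n_{\sigma(k)}}(P_0(q_{k_0})+P_1(q_{k_0}))Q(p_k)\|$ bounded below, and Lemma~\ref{l hidden 1 b} with $p=1-q_{k_0}$ then bounds this by a power of $\||\phi_{n_{\sigma(k)}}|\,Q(Q(1-q_{k_0})p_k)\|$, contradicting the uniform integrability of $\{|\phi_{n_k}|-|\phi_{n_k}|P_2(q_k)\}$ since $Q(1-q_{k_0})p_k\in M_0(q_{k_0})\subseteq M_0(q_k)$ for $k\ge k_0$.
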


\begin{proof} We may assume that $\|\phi_n\|\leq 1$, for every $n\in \mathbb{N}$.
Applying Theorem \ref{c KP dichotomy for positive sequences} to the sequence $(|\phi_n|)$, we find a subsequence $(|\phi_{n_k}|)$ and a decreasing, strong$^*$-null sequence of projections $(q_k)$ in $M$ such that the set $\{|\phi_{n_k}| - |\phi_{n_k}| P_{2} (q_k): k\in \mathbb{N}\}$ is relatively weakly compact. Set $\xi_{k} := \phi_{n_k} - \phi_{n_k} P_{2} (q_k)$ and $\psi_k := \phi_{n_k} P_{2} (q_k)$\smallskip

We claim that, for each $k_0$, the set $\{\xi_{k} (P_{0} (q_{k_0})+ P_{1} (q_{k_0})) : k\geq k_0\}$ is relatively weakly compact. Otherwise, by Proposition \ref{p equiv rwc and uniform integrable}, there exists a subsequence $(\xi_{\sigma(k)})$ with $\sigma(1) \geq k_0$, and a decreasing, strong$^*$-null sequence of projections $(p_n)$ in $M$ such that $$0<\gamma \leq \left\| \xi_{\sigma(k)} (P_{0} (q_{k_0})+ P_{1} (q_{k_0})) Q(p_k) \right\|$$
$$=\hbox{(since $\sigma(k) \geq k_0$)}= \left\| \phi_{\sigma(n_k)} (P_{0} (q_{k_0})+ P_{1} (q_{k_0})) Q(p_k) \right\|$$
$$= \left\| \phi_{\sigma(n_k)} (Q (1-q_{k_0})+ 2 Q (1-q_{k_0}, q_{k_0})) Q(p_k) Q(1)\right\|$$
$$ \leq \left\| \phi_{\sigma(n_k)} Q (1-q_{k_0})  Q(p_k) \right\| + 2 \left\| \phi_{\sigma(n_k)}Q (1-q_{k_0}, q_{k_0})) Q(p_k)\right\|$$
$$\hbox{(by Lemma \ref{l hidden 1 b})} \leq 12 \left\| |\phi_{\sigma(n_k)}| Q ( Q(1-q_{k_0})  p_k) \right\|^{\frac12} $$
$$+ 16 \left\| |\phi_{\sigma(n_k)}| Q ( Q(1-q_{k_0})  p_k) \right\|^{\frac14} \leq 28 \left\| |\phi_{\sigma(n_k)}| Q ( Q(1-q_{k_0})  p_k) \right\|^{\frac14},$$
which contradicts the fact that $\{|\phi_{n_k}| - |\phi_{n_k}| P_{2} (q_k): k\in \mathbb{N}\}$ is relatively weakly compact, because $( Q(1-q_{k_0})  p_k)$ is a strong$^*$-null sequence in $M_0(q_{k_0})$ $\subseteq M_0(q_{k}), $ for every $k\geq k_0$.\smallskip

The statement of the theorem will follow from Lemma \ref{l 2.3} once the condition $\displaystyle \lim_{n} \sup_{k} \|\xi_k Q(q_n)\| = 0$ is fulfilled. Arguing by contradiction, suppose that there exist $\theta >0$
and a subsequence $(q_{\sigma(n)})$ such that $$ \sup_{k} \|\xi_k Q(q_{\sigma(n)})\| \geq \theta, \hbox{  for every $n$.}$$

Since for each $n\geq k$, $\xi_{k} Q(q_n)= \varphi_{n_k} Q(q_n) - \varphi_{n_k} P_2 (q_k) Q(q_n) =\varphi_{n_k} Q(q_n) - \varphi_{n_k} Q(q_n) =0,$ (and hence $\displaystyle \sup_{k} \|\xi_{k} Q(q_{\sigma(n)})\| = \sup_{k>{\sigma(n)}} \|\xi_{k} Q(q_{\sigma(n)})\|$), we can find a subsequence $(\xi_{\sigma_1(k)})$ satisfying
$\|\xi_{\sigma_1(k)} Q(q_{\sigma(k)})\| \geq \theta$ and $\sigma_1(k) > \sigma(k)$, for every $k$. We therefore have: $$0<\theta \leq \|\xi_{\sigma_1(k)} Q(q_{\sigma(k)})\| = \big\| \phi_{n_{\sigma_1(k)}} Q(q_{\sigma(k)})- \phi_{n_{\sigma_1(k)}} P_{2} (q_{\sigma_1(k)}) Q(q_{\sigma(k)})\big\|$$ $$= \big\| \phi_{n_{\sigma_1(k)}} \left(Q(q_{\sigma(k)})-Q (q_{\sigma_1(k)})\right)\big\| \leq  \big\| \phi_{n_{\sigma_1(k)}} Q(q_{\sigma(k)}-q_{\sigma_1(k)})\big\| $$ $$+ 2 \big\| \phi_{n_{\sigma_1(k)}}  Q (q_{\sigma(k)}-q_{\sigma_1(k)},q_{\sigma_1(k)})\big\| \leq \hbox{(by Lemma \ref{l hidden 1 b}, with $a=1$)}$$
$$ \leq 12 \ \big\| |\phi_{n_{\sigma_1(k)}}| \ Q(q_{\sigma(k)}-q_{\sigma_1(k)})^2\big\|^{\frac12} + 16 \ \big\| |\phi_{n_{\sigma_1(k)}}| \ Q(q_{\sigma(k)}-q_{\sigma_1(k)})^2\big\|^{\frac14} $$ $$ \leq 28 \ \big\| |\phi_{n_{\sigma_1(k)}}| \ Q(q_{\sigma(k)}-q_{\sigma_1(k)})^2\big\|^{\frac14}.$$ Noticing that $P_2 (q_{\sigma_1(k)}) Q(q_{\sigma(k)}-q_{\sigma_1(k)}) =0$ (recall that $q_{\sigma(k)}-q_{\sigma_1(k)}\perp q_{\sigma_1(k)}$), we deduce that $$\big\| \left(|\phi_{n_{\sigma_1(k)}}| - |\phi_{n_{\sigma_1(k)}}| P_2 (q_{\sigma_1(k)})\right) Q(q_{\sigma(k)}-q_{\sigma_1(k)})\big\| $$ $$= \big\| |\phi_{n_{\sigma_1(k)}}| \ Q(q_{\sigma(k)}-q_{\sigma_1(k)})\big\| \geq \frac{\theta^{4}}{28^4}>0,$$ for every $k\in \mathbb{N},$ contradicting again the fact that $\{|\phi_{n_k}| - |\phi_{n_k}| P_{2} (q_k): k\in \mathbb{N}\}$ is relatively weakly compact.
\end{proof}

A standard argument allows us to deduce the following Kadec-Pelczy{\'n}ski dichotomy-type theorem with a sequence of mutually orthogonal projections in its thesis (cf. \cite[Theorem 3.9]{RandriJOT}), an sketch of the proof is rather included here for completeness reasons.

\begin{corollary}\label{c KP dichotomy orthogonal for general sequences} Let $(\phi_n)$ be a bounded sequence of functionals in the predual of a JBW$^*$-algebra $M$. Then there exist a subsequence $(\phi_{\tau(n)})$,
and a sequence of mutually orthogonal projections $(p_n)$ in $M$ such that: \begin{enumerate}[$(a)$]
\item the set $\left\{\phi_{\tau(n)} - \phi_{\tau(n)} P_{2} (p_n): n\in \mathbb{N}\right\}$ is relatively weakly compact,
\item $\phi_{\tau(n)}=\xi_n+\psi_n$, with $\xi_n := \phi_{\tau(n)} - \phi_{\tau(n)} P_{2} (p_n)$, and $\psi_n := \phi_{\tau(n)} P_{2} (p_n),$ {\rm(}$\xi_n Q(p_n)= 0$ and $\psi_n Q(p_n)^2 = \psi_n${\rm)}, for every $n$.
\end{enumerate}
\end{corollary}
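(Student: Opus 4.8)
The plan is to convert the decreasing sequence of projections furnished by Theorem \ref{t KP dichotomy for general sequences 1} into a mutually orthogonal one by replacing each projection with a suitable ``annulus'' and absorbing the resulting error into the relatively weakly compact part. First I would apply Theorem \ref{t KP dichotomy for general sequences 1} to $(\phi_n)$ to obtain a subsequence, which after relabelling we write as $(\phi_{n_k})$, together with a decreasing strong$^*$-null sequence of projections $(q_k)$ in $M$ for which the set $\{\phi_{n_k} - \phi_{n_k} P_{2}(q_k) : k\in \mathbb{N}\}$ is relatively weakly compact. The sequence of mutually orthogonal projections will be defined as the consecutive differences $p_k := q_{a_k} - q_{a_{k+1}}$ along a rapidly increasing sequence of indices $(a_k)$, and the final subsequence of functionals will be $\phi_{\tau(k)} := \phi_{n_{a_k}}$. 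Mutual orthogonality of the $p_k$ is immediate from the monotonicity of $(q_k)$: for $k<l$ one has $q_{a_l}, q_{a_{l+1}} \leq q_{a_{k+1}}$, whence $p_l \leq q_{a_{k+1}} \perp p_k$.

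The selection of the indices $(a_k)$ is an elementary forward induction exploiting a single key estimate: for each fixed normal functional $\varphi$ and each strong$^*$-null decreasing sequence of projections $(q_m)$ one has $\lim_m \|\varphi\, P_{2}(q_m)\| = 0$. This follows from Proposition \ref{p equiv rwc and uniform integrable} applied to the (trivially relatively weakly compact, hence uniformly integrable) singleton $\{\varphi\}$, together with the inequality $\|\varphi\, P_{2}(q_m)\| = \|\varphi\, Q(q_m)^2\| \leq \|\varphi\, Q(q_m)\|$. Having fixed $a_1<\cdots<a_k$, I would choose $a_{k+1}>a_k$ so large that $\||\phi_{n_{a_k}}|\, P_{2}(q_{a_{k+1}})\| < 2^{-k}$; since $|\phi_{n_{a_k}}|$ is a fixed positive normal functional this is possible by the estimate above.

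The heart of the argument is the analysis of the error $R_k := \phi_{n_{a_k}} \big(P_{2}(q_{a_k}) - P_{2}(p_k)\big)$, for then $\phi_{\tau(k)} - \phi_{\tau(k)} P_{2}(p_k) = \big(\phi_{n_{a_k}} - \phi_{n_{a_k}} P_{2}(q_{a_k})\big) + R_k$ is the sum of a member of a relatively weakly compact set and the error term. Writing $e:= p_k$ and $f:= q_{a_{k+1}}$, so that $q_{a_k} = e+f$ with $e\perp f$, the joint Peirce decomposition gives $P_{2}(e+f) - P_{2}(e) = P_{2}(f) + P_{e,f}$, where $P_{e,f}$ denotes the projection onto the off-diagonal corner $M_1(e)\cap M_1(f)$. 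The first summand contributes $\phi_{n_{a_k}} P_{2}(q_{a_{k+1}})$, already controlled by the choice of $a_{k+1}$; the corner contribution is expressed through operators of the form $Q(1-f,f)\,Q(\cdot)$ and is bounded, exactly as in the proofs of Theorem \ref{c KP dichotomy for positive sequences} and Theorem \ref{t KP dichotomy for general sequences 1}, by Lemma \ref{l hidden 1 b} in terms of $\||\phi_{n_{a_k}}|\, P_{2}(q_{a_{k+1}})\|^{\frac14}$. Thus $\|R_k\| \to 0$, which is the main obstacle I expect: getting the corner term cleanly into the form handled by Lemma \ref{l hidden 1 b} and confirming that all the cross factors are genuinely governed by the tail $\||\phi_{n_{a_k}}|\, P_{2}(q_{a_{k+1}})\|$.

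With $\|R_k\|\to 0$ in hand, the set $\{\phi_{\tau(k)} - \phi_{\tau(k)} P_{2}(p_k) : k\in \mathbb{N}\}$ is relatively weakly compact: any subsequence admits a weakly convergent sub-subsequence of the first summands (these lie in a relatively weakly compact set), while the norm-null errors $R_k$ converge weakly to $0$, so the sums converge weakly and the Eberlein-${\rm \breve{S}}$mul'jan theorem applies. This yields $(a)$. Finally $(b)$ is routine: setting $\xi_n := \phi_{\tau(n)} - \phi_{\tau(n)} P_{2}(p_n)$ and $\psi_n := \phi_{\tau(n)} P_{2}(p_n)$, the relation $P_{2}(p_n) = Q(p_n)^2$ gives $\psi_n Q(p_n)^2 = \phi_{\tau(n)} P_{2}(p_n) P_{2}(p_n) = \psi_n$, while $P_{2}(p_n) Q(p_n) = Q(p_n)$ (the range of $Q(p_n)$ lies in $M_2(p_n)$) yields $\xi_n Q(p_n) = \phi_{\tau(n)} Q(p_n) - \phi_{\tau(n)} P_{2}(p_n) Q(p_n) = 0$.
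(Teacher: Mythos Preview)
Your proposal is correct and follows essentially the same route as the paper's proof: apply Theorem~\ref{t KP dichotomy for general sequences 1}, use that $|\phi_{\sigma(n)}|(q_m)\to 0$ for fixed $n$ (so that a diagonal selection makes the errors summable), set $p_k$ to be consecutive differences of the selected $q$'s, and control the discrepancy $P_2(q_{a_k})-P_2(p_k)$ via Lemma~\ref{l hidden 1 b}. The only cosmetic difference is that the paper works directly with the $Q$-operators, writing $Q(q_{a_k})-Q(p_k)=Q(q_{a_{k+1}})+2Q(p_k,q_{a_{k+1}})$ and invoking Lemma~\ref{l hidden 1 b} with $a=1$, whereas you phrase the same decomposition in Peirce-projection language; your remark that the corner term ``is expressed through operators of the form $Q(1-f,f)\,Q(\cdot)$'' is slightly loose (it is really $2Q(e,f)$ with $e\perp f$), but the intended estimate---bounding it by a power of $\||\phi|\,Q(f)\|=|\phi|(f)$ via the Cauchy--Schwarz argument underlying Lemma~\ref{l hidden 1 b}---is exactly what the paper does.
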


\begin{proof} We may assume that $\|\phi_n\|\leq 1$, for every $n\in \mathbb{N}$. Find, by Theorem \ref{t KP dichotomy for general sequences 1}, a subsequence $(\phi_{\sigma(n)})$, bounded sequences $(\xi_n)$ and $(\psi_n)$ in $M_*$ and a decreasing strong$^*$-null sequence of projections $(q_n)$ in $M$ such that $\phi_{\sigma(n)}=\xi_n+\psi_n$, $\xi_n := \phi_{\sigma(n)} - \phi_{\sigma(n)} P_{2} (q_n)$, $\psi_n := \phi_{\sigma(n)} P_{2} (q_n),$ $\xi_n Q(q_n)= 0$, $\psi_n Q(q_n)^2 = \psi_n$ and the set $\{\xi_n: n\in \mathbb{N}\}$ is relatively weakly compact.\smallskip

Fix $n\in \mathbb{N}$. For each $m\geq n$, we have $$\lim_{m\to \infty} \| \phi_{\sigma(n)} P_{2} (q_n) - \phi_{\sigma(n)} P_{2} (q_n-q_m)\| = \lim_{m\to \infty}
\| \phi_{\sigma(n)} Q (q_n) - \phi_{\sigma(n)} Q (q_n-q_m)\|$$ $$\leq \lim_{m\to \infty} \| \phi_{\sigma(n)} Q (q_m) \| + 2 \| \phi_{\sigma(n)} Q (q_n-q_m, q_m)\| \leq \hbox{(by Lemma \ref{l hidden 1 b})}$$ $$\leq 28 \lim_{m\to \infty} \big\| |\phi_{\sigma(n)}| \ Q(q_{m})^2\big\|^{\frac14} = 28 \lim_{m\to \infty} \sqrt[4]{|\phi_{\sigma(n)}| (q_{m})}= 0. $$ We can therefore find a strictly increasing sequence $(\sigma_1(n))$ in $\mathbb{N}$ such that $$\lim_{n\to\infty} \big\| \phi_{\sigma\sigma_1(n)} Q (q_{\sigma_1(n)}) - \phi_{\sigma\sigma_1 (n)} Q (q_{\sigma_1(n)}-q_{\sigma_1(n+ 1)})\big\| =0.$$ Finally, taking $(p_n):= (q_{\sigma_1(n)}-q_{\sigma_1(n+ 1)})$, we get a sequence of mutually orthogonal projections in $M$. Since the set $\{\phi_{\sigma\sigma_1(n)} - \phi_{\sigma\sigma_1(n)} P_{2} (q_{\sigma_1(n)}): n\in \mathbb{N}\}$ is relatively weakly compact and $$\lim_{n\to\infty} \big\| \phi_{\sigma\sigma_1(n)} P_2 (q_{\sigma_1(n)}) - \phi_{\sigma\sigma_1 (n)} P_2 (p_{n})\big\| $$ $$= \lim_{n\to\infty} \big\| \phi_{\sigma\sigma_1(n)} Q (q_{\sigma_1(n)}) - \phi_{\sigma\sigma_1 (n)} Q (p_{n})\big\| =0,$$ the set $\{\phi_{\sigma\sigma_1(n)} - \phi_{\sigma\sigma_1(n)} P_{2} (p_{n}): n\in \mathbb{N}\}$ is relatively weakly compact. The proof concludes defining $(\phi_{\tau(n)}):= (\phi_{\sigma\sigma_1(n)}).$
\end{proof}

The lacking of a cone of positive in the more general setting of JBW$^*$-triple preduals makes unable the reasonings and proofs of this paper to establish a Kadec-Pelczy{\'n}ski dichotomy-type theorem for JBW$^*$-triple preduals. However, the following conjecture is natural to be posed:

\begin{conjecture} Let $(\phi_n)$ be a bounded sequence of functionals in the predual of a JBW$^*$-triple $W$. Then there exist a subsequence $(\phi_{\tau(n)})$, and a sequence of mutually orthogonal tripotents $(e_n)$ in $W$ such that: \begin{enumerate}[$(a)$]
\item the set $\left\{\phi_{\tau(n)} - \phi_{\tau(n)} P_{2} (e_n): n\in \mathbb{N}\right\}$ is relatively weakly compact,
\item $\phi_{\tau(n)}=\xi_n+\psi_n$, with $\xi_n := \phi_{\tau(n)} - \phi_{\tau(n)} P_{2} (e_n)$, and $\psi_n := \phi_{\tau(n)} P_{2} (e_n),$ for every $n$.
\end{enumerate}
\end{conjecture}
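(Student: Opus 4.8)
The plan is to deduce the statement from Theorem \ref{t KP dichotomy for general sequences 1} by a telescoping argument: the decreasing sequence of projections produced there will be turned into a mutually orthogonal one by taking consecutive differences, after passing to a sufficiently sparse subsequence and absorbing a norm-null error into the relatively weakly compact part.

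First I would normalise $\|\phi_n\|\le 1$ and apply Theorem \ref{t KP dichotomy for general sequences 1} to obtain a subsequence $(\phi_{\sigma(n)})$ and a decreasing strong$^*$-null sequence of projections $(q_n)$ such that the set $\{\phi_{\sigma(n)}-\phi_{\sigma(n)}P_2(q_n):n\in\mathbb{N}\}$ is relatively weakly compact. The candidate orthogonal projections are the telescoping differences $p_n:=q_{\sigma_1(n)}-q_{\sigma_1(n+1)}$ for a strictly increasing $\sigma_1$ to be chosen; since $(q_n)$ is decreasing (so the $q_n$ mutually operator commute) these are genuine, pairwise orthogonal projections.

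The heart of the matter is the selection of $\sigma_1$. For each fixed $n$, monotonicity and strong$^*$-nullity of $(q_m)$ give $|\phi_{\sigma(n)}|(q_m)\to 0$ as $m\to\infty$. Writing $Q(q_n)-Q(q_n-q_m)=Q(q_m)+2\,Q(q_n-q_m,q_m)$ and invoking Lemma \ref{l hidden 1 b}, the norm of $\phi_{\sigma(n)}\big(Q(q_n)-Q(q_n-q_m)\big)$ is bounded by a constant multiple of $(|\phi_{\sigma(n)}|(q_m))^{1/4}$, hence tends to $0$; since $P_2(q)=Q(q)^2$, an entirely analogous estimate yields $\lim_m\|\phi_{\sigma(n)}P_2(q_n)-\phi_{\sigma(n)}P_2(q_n-q_m)\|=0$. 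A diagonal extraction then produces $\sigma_1$ such that $\lim_n\|\phi_{\sigma\sigma_1(n)}P_2(q_{\sigma_1(n)})-\phi_{\sigma\sigma_1(n)}P_2(p_n)\|=0$.

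Finally I would put $\tau:=\sigma\circ\sigma_1$ and verify the two conclusions. The set $\{\phi_{\tau(n)}-\phi_{\tau(n)}P_2(q_{\sigma_1(n)}):n\in\mathbb{N}\}$ is relatively weakly compact, being a subsequence of the set from the first paragraph, and it differs from $\{\phi_{\tau(n)}-\phi_{\tau(n)}P_2(p_n):n\in\mathbb{N}\}$ by the norm-null sequence just constructed; since adjoining a norm-null (hence relatively norm-compact) sequence to a relatively weakly compact set preserves relative weak compactness (via Eberlein-${\rm \breve{S}}$mul'jan), conclusion $(a)$ follows, and $(b)$ is then immediate from the definitions $\xi_n=\phi_{\tau(n)}-\phi_{\tau(n)}P_2(p_n)$, $\psi_n=\phi_{\tau(n)}P_2(p_n)$ together with the Peirce relations $\xi_n Q(p_n)=0$ and $\psi_n Q(p_n)^2=\psi_n$. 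I expect the main obstacle to be the bookkeeping of the diagonal choice of $\sigma_1$ and the careful use of Lemma \ref{l hidden 1 b} to control the cross term $Q(q_n-q_m,q_m)$; the rest is routine.
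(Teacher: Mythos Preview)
The statement you are attempting to prove is a \emph{conjecture} in the paper, not a theorem; the authors explicitly say that ``the lacking of a cone of positive in the more general setting of JBW$^*$-triple preduals makes unable the reasonings and proofs of this paper'' to settle it. Your proposal does not close this gap: the very first step, invoking Theorem~\ref{t KP dichotomy for general sequences 1}, is only available when $W$ is a JBW$^*$-\emph{algebra}. A general JBW$^*$-triple need not carry a Jordan product, a unit, an involution, a positive cone, or projections; it only has tripotents and the Peirce calculus associated with them. Consequently there is no notion of ``positive normal functional'', no absolute value $|\phi|$, no decreasing sequences of projections, and none of the machinery (Lemma~\ref{l hidden 1 b}, Theorem~\ref{c KP dichotomy for positive sequences}, Theorem~\ref{t KP dichotomy for general sequences 1}) is at your disposal.

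What your argument actually accomplishes is a faithful reconstruction of the proof of Corollary~\ref{c KP dichotomy orthogonal for general sequences}, the JBW$^*$-algebra statement that precedes the conjecture: the telescoping $p_n:=q_{\sigma_1(n)}-q_{\sigma_1(n+1)}$, the control of $Q(q_n)-Q(q_n-q_m)$ via Lemma~\ref{l hidden 1 b} and the quantity $|\phi_{\sigma(n)}|(q_m)$, and the absorption of the norm-null error into the weakly compact part are exactly the steps carried out there. So within the JBW$^*$-algebra setting your plan is correct and matches the paper; but it does not touch the genuinely open case of JBW$^*$-triples, where one would first need analogues of the positive-functional decomposition, of the supremum-attaining argument in Theorem~\ref{t 3.1}, and of the order-theoretic estimates underlying Lemma~\ref{l hidden 1}, none of which are currently available.
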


\bigskip\bigskip


\begin{thebibliography}{10}

\bibitem{Ak} C.A. Akemann, The dual space of an operator
algebra, \emph{Trans. Amer. Math. Soc.} \textbf{126}, 286-302
(1967).

\bibitem{AkDoGam} C.A. Akemann, P.G. Dodds, J.L.B. Gamlen, Weak compactness in
the dual space of $C^{*} $-algebra, \emph{J. Functional Analysis}
\textbf{10}, 446-450 (1972).

\bibitem{AlfShultz03} E.M. Alfsen, F.W. Shultz, \emph{Geometry of state spaces of operator algebras}, Birkhäuser Boston, Boston, MA, 2003.

\bibitem{BrookSaWri} J.K. Brooks, K. Sait{\^o}, J.D.M. Wright, A
bounded sequence of normal functionals has a subsequence which is
nearly weakly convergent, \emph{J. Math. Anal. Appl.}
\textbf{276}, no. 1, 160--167 (2002).

\bibitem{Bun01} L.J. Bunce, Norm preserving extensions in JBW$^*$-triple, \emph{Quart. J. Math. Oxford} \textbf{52}, No.2, 133-136 (2001).

\bibitem{Die} J. Diestel, \emph{Sequences and series in Banach spaces},
Graduate Texts in Mathematics, 92. Springer-Verlag, New York, 1984.

\bibitem{FigGhouJohn} T. Figiel, N. Ghoussoub, W.B. Johnson, On the structure of nonweakly compact operators on Banach lattices, \emph{Math. Ann.} \textbf{257}, no. 3, 317-334  (1981).

\bibitem{FriRu85} Y. Friedman, B. Russo, Structure of the predual of a
JBW$^*$-triple, \emph{J. Reine Angew. Math.} \textbf{356}, 67-89 (1985).

\bibitem{HancheStor} H. Hanche-Olsen, E. St{\o }rmer,
\textit{Jordan operator algebras, Monographs and Studies in Mathematics 21}, Pitman, London-Boston-Melbourne 1984.

\bibitem{Io} Iochum, B.: \emph{C{\^o}nes autopolaires et algebres de
Jordan}, Lecture Notes in Math. 1049. Springer,
Berlin-Heidelberg-New York, 1984.

\bibitem{KadPelcz} M.I. Kadec, A. Pelczy{\' n}ski, Bases, lacunary sequences and complemented subspaces in the
spaces $L_p$, \emph{Stud. Math.} \textbf{21} 161-176 (1962).

\bibitem{Neal} M. Neal, Inner ideals and facial structure of the quasi-state space of a JB-algebra,
\emph{J. Funct. Anal.} \textbf{173}, No.2, 284-307 (2000).

\bibitem{Pe}  A.M. Peralta,, Some remarks on weak compactness
in the dual space of a JB*-triple, \emph{Tohoku Math. J.}
\textbf{58}, 149-159 (2006).

\bibitem{PeRo} A.M. Peralta, A. Rodr{\'\i}guez Palacios, Grothendieck's
inequalities for real and complex ${\rm JBW}\sp *$-triples,
\emph{Proc. London Math. Soc.} (3) \textbf{83}, no. 3, 605-625
(2001).

\bibitem{RaynaudXu} Y. Raynaud, Q. Xu,
On subspaces of non-commutative $L_{p}$-spaces,
\emph{J. Funct. Anal.} \textbf{203}, No. 1, 149-196 (2003).


\bibitem{RandriJOT} N. Randrianantoanina, Sequences in non-commutative $L_p$-spaces,
\emph{J. Operator Theory} \textbf{48}, no. 2, 255-272 (2002).

\bibitem{Randri} N. Randrianantoanina, Kadec-Pelczy{\' n}ski decomposition for Haagerup $L_p$-spaces,
\emph{Math. Camb. Philos. Soc.} \textbf{132} 137-154 (2002).


\bibitem{Sa} K. Sait{\^o}, On the preduals of $W\sp{*} $-algebras, \emph{T{\^o}hoku
Math. J.} (2) \textbf{19}, 324-331 (1967).

\bibitem{Sak} S. Sakai, \emph{C*-algebras and W*-algebras},
Springer-Verlag, Berlin 1971.

\bibitem{Tak0} M. Takesaki, On the conjugate space of operator algebra. \emph{T{\^o}hoku Math. J.} (2)
\textbf{10}, 194-203 (1958).

\bibitem{Tak} M. Takesaki, M.: \emph{Theory of operator algebras I}, Springer
Verlag, New York,  1979.

\bibitem{Top} D. Topping, \textit{Jordan algebras of self-adjoint operators},
Mem. Amer. Math. Soc. \textbf{53}, 1965.

\bibitem{Wri77} J.D.M. Wright, Jordan C*-algebras, \emph{Michigan Math. J.} \textbf{24}
291-302 (1977).
\end{thebibliography}
\end{document}